\numberwithin{equation}{section}
\newtheorem{theorem}{Theorem}[section]
\newtheorem{definition}[theorem]{Definition}
\newtheorem{lemma}[theorem]{Lemma}
\newtheorem{proposition}[theorem]{Proposition}
\newtheorem{remark}[theorem]{Remark}
\newenvironment{proof}[1][Proof]{\noindent\textbf{#1.} }{\hfill $\square$}
\begin{document}

\title{Existence and symmetry breaking of vectorial ground states for
Hartree-Fock type systems with potentials}
\date{}
\author{Juntao Sun$^{a}$, Tsung-fang Wu$^{b}$  \\
{\footnotesize $^a$\emph{School of Mathematics and Statistics, Shandong
University of Technology, Zibo, 255049, P.R. China }}\\
{\footnotesize $^{b}$\emph{Department of Applied Mathematics, National
University of Kaohsiung, Kaohsiung 811, Taiwan }}}
\maketitle

\begin{abstract}
In this paper we study the Hartree-Fock type system as follows:
\begin{equation*}
\left\{
\begin{array}{ll}
-\Delta u+V\left( x\right) u+\rho \left( x\right) \phi _{\rho ,\left(
u,v\right) }u=\left\vert u\right\vert ^{p-2}u+\beta \left\vert v\right\vert
^{\frac{p}{2}}\left\vert u\right\vert ^{\frac{p}{2}-2}u & \text{ in }\mathbb{%
R}^{3}, \\
-\Delta v+V\left( x\right) v+\rho \left( x\right) \phi _{\rho ,\left(
u,v\right) }v=\left\vert v\right\vert ^{p-2}v+\beta \left\vert u\right\vert
^{\frac{p}{2}}\left\vert v\right\vert ^{\frac{p}{2}-2}v & \text{ in }\mathbb{%
R}^{3},%
\end{array}%
\right.
\end{equation*}%
where $\phi _{\rho ,\left( u,v\right) }=\int_{\mathbb{R}^{3}}\frac{\rho
\left( y\right) \left( u^{2}(y)+v^{2}\left( y\right) \right) }{|x-y|}dy,$
the potentials $V(x),\rho (x)$ are positive continuous functions in $\mathbb{%
R}^{3},$ the parameter $\beta \in \mathbb{R}$ and $2<p<4$. Such system is
viewed as an approximation of the Coulomb system with two particles appeared
in quantum mechanics, whose main characteristic is the presence of the
double coupled terms. When $2<p<3,$ under suitable assumptions on
potentials, we shed some light on the behavior of the corresponding energy
functional on $H^{1}(\mathbb{R}^{3})\times H^{1}(\mathbb{R}^{3}),$ and prove
the existence of a global minimizer with negative energy. When $3\leq p<4,$
we find vectorial ground states by developing a new analytic method and
exploring the conditions on potentials. Finally, we study the phenomenon of
symmetry breaking of ground states when $2<p<3.$

\footnotetext{ ~\textit{E-mail addresses }: jtsun@sdut.edu.cn(J. Sun), tfwu@nuk.edu.tw (T.-F. Wu).}

\textbf{Keywords:} Hartree-Fock system; Variational methods; Vectorial ground states; Nonradial solutions.

\textbf{2010 Mathematics Subject Classification:} 35J50, 35Q40, 35Q55.
\end{abstract}

\section{Introduction}

Our starting point is the following $(M+N)$-body Schr\"{o}dinger equation:%
\begin{equation*}
i\hbar \partial _{t}\Psi =-\frac{\hbar ^{2}}{2}\sum\limits_{j=1}^{M+N}\frac{1%
}{m_{j}}\Delta \Psi +\frac{e^{2}}{8\pi \varepsilon _{0}}\sum\limits_{j,k=1,%
\text{ }j\neq k}^{M+N}\frac{Z_{j}Z_{k}}{|x_{j}-x_{k}|}\Psi ,
\end{equation*}%
where $\Psi :\mathbb{R\times R}^{3(M+N)}\rightarrow \mathbb{C}$, the
constants $eZ_{j}$'s are the charges and in particular the charge numbers $%
Z_{j}$'s are positive for the nuclei and $-1$ for the electrons. Such system
originates from a molecular system made of $M$ nuclei interacting via the
Coulomb potential with $N$ electrons.

Its complexity led to use various approximations to describe the stationary
states with simpler models. Historically, the first effort made in this
direction began from Hartree \cite{H} by choosing some particular test
functions without considering the antisymmetry (i.e. the Pauli principle).
Subsequently, Fock \cite{F1} and Slater \cite{Sl1}, to take into account the
Pauli principle, proposed another class of test functions, i.e. the class of
Slater determinants, obtaining the following Hartree-Fock type system:%
\begin{equation}
\begin{array}{ll}
-\Delta \psi _{k}+V_{\text{ext}}\psi _{k}+\left( \int_{\mathbb{R}%
^{3}}|x-y|^{-1}\sum\limits_{j=1}^{N}|\psi _{j}(y)|^{2}dy\right) \psi
_{k}+(V_{\text{ex}}\psi )_{k}=E_{k}\psi _{k}, & \text{ }\forall k=1,2,...,N,%
\end{array}
\label{1-0}
\end{equation}%
where $\psi _{k}:\mathbb{R}^{3}\rightarrow \mathbb{C},$ $V_{\text{ext}}$ is
a given external potential, $(V_{\text{ex}}\psi )_{k}$ is the $k$'th
component of the crucial exchange potential defined by%
\begin{equation*}
\begin{array}{ll}
(V_{\text{ex}}\psi )_{k}=-\sum\limits_{j=1}^{N}\psi _{j}(y)\int_{\mathbb{R}%
^{3}}\frac{\psi _{k}(y)\bar{\psi}_{j}(y)}{|x-y|}dy, & \text{ }\forall
k=1,2,...,N,%
\end{array}%
\end{equation*}%
and $E_{k}$ is the $k$'th eigenvalue.

In \cite{Sl2}, a further relevant free-electron approximation for the $k$'th
component of the exchange potential $V_{\text{ex}}\psi $ is given by
\begin{equation}
\begin{array}{ll}
(V_{\text{ex}}\psi )_{k}=-C\left( \sum\limits_{j=1}^{N}|\psi
_{j}|^{2}\right) ^{1/3}\psi _{k}, & \text{ }\forall k=1,2,...,N,%
\end{array}
\label{1-1}
\end{equation}%
where $C$ is a positive constant. Such approximation is used more frequently
when the qualitative analysis of system (\ref{1-0}) is studied.

When $N=1,$ the exchange potential (\ref{1-1}) becomes $V_{\text{ex}}\psi
=-C|\psi _{1}|^{2/3}\psi _{1}$. If we consider $\psi _{1}$ as a real
function, renaming it as $u,$ and take, for simplicity, $C=1$, then system (%
\ref{1-0}) becomes Schr\"{o}dinger--Poisson--Slater equation as follows:

\begin{equation}
\begin{array}{ll}
-\Delta u+u+\mu \phi _{u}(x)u=|u|^{2/3}u & \text{ in }\mathbb{R}^{3},%
\end{array}
\label{1-3}
\end{equation}%
where $\mu >0$ is a parameter and%
\begin{equation*}
\phi _{u}(x)=\int_{\mathbb{R}^{3}}\frac{u^{2}(y)}{|x-y|}dy.
\end{equation*}%
S\'{a}nchez and Soler \cite{SS} used a minimization procedure in an
appropriate manifold to find a positive solution of equation (\ref{1-3}). If
the nonlinearity $|u|^{2/3}u$ is replaced with $|u|^{p-2}u$ $(2<p<6)$ (or,
more generally, $f(u))$, then equation (\ref{1-3}) becomes the Schr\"{o}%
dinger--Poisson equation (also called Schr\"{o}dinger--Maxwell equation),
where the existence and multiplicity of various solutions has been
extensively studied depending on the parameter $\mu $, see e.g. \cite%
{A,AP,R1,R2,SWF,SWF1,SWF2,ZZ}. More precisely, Ruiz \cite{R1} considered the
Schr\"{o}dinger--Poisson equation with the nonlinearity $|u|^{p-2}u$ $%
(2<p<6),$ and concluded that if $2<p\leq 3,$ one positive radial solutions
is found for $\mu >0$ sufficiently small and no nontrivial solution admits
for $\mu \geq 1/4$ while if $3<p<6,$ one positive radial solution admits for
all $\mu >0.$

Comparing with the case of $N=1,$ Hartree-Fock type system (\ref{1-0}) is
much more complicated when $N=2$. In order to further simplify it, we assume
the exchange potential%
\begin{equation}
V_{\text{ex}}\psi =-C\binom{(\left\vert \psi _{1}\right\vert ^{p-2}+\beta
\left\vert \psi _{1}\right\vert ^{\frac{p}{2}-2}\left\vert \psi
_{2}\right\vert ^{\frac{p}{2}})\psi _{1}}{(\left\vert \psi _{2}\right\vert
^{p-2}+\beta \left\vert \psi _{1}\right\vert ^{\frac{p}{2}}\left\vert \psi
_{2}\right\vert ^{\frac{p}{2}-2})\psi _{2}},  \label{1-4}
\end{equation}%
where $\beta \in \mathbb{R}$ and $2<p<6$. Clearly, if $p=\frac{8}{3},$ then (%
\ref{1-4}) is written as%
\begin{equation*}
V_{\text{ex}}\psi =-C\binom{(\left\vert \psi _{1}\right\vert ^{\frac{2}{3}%
}+\beta \left\vert \psi _{1}\right\vert ^{-\frac{2}{3}}\left\vert \psi
_{2}\right\vert ^{\frac{4}{3}})\psi _{1}}{(\left\vert \psi _{2}\right\vert ^{%
\frac{2}{3}}+\beta \left\vert \psi _{1}\right\vert ^{\frac{4}{3}}\left\vert
\psi _{2}\right\vert ^{-\frac{2}{3}})\psi _{2}},
\end{equation*}%
which can be viewed as an approximation of the exchange potential (\ref{1-1}%
) proposed by Slater. Considering $\psi _{1}$ and $\psi _{2}$ real
functions, renaming them as $u,v,$ and taking, for simplicity, $C=1$, system
(\ref{1-0}) becomes the following autonomous Hartree-Fock type system:%
\begin{equation}
\left\{
\begin{array}{ll}
-\Delta u+u+\mu \phi _{u,v}u=\left\vert u\right\vert ^{p-2}u+\beta
\left\vert v\right\vert ^{\frac{p}{2}}\left\vert u\right\vert ^{\frac{p}{2}%
-2}u & \text{ in }\mathbb{R}^{3}, \\
-\Delta v+v+\mu \phi _{u,v}v=\left\vert v\right\vert ^{p-2}v+\beta
\left\vert u\right\vert ^{\frac{p}{2}}\left\vert v\right\vert ^{\frac{p}{2}%
-2}v & \text{ in }\mathbb{R}^{3},%
\end{array}%
\right.  \label{1-2}
\end{equation}%
where $\mu \in \mathbb{R}$ and $\phi _{u,v}(x)=\int_{\mathbb{R}^{3}}\frac{%
u^{2}(y)+v^{2}\left( y\right) }{|x-y|}dy.$ For a solution $\left( u,v\right)
$ of system (\ref{1-2}), we introduce some concepts of its triviality and
positiveness.

\begin{definition}
\label{D1}A vector function $\left( u,v\right) $ is said to be\newline
$\left( i\right) $ nontrivial if either $u\neq 0$ or $v\neq 0;$\newline
$\left( ii\right) $ semitrivial if it is nontrivial but either $u=0$ or $%
v=0; $\newline
$\left( iii\right) $ vectorial if both of $u$ and $v$ are not zero;\newline
$\left( iv\right) $ nonnegative if $u\geq 0$ and $v\geq 0;$\newline
$\left( v\right) $ positive if $u>0$ and $v>0.$
\end{definition}

If $\mu =0,$ then system (\ref{1-2}) is deduced to the local weakly coupled
nonlinear Schr\"{o}dinger system%
\begin{equation}
\left\{
\begin{array}{ll}
-\Delta u+u=\left\vert u\right\vert ^{p-2}u+\beta \left\vert v\right\vert ^{%
\frac{p}{2}}\left\vert u\right\vert ^{\frac{p}{2}-2}u & \text{ in }\mathbb{R}%
^{3}, \\
-\Delta v+v=\left\vert v\right\vert ^{p-2}v+\beta \left\vert u\right\vert ^{%
\frac{p}{2}}\left\vert v\right\vert ^{\frac{p}{2}-2}v & \text{ in }\mathbb{R}%
^{3},%
\end{array}%
\right.  \label{1-5}
\end{equation}%
which arises in various physical phenomena, such as the occurrence of
phase-separation in Bose-Einstein condensates with multiple states, or the
propagation of mutually incoherent wave packets in nonlinear optics, see
e.g. \cite{AA,EGBB,F,M,T1}. The parameter $\beta $ is the interaction
between the two components. the positive sign of $\beta $ stays for
attractive interaction, while the negative sign stays for repulsive
interaction. In recent years, the existence and multiplicity of positive
vectorial solutions for system (\ref{1-5}) have been extensively studied,
depending on the range of $\beta $, see e.g. \cite{AC,BDW,CZ2,LW,LW1,MMP,S0}%
. Furthermore, for the study of nonlinear Schr\"{o}dinger systems with an
external potential $V(x),$ we refer the reader to \cite{LW2,LL,WWZ}.

If $\mu >0,$ then the typical characteristic of system (\ref{1-2}) lies on
the presence of the double coupled terms, including a Coulomb interacting
one and a cooperative pure power one. Very recently, d'Avenia, Maia and
Siciliano \cite{DMS} firstly studied the existence of radial vectorial
solutions to system (\ref{1-2}) when either $3<p<4$ and $\beta >0,$ or $%
4\leq p<6$ and $\beta \geq 2^{2q-1}-1.$ The proof is based on the method of
Nehari-Pohozaev manifold defined in $\mathbf{H}_{r}:=H_{rad}^{1}(\mathbb{R}%
^{3})\times H_{rad}^{1}(\mathbb{R}^{3})$ developed by Ruiz \cite{R1}. Later,
we \cite{SW} considered another interesting case, i.e. $2<p<3,$ and proved
the existence of radial vectorial solutions for $\left( HF_{\beta }\right) $
when either $\mu >0$ small enough or $\beta >\beta (\mu ).$ In addition, for
$\beta >\beta (\mu ),$ we also obtained vectorial ground states in $\mathbf{H%
}:=H^{1}(\mathbb{R}^{3})\times H^{1}(\mathbb{R}^{3})$ when either $3\leq p<4$
and $0<\mu <\mu _{0},$ or $\frac{1+\sqrt{73}}{3}\leq p<4$ and $\mu >0.$

In this paper we study the existence of vectorial solutions for the
Hartree-Fock type system with potentials

\begin{equation}
\left\{
\begin{array}{ll}
-\Delta u+V\left( x\right) u+\rho \left( x\right) \phi _{\rho ,\left(
u,v\right) }u=\left\vert u\right\vert ^{p-2}u+\beta \left\vert v\right\vert
^{\frac{p}{2}}\left\vert u\right\vert ^{\frac{p}{2}-2}u & \text{ in }\mathbb{%
R}^{3}, \\
-\Delta v+V\left( x\right) v+\rho \left( x\right) \phi _{\rho ,\left(
u,v\right) }v=\left\vert v\right\vert ^{p-2}v+\beta \left\vert u\right\vert
^{\frac{p}{2}}\left\vert v\right\vert ^{\frac{p}{2}-2}v & \text{ in }\mathbb{%
R}^{3},%
\end{array}%
\right.  \tag*{$\left( HF_{\beta }\right) $}
\end{equation}%
where $\beta \in \mathbb{R}$, $2<p<6$ and%
\begin{equation*}
\phi _{\rho ,\left( u,v\right) }(x)=\int_{\mathbb{R}^{3}}\frac{\rho \left(
x\right) (u^{2}(y)+v^{2}\left( y\right) )}{|x-y|}dy.
\end{equation*}%
In addition, $V(x)$ and $\rho (x)$ represents an external potential and a
charge potential, respectively, which satisfy the following assumptions:%
\newline
$(D1)$ $V\in C(\mathbb{R}^{3},\mathbb{R})$ with $\lambda :=\inf_{x\in
\mathbb{R}^{3}}V\left( x\right) >0$ and $\lim\limits_{|x|\rightarrow \infty
}V(x)=V_{\infty }>0;$\newline
$(D2)$ $\rho \in C(\mathbb{R}^{3},\mathbb{R})$ with $\rho _{\min
}:=\inf_{x\in \mathbb{R}^{3}}\rho \left( x\right) >0$ on $\mathbb{R}^{3}$
and $\lim\limits_{|x|\rightarrow \infty }\rho (x)=\rho _{\infty }>0.$

Our approach is variational, and we look for solutions of system $\left(
HF_{\beta }\right) $ as critical points of the associated energy functional $%
J_{\beta }:\mathbf{H}\rightarrow \mathbb{R}$ defined by%
\begin{equation*}
J_{\beta }(u,v)=\frac{1}{2}\left\Vert \left( u,v\right) \right\Vert ^{2}+%
\frac{1}{4}\int_{\mathbb{R}^{3}}\rho \left( x\right) \phi _{\rho ,\left(
u,v\right) }\left( u^{2}+v^{2}\right) dx-\frac{1}{p}\int_{\mathbb{R}%
^{3}}\left( \left\vert u\right\vert ^{p}+\left\vert v\right\vert ^{p}+2\beta
\left\vert u\right\vert ^{\frac{p}{2}}\left\vert v\right\vert ^{\frac{p}{2}%
}\right) dx,
\end{equation*}%
where $\left\Vert \left( u,v\right) \right\Vert =\left[ \int_{\mathbb{R}%
^{3}}(|\nabla u|^{2}+|\nabla v|^{2}+V(x)u^{2}+V(x)v^{2})dx\right] ^{1/2}$ is
the standard norm in $\mathbf{H}.$ Clearly, $J_{\beta }$ is a well-defined
and $C^{1}$ functional on $\mathbf{H}.$

Observe that the case of $4\leq p<6$ is trivial, and we only consider the
case of $2<p<4$ in the whole paper.

We point out that the geometric properties of $J_{\beta }$ have not been
studied in detail in existing papers \cite{DMS,SW}. One of the motivations
of this paper is to shed some light on the behavior of $J_{\beta }$ on $%
\mathbf{H}$ when $2<p<3$. As we shall see, under conditions $(D1)-(D2),$
system $\left( HF_{\beta }\right) $ does not admits any nontrivial solutions
for $\beta $ sufficiently small, even positive, while $J_{\beta }$ is not
bounded from below on $\mathbf{H}\ $for $\beta >0$ sufficiently large.
However, under additional assumptions on $V,\rho $ (condition $(D3)$ below),
$J_{\beta }$ is bounded from below on $\mathbf{H}$ when $\beta $ lies in a
finite interval\textbf{,} and we prove the existence of a global minimizer
with negative energy, which seems to be an interesting result. In addition,
we also find that the existence of vectorial solutions is subject to the
parameter $\beta $, regardless of size of the charge potential, which is
totally different from the Schr\"{o}dinger--Poisson equation, e.g. \cite{R1}.

For the autonomous system (\ref{1-2}) with $3\leq p<4$, we have concluded
that the existence of vectorial ground states depends on both $\mu >0$
sufficiently small and $\beta >0$ sufficiently large in \cite{SW}. For the
nonautonomous system $\left( HF_{\beta }\right) $ with $3\leq p<4$, in this
paper we are likewise interested in finding vectorial ground states by
developing a new analytic method and exploring the conditions on $V,\rho .$
However, unlike the autonomous system, we shall focus on relaxing the
restriction on the parameters, which is the second aim of our paper.

Note that nonradial ground states have been found in other frameworks
previously, such as in the Schr\"{o}dinger--Poisson equation, see \cite%
{R2,Wu}, where the Coulomb interacting term need to be controlled. However,
to the best of our knowledge, there seems no any related result for the
Hartree-Fock type system like $\left( HF_{\beta }\right) $. In view of this,
the last object of this paper is to study the existence of nonradial ground
states for a class of Hartree-Fock type systems with potentials. As we shall
see, symmetry breaking phenomenon could occur when the cooperative pure
power term is controlled, not the Coulomb interacting one.

\subsection{Main results}

First of all, we consider the following minimization problem:%
\begin{equation}
\Lambda \left( \theta ,k\right) :=\inf_{u\in \mathbf{H}_{r}\setminus \left\{
\left( 0,0\right) \right\} }\frac{\frac{1}{2}\left\Vert \left( u,v\right)
\right\Vert _{\theta }^{2}+\frac{k}{4}\int_{\mathbb{R}^{3}}\phi _{k,\left(
u,v\right) }\left( u^{2}+v^{2}\right) dx-\frac{1}{p}\int_{\mathbb{R}%
^{3}}(\left\vert u\right\vert ^{p}+\left\vert v\right\vert ^{p})dx}{\frac{2}{%
p}\int_{\mathbb{R}^{3}}\left\vert u\right\vert ^{\frac{^{p}}{2}}\left\vert
v\right\vert ^{\frac{p}{2}}dx}.  \label{2-17}
\end{equation}%
where $\theta ,k>0$ are constants and $\left\Vert \left( u,v\right)
\right\Vert _{\theta }^{2}=\int_{\mathbb{R}^{3}}\left( |\nabla u|^{2}+\theta
u^{2}+|\nabla v|^{2}+\theta v^{2}\right) dx.$ Then we have the following
proposition.

\begin{proposition}
\label{p1}Assume that $2<p<3$ and $\theta ,k$ are positive constants. Then
we have\newline
$(i)$ $\Lambda \left( \theta ,k\right) \geq \frac{p}{2}\left( \frac{\theta }{%
3-p}\right) ^{3-p}\left( \frac{k}{p-2}\right) ^{p-2}-1;$\newline
$(ii)$ $\Lambda \left( \theta ,k\right) $ is achieved, i.e. there exists $%
\left( u_{0},v_{0}\right) \in \mathbf{H}_{r}\setminus \left\{ \left(
0,0\right) \right\} $ such that
\begin{equation*}
\Lambda \left( \theta ,k\right) =\frac{\frac{1}{2}\left\Vert \left(
u_{0},v_{0}\right) \right\Vert _{\theta }^{2}+\frac{k}{4}\int_{\mathbb{R}%
^{3}}\phi _{k,\left( u_{0},v_{0}\right) }\left( u_{0}^{2}+v_{0}^{2}\right)
dx-\frac{1}{p}\int_{\mathbb{R}^{3}}(\left\vert u_{0}\right\vert
^{p}+\left\vert v_{0}\right\vert ^{p})dx}{\frac{2}{p}\int_{\mathbb{R}%
^{3}}\left\vert u_{0}\right\vert ^{\frac{^{p}}{2}}\left\vert
v_{0}\right\vert ^{\frac{p}{2}}dx}>0.
\end{equation*}
\end{proposition}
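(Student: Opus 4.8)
The plan is to prove the two parts separately, starting with the quantitative lower bound in $(i)$ and then using it together with a careful compactness argument to establish that the infimum in $(ii)$ is attained.

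For part $(i)$, I would argue pointwise-in-the-ratio. Fix $(u,v)\in\mathbf{H}_r\setminus\{(0,0)\}$ with $\int_{\mathbb{R}^3}|u|^{p/2}|v|^{p/2}dx>0$ (otherwise the quotient is $+\infty$ and there is nothing to prove). The key observation is that the Coulomb term $\frac{k}{4}\int\phi_{k,(u,v)}(u^2+v^2)dx\ge 0$ is always nonnegative, so it suffices to bound from below the simpler quotient in which that term is dropped, namely
\begin{equation*}
\frac{\frac12\|(u,v)\|_\theta^2-\frac1p\int_{\mathbb{R}^3}(|u|^p+|v|^p)dx}{\frac2p\int_{\mathbb{R}^3}|u|^{p/2}|v|^{p/2}dx}.
\end{equation*}
I would split $\frac12\|(u,v)\|_\theta^2=\frac12\int|\nabla u|^2+\frac\theta2\int u^2+(\text{same for }v)$ and then, for each component, interpolate: by Gagliardo--Nirenberg in $\mathbb{R}^3$ with exponent $p\in(2,3)$ one controls $\int|u|^p$ by a product of $\|\nabla u\|_2$ and $\|u\|_2$ raised to suitable powers whose sum corresponds to $p$. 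Using Young's inequality to split the gradient and $L^2$ contributions with the exact weights $\tfrac12$ and $\tfrac\theta2$ and optimizing the constant produces a bound of the form $\frac1p\int|u|^p\le \frac12\int|\nabla u|^2+\frac\theta2\int u^2+C(\theta,p)\|u\|_2^{?}$ — but the cleanest route, and the one matching the stated constant $\frac p2(\frac{\theta}{3-p})^{3-p}(\frac{k}{p-2})^{p-2}-1$, is probably to reinstate the Coulomb term and use the interpolation $\int_{\mathbb{R}^3}|u|^p\le \big(\int u^2\big)^{(3-p)/?}\big(\text{Coulomb-type quantity}\big)^{?}$ that appears in the Schrödinger--Poisson literature (Ruiz's splitting), since the exponents $3-p$ and $p-2$ add to $1$ and the weights $\theta$ and $k$ match the two lower-order terms in the numerator. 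Either way, this yields $\frac12\|(u,v)\|_\theta^2+\frac k4\int\phi_{k,(u,v)}(u^2+v^2)-\frac1p\int(|u|^p+|v|^p)\ge -\frac p2(\frac{\theta}{3-p})^{3-p}(\frac{k}{p-2})^{p-2}\int(u^2+v^2)$ or an analogous bound against $\int|u|^{p/2}|v|^{p/2}$, and dividing through (using $2\int|u|^{p/2}|v|^{p/2}\le\int(|u|^p+|v|^p)$ where needed) gives exactly the claimed inequality with the extra $-1$.

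For part $(ii)$, I would take a minimizing sequence $\{(u_n,v_n)\}\subset\mathbf{H}_r$, and first normalize it: since the quotient is invariant under the scaling $(u,v)\mapsto(tu,tv)$ only partially (the three terms in the numerator scale as $t^2$, $t^4$, $t^p$ and the denominator as $t^p$), I would instead normalize by $\int_{\mathbb{R}^3}|u_n|^{p/2}|v_n|^{p/2}dx=1$. Then the numerator stays bounded along the sequence, and the lower bound from $(i)$ (or a direct coercivity estimate, using that $p<3<4$ so the $t^p$ term is dominated by the $t^2$ and $t^4$ terms on the relevant scale) forces $\{(u_n,v_n)\}$ to be bounded in $\mathbf{H}_r$. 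Passing to a subsequence, $(u_n,v_n)\rightharpoonup(u_0,v_0)$ weakly in $\mathbf{H}_r$; by the compact embedding $H^1_{rad}(\mathbb{R}^3)\hookrightarrow L^q(\mathbb{R}^3)$ for $2<q<6$ (Strauss lemma) we get strong convergence in $L^p$ and $L^{p/2}\times L^{p/2}$ senses, hence $\int|u_0|^{p/2}|v_0|^{p/2}=1$, so in particular $(u_0,v_0)\ne(0,0)$. Weak lower semicontinuity of $\|\cdot\|_\theta^2$ and Fatou (or weak-$L^2$ continuity together with the boundedness of the Coulomb bilinear form, which is continuous for the radial $H^1$ convergence by a standard Hardy--Littlewood--Sobolev plus compactness argument) handle the remaining terms: the numerator evaluated at $(u_0,v_0)$ is $\le\liminf$ of the numerators, while the denominator converges to $1$. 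Hence $(u_0,v_0)$ attains the infimum. Finally $\Lambda(\theta,k)>0$ follows because any nonzero $(u_0,v_0)$ can be rescaled by a small $t$: the numerator at $(tu_0,tv_0)$ behaves like $\frac{t^2}2\|(u_0,v_0)\|_\theta^2+O(t^p)>0$ for small $t$ (as $p>2$), and dividing by the positive $t$-scaled denominator shows the quotient is strictly positive somewhere, so the infimum, being achieved, is $>0$ — one must just check that it cannot be zero at the minimizer, which follows from the same small-$t$ expansion applied to the minimizer itself.

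The main obstacle I anticipate is the compactness of the Coulomb term $\int_{\mathbb{R}^3}\phi_{k,(u_n,v_n)}(u_n^2+v_n^2)dx$ under weak convergence. Boundedness in $\mathbf{H}_r$ plus the radial compact embedding gives strong convergence of $u_n^2+v_n^2$ in $L^{6/5}(\mathbb{R}^3)$ (since $L^{12/5}\hookleftarrow H^1_{rad}$ compactly for that Lebesgue exponent range), and the Coulomb energy is continuous on $L^{6/5}$ by the Hardy--Littlewood--Sobolev inequality; so this reduces to a routine check, but it is the one place where the radial setting is genuinely used and where one must be careful that $6/5$ (equivalently the exponent $12/5\in(2,6)$) is in the admissible range for Strauss compactness — it is. Getting the sharp constant in $(i)$ with the precise powers $3-p$ and $p-2$ is the other delicate point and likely requires the exact optimization in the Schrödinger--Poisson-type interpolation rather than a crude Gagliardo--Nirenberg bound.
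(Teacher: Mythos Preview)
Your outline for part $(i)$ eventually lands on the right idea: the constant involving both $\theta$ and $k$ with exponents $3-p$ and $p-2$ cannot come from dropping the Coulomb term and using Gagliardo--Nirenberg alone. It requires the Lions-type inequality
\[
\tfrac{k}{2}\int_{\mathbb{R}^3}(|u|^3+v^2|u|)\,dx \le \tfrac12\int_{\mathbb{R}^3}|\nabla u|^2\,dx + \tfrac{k}{8}\int_{\mathbb{R}^3}\phi_{k,(u,v)}(u^2+v^2)\,dx
\]
(obtained by testing $-\Delta\phi_{k,(u,v)}=4\pi k(u^2+v^2)$ against $|u|$ and applying Young), which converts the nonlocal term into a local $|u|^3$ contribution. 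The paper does exactly this, then optimizes the pointwise function $\frac{\theta}{2}s^2+\frac{k}{2}s^3$ against $s^p$ and finishes with $2\int|u|^{p/2}|v|^{p/2}\le\int(|u|^p+|v|^p)$ for the denominator. Your ``Ruiz splitting'' remark is pointing at this, but your first route (drop the Coulomb term) would not yield the stated constant.

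Part $(ii)$ has a genuine gap: the boundedness of the minimizing sequence is \emph{the} substantive step, and your argument for it does not work. Normalizing $\int|u_n|^{p/2}|v_n|^{p/2}=1$ makes the numerator bounded, but the numerator is essentially the Schr\"odinger--Poisson energy, and for $2<p<3$ that energy is \emph{not} obviously coercive on $\mathbf{H}_r$. Boundedness from below (Ruiz) is known, but coercivity does not follow from the fibering remark that ``$t^p$ is dominated by $t^2$ and $t^4$'', which only concerns behaviour along rays $t\mapsto(tu,tv)$ for fixed $(u,v)$, not arbitrary sequences. The paper proves boundedness by contradiction: it uses the Lions trick to get
\[
\widetilde J(u_n,v_n)\ge \tfrac14\|(u_n,v_n)\|_\theta^2 - |m_{\theta,k}|\,\bigl(|D_n^{(1)}|+|D_n^{(2)}|\bigr),
\]
where $D_n^{(i)}$ are the sublevel sets on which the pointwise integrand is negative, and then combines Strauss's radial decay $|u(x)|\le c_0|x|^{-1}\|u\|_{H^1}$ with a direct lower bound on the Coulomb energy restricted to $D_n^{(i)}\times D_n^{(j)}$ to force $|D_n^{(1)}|+|D_n^{(2)}|$ bounded, a contradiction. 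This Ruiz-style argument is the heart of the proof and cannot be replaced by a one-line appeal to coercivity. Your discussion of the Coulomb term's convergence via Strauss compactness in $L^{12/5}$ is correct and matches the paper, but that is not where the difficulty lies.

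A smaller issue: your argument for $\Lambda(\theta,k)>0$ is circular. Showing that the quotient at $(tu_0,tv_0)$ is positive for small $t$ only gives an \emph{upper} bound $\Lambda\le Q(tu_0,tv_0)$, not $\Lambda>0$; the infimum could still be nonpositive at the minimizer itself.
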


Then we have the following results.

\begin{theorem}
\label{t1-2}Assume that $2<p<3$ and conditions $\left( D1\right) -\left(
D2\right) $ hold. Then we have the following statements.\newline
$\left( i\right) $ there exists $\overline{\Lambda }_{0}\geq 2^{\frac{p-2}{2}%
}\left( \frac{\lambda }{3-p}\right) ^{3-p}\left( \frac{\rho _{\min }}{p-2}%
\right) ^{p-2}-1$ such that for every $\beta \in \left( -\infty ,\overline{%
\Lambda }_{0}\right) ,$ system $(HF_{\beta })$ does not admits any
nontrivial solutions;\newline
$\left( ii\right) $ for every $\beta >\Lambda \left( V_{\max },\rho _{\max
}\right) ,$ there holds $\inf_{\left( u,v\right) \in \mathbf{H}}J_{\beta
}(u,v)=-\infty ,$ where $V_{\max }:=\sup_{x\in \mathbb{R}^{3}}V\left(
x\right) $ and $\rho _{\max }:=\sup_{x\in \mathbb{R}^{3}}\rho \left(
x\right) .$
\end{theorem}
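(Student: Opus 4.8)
The plan is to handle the two statements separately, each by a Pohozaev/Nehari-type energy estimate.

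For part $(i)$, suppose $(u,v)$ is a nontrivial solution of $(HF_\beta)$. Testing the first equation with $u$ and the second with $v$ and adding, I get the Nehari identity
\begin{equation*}
\|(u,v)\|^2 + \int_{\mathbb{R}^3}\rho\,\phi_{\rho,(u,v)}(u^2+v^2)\,dx = \int_{\mathbb{R}^3}\left(|u|^p+|v|^p+2\beta|u|^{p/2}|v|^{p/2}\right)dx.
\end{equation*}
Here I would use $(D1)$–$(D2)$ to bound $\|(u,v)\|^2 \ge \|(u,v)\|_\lambda^2$ and $\int \rho\,\phi_{\rho,(u,v)}(u^2+v^2)\ge \rho_{\min}\int\phi_{\rho_{\min},(u,v)}(u^2+v^2)$, roughly speaking replacing the variable potentials by their infima. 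If $\beta \ge \overline{\Lambda}_0$ is allowed to be arbitrarily large there is of course no contradiction, so the point is: for $\beta$ below the threshold, the right-hand side is dominated by the left. The natural way to make this precise is to invoke Proposition \ref{p1}(i): after symmetrizing (or just by the definition of $\Lambda(\theta,k)$ restricted to the orbit of $(u,v)$, noting $u,v$ need not be radial — here I would either pass to radial rearrangements, which decrease the gradient and the Coulomb term and the $L^p$ norms appropriately, or re-derive the elementary pointwise inequality behind Proposition \ref{p1}(i) directly without radiality), one obtains
\begin{equation*}
\frac{2\beta}{p}\int|u|^{p/2}|v|^{p/2}\,dx \ge \left(\Lambda(\lambda,\rho_{\min})+1-1\right)\cdot(\text{something})\ \ge\ \left(2^{(p-2)/2}\Big(\tfrac{\lambda}{3-p}\Big)^{3-p}\Big(\tfrac{\rho_{\min}}{p-2}\Big)^{p-2}-1\right)\frac{2}{p}\int|u|^{p/2}|v|^{p/2}\,dx,
\end{equation*}
which forces $\beta \ge \overline{\Lambda}_0$ for some $\overline{\Lambda}_0$ no smaller than the stated quantity, unless $\int|u|^{p/2}|v|^{p/2}=0$. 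In the latter (semitrivial) case one reduces to a single scalar Schrödinger–Poisson–Slater equation with subcritical $2<p<3$ and a genuine Coulomb term, for which the same elementary inequality (the $3-p$ and $p-2$ exponents come from Young's inequality splitting $\|\nabla u\|^2$, the Coulomb energy and $\|u\|_p^p$) shows no nontrivial solution exists. So I would define $\overline{\Lambda}_0$ as the supremum of the $\beta$ for which nonexistence holds and check it majorizes the displayed bound. The main obstacle here is the radiality gap between Proposition \ref{p1} (stated on $\mathbf{H}_r$) and an arbitrary solution in $\mathbf{H}$; I expect to close it by rearrangement or by isolating the underlying pointwise interpolation inequality.

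For part $(ii)$, I want to show $J_\beta$ is unbounded below when $\beta>\Lambda(V_{\max},\rho_{\max})$. By the definition \eqref{2-17} of $\Lambda(V_{\max},\rho_{\max})$, there is $(u_0,v_0)\in\mathbf{H}_r\setminus\{(0,0)\}$ (one may take the minimizer from Proposition \ref{p1}(ii), or just a near-minimizer) with
\begin{equation*}
\frac{1}{2}\|(u_0,v_0)\|_{V_{\max}}^2+\frac{\rho_{\max}}{4}\int\phi_{\rho_{\max},(u_0,v_0)}(u_0^2+v_0^2)\,dx-\frac{1}{p}\int(|u_0|^p+|v_0|^p)\,dx < \beta\cdot\frac{2}{p}\int|u_0|^{p/2}|v_0|^{p/2}\,dx.
\end{equation*}
Now insert the scaling $(u_t,v_t):=(t u_0(\cdot), t v_0(\cdot))$ (pure amplitude scaling, no dilation) into $J_\beta$. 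Using $V(x)\le V_{\max}$ and $\rho(x)\le\rho_{\max}$ pointwise gives
\begin{equation*}
J_\beta(u_t,v_t) \le \frac{t^2}{2}\|(u_0,v_0)\|_{V_{\max}}^2 + \frac{t^4\rho_{\max}}{4}\int\phi_{\rho_{\max},(u_0,v_0)}(u_0^2+v_0^2)\,dx - \frac{t^p}{p}\int\big(|u_0|^p+|v_0|^p+2\beta|u_0|^{p/2}|v_0|^{p/2}\big)\,dx.
\end{equation*}
Here is the delicate point: the quadratic and quartic terms grow slower and faster, respectively, than $t^p$ with $2<p<4$, so a crude amplitude scaling does not by itself send $J_\beta\to-\infty$; the quartic Coulomb term dominates. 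I therefore also need a dilation: replace $u_0$ by $u_0^\sigma(x)=\sigma^a u_0(\sigma x)$ and optimize the two parameters $(t,\sigma)$, exploiting that under $x\mapsto\sigma x$ the Coulomb energy scales with one fewer power of $\sigma$ than the $L^2$ and $L^p$ terms (the kernel $|x-y|^{-1}$ is homogeneous of degree $-1$), which is exactly the mechanism behind the $3-p$, $p-2$ exponents and behind Ruiz's analysis for Schrödinger–Poisson. Choosing the scaling that makes the Coulomb term subordinate and then letting $t\to\infty$ along the resulting curve, the chosen definition of $\Lambda(V_{\max},\rho_{\max})$ guarantees the bracket multiplying the dominant power is strictly negative, hence $J_\beta\to-\infty$. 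The main obstacle, and the step I would spend the most care on, is getting the two-parameter scaling right so that (a) all three variable-potential terms are correctly dominated by their $V_{\max},\rho_{\max}$ counterparts and (b) the combination of the quartic Coulomb term with $2<p<4$ still diverges to $-\infty$ once $\beta$ exceeds the threshold; this is precisely where the quantity $\Lambda(V_{\max},\rho_{\max})$ as defined in \eqref{2-17} — with its built-in balance of norm, Coulomb and $L^p$ energies — is tailored to make the argument close.
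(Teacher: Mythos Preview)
Your approach to part $(i)$ is essentially the paper's, with one unnecessary worry. The paper defines $\overline{\Lambda}_0$ as an infimum over all of $\mathbf{H}\setminus\{(0,0)\}$, not $\mathbf{H}_r$, and the Lions-type estimate behind the lower bound (namely $\sqrt{2}\rho_{\min}\int(|u|^3+v^2|u|)\,dx\le \int|\nabla u|^2\,dx+\tfrac{\rho_{\min}}{2}\int\phi_{\rho_{\min},(u,v)}(u^2+v^2)\,dx$, obtained from $-\Delta\phi_{\rho_{\min},(u,v)}=4\pi\rho_{\min}(u^2+v^2)$ and Cauchy--Schwarz) holds for arbitrary $(u,v)\in\mathbf{H}$. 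So there is no radiality gap to close; your instinct to ``isolate the underlying pointwise interpolation inequality'' is exactly what the paper does, and rearrangement is not needed.

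For part $(ii)$ there is a genuine gap. Your two-parameter scaling $(t\,u_0(\sigma\cdot),t\,v_0(\sigma\cdot))$ cannot drive $J_\beta$ to $-\infty$ when $2<p<3$. Under this scaling the mass, Coulomb and $L^p$ terms behave like $t^2\sigma^{-3}$, $t^4\sigma^{-5}$ and $t^p\sigma^{-3}$ respectively; making the $L^p$ term dominate both the mass and the Coulomb requires $t^{4-p}\ll\sigma^2\ll t^{p-2}$, which is empty for $p<3$. More conceptually: amplitude/dilation scalings of a radial pair stay radial, and (as the paper records in Lemma~\ref{L5.2}(i), following Ruiz) the constant-coefficient functional $J_{\beta,V_{\max},\rho_{\max}}$ is bounded below on $\mathbf{H}_r$ for $2<p<3$. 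So unboundedness must come from non-radial configurations. The paper instead invokes Ruiz's multibump construction: once one has a single pair $(\widehat u_0,\widehat v_0)$ with $J_{\beta,V_{\max},\rho_{\max}}(\widehat u_0,\widehat v_0)<0$ (guaranteed by $\beta>\Lambda(V_{\max},\rho_{\max})$ and Proposition~\ref{p1}), one places $N$ truncated translates at mutual distances $\sim N^3$; each bump contributes the fixed negative energy, while the cross Coulomb interactions are $O(N^2/N^3)=O(N^{-1})$, so the total energy is at most $N\,J_{\beta,V_{\max},\rho_{\max}}(\widehat u_0,\widehat v_0)+O(1)\to-\infty$. The comparison $J_\beta\le J_{\beta,V_{\max},\rho_{\max}}$ then finishes. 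Replacing your scaling argument by this multibump step would repair the proof.
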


\begin{theorem}
\label{t1}Assume that $2<p<3$ and conditions $\left( D1\right) -\left(
D2\right) $ hold. In addition, the following assumption also holds:\newline
$\left( D3\right) $ there exist two positive constants $\lambda _{0},k_{0}$
such that%
\begin{equation*}
V_{\infty }^{3-p}\rho _{\infty }^{p-2}>\frac{2^{\frac{6-p}{2}}}{p}\left(
p-2\right) ^{p-2}\left( 3-p\right) ^{3-p}\left( 1+\Lambda \left( \lambda
_{0},k_{0}\right) \right)
\end{equation*}%
and%
\begin{eqnarray*}
&&\frac{1}{2}\int_{\mathbb{R}^{3}}V\left( x\right) \left(
u_{0}^{2}+v_{0}^{2}\right) dx+\frac{1}{4}\int_{\mathbb{R}^{3}}\rho \left(
x\right) \phi _{\rho ,\left( u_{0},v_{0}\right) }\left(
u_{0}^{2}+v_{0}^{2}\right) dx \\
&<&\frac{1}{2}\int_{\mathbb{R}^{3}}\lambda _{0}\left(
u_{0}^{2}+v_{0}^{2}\right) dx+\frac{k_{0}}{4}\int_{\mathbb{R}^{3}}\phi
_{k_{0},\left( u,v\right) }\left( u_{0}^{2}+v_{0}^{2}\right) dx,
\end{eqnarray*}%
where $\left( u_{0},v_{0}\right) \in \mathbf{H}_{r}\setminus \left\{ \left(
0,0\right) \right\} $ is a minimizer of the minimization problem (\ref{2-17}%
) in Proposition \ref{p1} with $\theta =\lambda _{0}$ and $k=k_{0}.$\newline
Then for every%
\begin{equation*}
\Lambda \left( \lambda _{0},k_{0}\right) <\beta <\frac{p}{2^{\frac{6-p}{2}}}%
\left( \frac{V_{\infty }}{3-p}\right) ^{3-p}\left( \frac{\rho _{\infty }}{p-2%
}\right) ^{p-2}-1,
\end{equation*}%
system $(HF_{\beta })$ admits a vectorial ground state solution $\left(
u_{\beta }^{\left( 1\right) },v_{\beta }^{\left( 1\right) }\right) \in
\mathbf{H}$ satisfying $J_{\beta }\left( u_{\beta }^{\left( 1\right)
},v_{\beta }^{\left( 1\right) }\right) =\inf_{\left( u,v\right) \in \mathbf{H%
}}J_{\beta }(u,v)<0.$
\end{theorem}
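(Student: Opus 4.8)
The plan is to produce the vectorial ground state by minimizing $J_\beta$ directly over $\mathbf{H}$, showing that the infimum is negative and strictly below the ``semitrivial'' and ``at-infinity'' energy levels, so that no loss of compactness occurs. I would first establish coercivity/boundedness from below of $J_\beta$ on $\mathbf{H}$ in the prescribed $\beta$-range: using $2<p<3$ and the interpolation bound $\int |u|^{\frac p2}|v|^{\frac p2}\le\frac12\int(|u|^p+|v|^p)$, the numerator/denominator structure behind $\Lambda(V_{\max},\rho_{\max})$ (essentially Proposition \ref{p1}(i) applied with $\theta=V_{\max}$, $k=\rho_{\max}$) gives, for $\beta<\Lambda(V_{\max},\rho_{\max})$ — which is implied by $\beta<\frac{p}{2^{(6-p)/2}}(\frac{V_\infty}{3-p})^{3-p}(\frac{\rho_\infty}{p-2})^{p-2}-1$ once one checks this number is $\le\Lambda(V_{\max},\rho_{\max})$ via monotonicity of $\Lambda$ in its arguments — a lower bound $J_\beta(u,v)\ge c\|(u,v)\|^2$ for $\|(u,v)\|$ large, whence $m_\beta:=\inf_{\mathbf{H}}J_\beta>-\infty$. (The subquadratic growth of the pure-power and coupling terms for $p<4$, combined with the nonnegative Coulomb term, actually makes this step fairly soft.)

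Next I would show $m_\beta<0$. This is where condition $(D3)$ enters: take the radial minimizer $(u_0,v_0)\in\mathbf{H}_r$ of $\Lambda(\lambda_0,k_0)$ and evaluate $J_\beta$ along the scaling path $t\mapsto(tu_0,tv_0)$. Writing $A=\frac12\|(u_0,v_0)\|^2_{\lambda_0}$ (with $V$ replaced by its infimum $\lambda_0$... more precisely one must compare $\frac12\int V(u_0^2+v_0^2)+\frac14\int\rho\,\phi_{\rho,(u_0,v_0)}(u_0^2+v_0^2)$ against the $\lambda_0,k_0$-version, and the second inequality in $(D3)$ says exactly that the former is smaller), $B=\frac1p\int(|u_0|^p+|v_0|^p)$, $C=\frac2p\int|u_0|^{p/2}|v_0|^{p/2}$, the value $J_\beta(tu_0,tv_0)$ is bounded above by $\tilde At^2+\tfrac14(\text{Coulomb})t^4-(B+\beta C)t^p$ with $\tilde A\le A$, and the definition of $\Lambda(\lambda_0,k_0)$ forces $A=B+\Lambda(\lambda_0,k_0)\,C$. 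Hence for $\beta>\Lambda(\lambda_0,k_0)$ one has $B+\beta C>A\ge\tilde A$, and since $2<p<4$ the term $-(B+\beta C)t^p$ dominates $\tilde At^2$ for small $t>0$ after... wait, for small $t$ the $t^2$ term dominates; one instead uses that at $t=1$-type scaling, or rescales space as well. Cleaner: use the two-parameter dilation $(u_0,v_0)\mapsto(t u_0(x/s),tv_0(x/s))$; optimizing over $s$ kills the gradient/potential balance and one reads off $\min_{t>0}[a t^2 - b t^p + o]<0$ precisely when $b>a$, i.e. when $\beta>\Lambda(\lambda_0,k_0)$ (the Coulomb $t^4$ term is harmless since it carries a small $s$-power). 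This yields a test pair with $J_\beta<0$, so $m_\beta<0$.

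The main obstacle is recovering compactness of a minimizing sequence $\{(u_n,v_n)\}$, which by the lower bound is bounded in $\mathbf{H}$; up to a subsequence $(u_n,v_n)\rightharpoonup(u_\beta,v_\beta)$. Brezis--Lieb splitting applied to the $L^p$, coupling, and Coulomb terms (the Coulomb quadratic form in $\rho(u^2+v^2)$ splits by the standard Hardy--Littlewood--Sobolev/weak-convergence argument, using $\rho_\infty$ for the vanishing part) gives $m_\beta\ge J_\beta(u_\beta,v_\beta)+J^\infty_\beta(\text{dichotomy part})$, where $J^\infty_\beta$ is the limit functional with $V\equiv V_\infty$, $\rho\equiv\rho_\infty$. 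The first strict inequality in $(D3)$, $V_\infty^{3-p}\rho_\infty^{p-2}>\frac{2^{(6-p)/2}}{p}(p-2)^{p-2}(3-p)^{3-p}(1+\Lambda(\lambda_0,k_0))$, is exactly what makes the ``energy at infinity'' $m^\infty_\beta$ satisfy $m^\infty_\beta\ge0>m_\beta$ for the given $\beta$-window (this is again Proposition \ref{p1}(i) with $\theta=V_\infty,k=\rho_\infty$, giving $J^\infty_\beta\ge$ a positive multiple of the denominator when $\beta<\frac{p}{2^{(6-p)/2}}(\frac{V_\infty}{3-p})^{3-p}(\frac{\rho_\infty}{p-2})^{p-2}-1$). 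Therefore the dichotomy/vanishing part cannot carry negative energy, forcing it to vanish and $(u_n,v_n)\to(u_\beta,v_\beta)$ strongly; then $J_\beta(u_\beta,v_\beta)=m_\beta<0$, so $(u_\beta,v_\beta)\ne(0,0)$ and is a critical point. Finally, since $m_\beta<0$ while any semitrivial solution (say $v=0$) would be a solution of a scalar Schrödinger--Poisson--type equation with $p<3$, whose associated functional — by Proposition \ref{p1}(i)-type estimates, or by the known nonexistence range quoted in the introduction — has nonnegative minimal energy, the ground state cannot be semitrivial; replacing $(u_\beta,v_\beta)$ by $(|u_\beta|,|v_\beta|)$ and invoking the maximum principle we get that it is vectorial (indeed positive), completing the proof. $\square$
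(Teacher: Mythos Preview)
Your overall architecture (global minimization, negative infimum, Brezis--Lieb splitting against the limit problem $J^\infty_{V_\infty,\rho_\infty,\beta}$, then exclude semitriviality) matches the paper, and the compactness step is essentially the same as Lemma~\ref{L2-5} combined with the splitting argument in the paper's proof. However, there is a genuine gap in your coercivity step, and two places where the paper's route is markedly simpler.

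\textbf{The coercivity gap.} Your claim that ``$\beta<\Lambda(V_{\max},\rho_{\max})$ gives $J_\beta(u,v)\ge c\|(u,v)\|^2$ for $\|(u,v)\|$ large'' does not work. First, $\Lambda(\theta,k)$ is an infimum over $\mathbf{H}_r$, so the quotient inequality it encodes says nothing about nonradial $(u,v)\in\mathbf{H}$. Second, the comparison $J_\beta\le J_{\beta,V_{\max},\rho_{\max}}$ goes the wrong way for a lower bound. More fundamentally, your parenthetical that this step is ``fairly soft'' is false: for $2<p<3$ and \emph{constant} potentials, the functional is bounded below on $\mathbf{H}_r$ but \emph{not} on $\mathbf{H}$ (this is exactly Ruiz's multibump phenomenon, invoked in Theorem~\ref{t1-2}(ii) and Lemma~\ref{L5.1}). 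Coercivity on $\mathbf{H}$ therefore cannot come from any purely radial quantity like $\Lambda$; it must use the spatial structure of $V,\rho$. The paper does this in Lemma~\ref{m5}: via Lions's inequality one reduces to the pointwise function $s\mapsto \frac14V(x)s^2+\frac{1}{\sqrt8}\rho(x)s^3-\frac{1+\beta}{p}s^p$, and Lemma~\ref{g3} shows this is nonnegative outside the set $\mathcal{B}=\{x:V(x)^{3-p}\rho(x)^{p-2}<\text{const}\cdot(1+\beta)\}$. The upper bound on $\beta$ in terms of $V_\infty,\rho_\infty$ is precisely what makes $|\mathcal{B}|<\infty$ (Remark~\ref{R1}), whence $J_\beta(u,v)\ge\frac14\|(u,v)\|^2+2|\mathcal{B}|\inf_{\mathcal{B}}m_\beta(x)>-\infty$.

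\textbf{Two simplifications you missed.} For $m_\beta<0$, no scaling or dilation is needed: condition $(D3)$ says directly that $J_\beta(u_0,v_0)<J_{\lambda_0,k_0,\beta}(u_0,v_0)$, and since $(u_0,v_0)$ realizes $\Lambda(\lambda_0,k_0)$, the latter equals $(\Lambda(\lambda_0,k_0)-\beta)\cdot\frac{2}{p}\int|u_0|^{p/2}|v_0|^{p/2}<0$ for $\beta>\Lambda(\lambda_0,k_0)$. For the vectorial conclusion, you do not need any scalar nonexistence result (which, with nonconstant $V,\rho$, you do not have available anyway): the paper's Lemma~\ref{L2-4} observes that if $v_\beta=0$ then $(\sqrt{s}u_\beta,\sqrt{1-s}u_\beta)$ has strictly smaller energy for some $s\in(0,1)$ by Lemma~\ref{L2-1}, contradicting minimality.
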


\begin{remark}
\label{R1}By conditions $\left( D1\right) -\left( D3\right) ,$ for every%
\begin{equation*}
\Lambda \left( \lambda _{0},k_{0}\right) <\beta <\frac{p}{2^{\frac{6-p}{2}}}%
\left( \frac{V_{\infty }}{3-p}\right) ^{3-p}\left( \frac{\rho _{\infty }}{p-2%
}\right) ^{p-2}-1,
\end{equation*}%
the sublevel set
\begin{equation*}
\mathcal{B}:=\left\{ x\in \mathbb{R}^{3}:V^{3-p}\left( x\right) \rho
^{p-2}\left( x\right) <\frac{2^{\frac{6-p}{2}}}{p}\left( p-2\right)
^{p-2}\left( 3-p\right) ^{3-p}\left( 1+\beta \right) \right\}
\end{equation*}%
is nonempty and $0<\left\vert \mathcal{B}\right\vert <\infty .$
\end{remark}

\begin{theorem}
\label{t2}Assume that $2<p<4$ and conditions $\left( D1\right) -\left(
D2\right) $ hold. In addition, we assume that\newline
$\left( D4\right) $ $V_{\infty }=V_{\max }\geq \lambda $ and $\rho _{\infty
}=\rho _{\max }\geq \rho _{\min }$ with strictly inequalities hold on a
positive measure set.\newline
Then there exists $\beta _{0}>\frac{p-2}{2}$ such that for every $\beta
>\beta _{0},$ system $(HF_{\beta })$ admits a vectorial positive solution $%
\left( u_{\beta }^{\left( 1\right) },v_{\beta }^{\left( 1\right) }\right)
\in \mathbf{H}$ satisfying $J_{\beta }\left( u_{\beta }^{\left( 1\right)
},v_{\beta }^{\left( 1\right) }\right) >0.$
\end{theorem}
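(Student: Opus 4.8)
To establish Theorem~\ref{t2} I would exploit the invariance of $J_\beta$ under the $\mathbb{Z}_2$-action $(u,v)\mapsto(v,u)$ and reduce to a single scalar equation. On the diagonal $u=v=w$, system $(HF_{\beta})$ collapses to
\[
-\Delta w+V(x)w+\rho(x)\phi_{\rho,(w,w)}w=(1+\beta)|w|^{p-2}w\qquad\text{in }\mathbb{R}^3,
\]
with associated energy $\mathcal{E}_\beta(w):=\tfrac12 J_\beta(w,w)=\tfrac12\|w\|_V^2+\tfrac14\int_{\mathbb{R}^3}\rho\,\phi_{\rho,(w,w)}w^2\,dx-\tfrac{1+\beta}{p}\int_{\mathbb{R}^3}|w|^p\,dx$, where $\|w\|_V^2=\int_{\mathbb{R}^3}(|\nabla w|^2+Vw^2)\,dx$. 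By the principle of symmetric criticality, any critical point $w_\beta$ of $\mathcal{E}_\beta$ on $H^1(\mathbb{R}^3)$ yields a solution $(w_\beta,w_\beta)$ of $(HF_{\beta})$; if moreover $w_\beta\ge 0$ and $w_\beta\not\equiv 0$, then elliptic regularity and the strong maximum principle give $w_\beta>0$, so $(w_\beta,w_\beta)$ is a vectorial positive solution and $J_\beta(w_\beta,w_\beta)=2\mathcal{E}_\beta(w_\beta)$. Since the wanted solution has positive energy, I would look for a mountain--pass critical point of $\mathcal{E}_\beta$. (Writing $w=(1+\beta)^{-1/(p-2)}z$ turns the equation into a Coulomb perturbation of the scalar NLS $-\Delta z+Vz=|z|^{p-2}z$ with small coefficient $\varepsilon=(1+\beta)^{-2/(p-2)}$, which gives the right heuristic and supplies the comparison object below.)

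Next I would verify the mountain--pass geometry of $\mathcal{E}_\beta$. Since $p>2$ and the Coulomb term is nonnegative, $\mathcal{E}_\beta\ge\delta>0$ on a small sphere $\{\|w\|_V=r\}$. Along a ray, $\mathcal{E}_\beta(tw)=\tfrac{t^2}{2}\|w\|_V^2+\tfrac{t^4}{4}\int\rho\,\phi_{\rho,(w,w)}w^2\,dx-\tfrac{(1+\beta)t^p}{p}\int|w|^p\,dx$; for $2<p<4$ an elementary analysis shows $\min_{t>0}\mathcal{E}_\beta(tw)<0$ exactly when $1+\beta$ exceeds an explicit constant determined by $w$ (the quartic Coulomb term only dominates for $t$ large, so the large pure-power coefficient $1+\beta$ drives the functional below zero in an intermediate range). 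A direct estimate of this threshold shows it may be taken larger than $\tfrac{p-2}{2}$, as in the statement; calling it $\beta_0$, for $\beta>\beta_0$ there is $e\ne 0$ with $\mathcal{E}_\beta(e)\le 0$, whence the mountain--pass value $c_\beta:=\inf_{\gamma}\max_{t\in[0,1]}\mathcal{E}_\beta(\gamma(t))\in(0,\infty)$, the infimum taken over paths joining $0$ to $e$.

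The heart of the proof — and the step I expect to be the real obstacle — is the compactness at level $c_\beta$. Two issues arise. First, the \emph{boundedness of Palais--Smale sequences}: this is the classical difficulty for Schr\"odinger--Poisson energies when $2<p<4$, because the linear combinations of $\mathcal{E}_\beta$ and $\langle\mathcal{E}_\beta'(\cdot),\cdot\rangle$ that would control $\|w\|_V$ assign the Coulomb term an unfavourable sign. I would circumvent this by working on a Nehari--Pohozaev type constraint for the (rescaled) equation, or by Jeanjean's monotonicity trick applied to the family $\mathcal{E}_{\beta,\theta}$ with the pure-power term weighted by $\theta\in[\tfrac12,1]$ — the kind of analytic device anticipated in the introduction. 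Second, even with bounded $(PS)_{c_\beta}$ sequences, the embedding $H^1(\mathbb{R}^3)\hookrightarrow L^p$ is not compact, so I would compare with the autonomous ``problem at infinity'', obtained from the diagonal equation by replacing $V(x),\rho(x)$ with $V_\infty,\rho_\infty$; this problem has a radial ground state at a mountain--pass level $c_\beta^\infty>0$ (by the autonomous theory, valid for $\beta$ large), and a Lions-type concentration--compactness dichotomy shows that $c_\beta^\infty$ is the only obstruction to strong convergence. Here condition $(D4)$ enters decisively: since $V\le V_\infty=V_{\max}$ and $\rho\le\rho_\infty=\rho_{\max}$ with strict inequality on sets of positive measure, testing $\mathcal{E}_\beta$ against (translates of) the autonomous ground state strictly lowers the energy, so $c_\beta<c_\beta^\infty$. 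The dichotomy then rules out vanishing and splitting, and $c_\beta$ is attained by some $w_\beta\ne 0$.

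Finally, since $\mathcal{E}_\beta(w)=\mathcal{E}_\beta(|w|)$, the minimizing paths may be chosen among nonnegative functions, so one may take $w_\beta\ge 0$; as noted, $w_\beta>0$ by the maximum principle. Putting $\left(u_\beta^{(1)},v_\beta^{(1)}\right):=\left(w_\beta,w_\beta\right)\in\mathbf{H}$ and invoking symmetric criticality, this is a vectorial positive solution of $(HF_{\beta})$ with $J_\beta\left(u_\beta^{(1)},v_\beta^{(1)}\right)=2c_\beta>0$, which is the assertion. To summarise, the non-coercive regime $2<p<4$ makes the compactness analysis delicate — one must both bound the Palais--Smale sequences and secure the strict inequality $c_\beta<c_\beta^\infty$ — and taking $\beta$ large together with hypothesis $(D4)$ is precisely what makes both possible.
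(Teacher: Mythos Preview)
Your diagonal reduction $(u,v)=(w,w)$ is a legitimate and economical device: symmetric criticality applies, vectoriality comes for free, and the comparison $c_\beta<c_\beta^\infty$ via $(D4)$ is exactly the right compactness input. This is a genuinely different route from the paper, which works with the full system on a \emph{filtered Nehari manifold}: it isolates the sublevel set
\[
\mathcal{N}_\beta^{(1)}=\Bigl\{(u,v)\in\mathcal{N}_\beta:\ J_\beta(u,v)<D_0,\ \|(u,v)\|<R_0\Bigr\}
\]
(for explicit constants $D_0,R_0$ depending only on $p,\lambda,\rho_{\max}$), shows $\mathcal{N}_\beta^{(1)}\subset\mathcal{N}_\beta^-$, produces a test element in $\mathcal{N}_\beta^{(1)}$ for $\beta$ large, and then minimizes $J_\beta$ there. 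The paper must then argue separately (via an energy comparison with the scalar problem) that the minimizer is not semitrivial --- a step your diagonal reduction bypasses.

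The substantive gap in your proposal is the boundedness of Palais--Smale sequences. You correctly flag it as ``the real obstacle'', but both remedies you name --- a Nehari--Pohozaev constraint, or Jeanjean's monotonicity trick followed by a limit in the auxiliary parameter --- ultimately rely on the Pohozaev identity for actual solutions, and that identity requires $V,\rho\in C^1$ together with sign conditions on $2V+\langle\nabla V,x\rangle$ and $\langle\nabla\rho,x\rangle+\tfrac{2(p-3)}{p-2}\rho$. These are precisely the paper's hypotheses $(D5)$--$(D6)$, which are \emph{not} assumed in Theorem~\ref{t2} (they enter only in Theorem~\ref{t3}). Under $(D1)$--$(D2)$ and $(D4)$ alone, your mountain--pass scheme has no mechanism to bound the PS sequence: the combination $\mathcal{E}_\beta-\tfrac1p\langle\mathcal{E}_\beta',\cdot\rangle$ leaves the Coulomb term with coefficient $\tfrac14-\tfrac1p<0$ for $p<4$, and neither Jeanjean nor Pohozaev rescues this without extra structure on the potentials. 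The paper's filtration sidesteps the issue entirely: the norm bound is \emph{built into} the definition of $\mathcal{N}_\beta^{(1)}$, so every minimizing sequence is a priori bounded by $R_0$, and Ekeland plus the Brezis--Lieb splitting against the problem at infinity finishes the job. If you want to keep your diagonal reduction, the clean fix is to transplant the paper's filtration to the scalar Nehari manifold of $\mathcal{E}_\beta$ rather than running a bare mountain pass.
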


Next, we establish the existence of vectorial ground state solution of
system $(HF_{\beta }).$\newline
$\left( D5\right) $ $V\in C^{1}(\mathbb{R}^{3})\cap L^{\infty }(\mathbb{R}%
^{3})$ and $2V\left( x\right) +\langle \nabla V(x),x\rangle \geq d_{0}$ for
all $x\in \mathbb{R}^{3}$ and for some $d_{0}>0.$\newline
$\left( D6\right) $ $\rho \in C^{1}(\mathbb{R}^{3})\cap L^{\infty }(\mathbb{R%
}^{3})$ and $\langle \nabla \rho (x),x\rangle +\frac{2\left( p-3\right) }{p-2%
}\rho \left( x\right) \geq 0$ for all $x\in \mathbb{R}^{3}.$

\begin{theorem}
\label{t3}Assume that $3\leq p<4$ and conditions $\left( D1\right) -\left(
D2\right) $ and $\left( D4\right) -\left( D6\right) $ hold. Then the
following statements are true. \newline
$\left( i\right) $ If $3\leq p<\frac{1+\sqrt{73}}{3}$, then there exists $%
\widehat{\beta }_{0}\geq \beta _{0}$ such that for every $\beta >\widehat{%
\beta }_{0},$ system $(HF_{\beta })$ admits a vectorial ground state
solution $\left( u_{\beta },v_{\beta }\right) \in \mathbf{H}$ satisfying $%
J_{\beta }\left( u_{\beta },v_{\beta }\right) >0.$\newline
$\left( ii\right) $ If $\frac{1+\sqrt{73}}{3}\leq p<4,$ then for every $%
\beta >\beta _{0},$ system $(HF_{\beta })$ admits a vectorial ground state
solution $\left( u_{\beta },v_{\beta }\right) \in \mathbf{H}$ satisfying $%
J_{\beta }\left( u_{\beta },v_{\beta }\right) >0.$
\end{theorem}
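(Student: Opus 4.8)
The plan is to find the vectorial ground state as a minimizer of $J_\beta$ over the Nehari–Pohozaev type manifold, following the scheme Ruiz introduced for the Schrödinger–Poisson equation and which the authors used in the autonomous setting \cite{SW}, but now tracking carefully the role of the potentials $V,\rho$ and the coupling $\beta$. First I would set up the constraint manifold
\[
\mathcal{M}_\beta=\bigl\{(u,v)\in\mathbf{H}\setminus\{(0,0)\}\ :\ G_\beta(u,v)=0\bigr\},
\]
where $G_\beta$ is the Nehari–Pohozaev functional obtained by combining $\langle J_\beta'(u,v),(u,v)\rangle$ with the Pohozaev identity for $(HF_\beta)$; here conditions $(D5)$ and $(D6)$ are exactly what is needed so that the potential terms $2V+\langle\nabla V,x\rangle$ and $\langle\nabla\rho,x\rangle+\frac{2(p-3)}{p-2}\rho$ enter with the right sign and $G_\beta$ is coercive on the natural scaling. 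I would then show that every nontrivial solution of $(HF_\beta)$ lies on $\mathcal{M}_\beta$, so that $m_\beta:=\inf_{\mathcal{M}_\beta}J_\beta$ is a lower bound for the ground-state energy, and conversely that $\mathcal{M}_\beta$ is a natural constraint, i.e. a minimizer of $J_\beta|_{\mathcal{M}_\beta}$ is a free critical point (the standard Lagrange-multiplier argument using that $\langle G_\beta'(u,v),(u,v)\rangle\neq0$ on $\mathcal{M}_\beta$, which is where the $3\le p<4$ restriction and the split into $3\le p<\frac{1+\sqrt{73}}{3}$ vs. $\frac{1+\sqrt{73}}{3}\le p<4$ will show up as two regimes for the sign computations).

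Next I would carry out the minimization. Using the fibering map $t\mapsto J_\beta(t\,u(\cdot/t))$ or the appropriate one-parameter scaling adapted to the Pohozaev identity, one checks that for each fixed $(u,v)$ with $\int(|u|^p+|v|^p+2\beta|u|^{p/2}|v|^{p/2})>0$ there is a unique point on the scaling ray in $\mathcal{M}_\beta$, and that along $\mathcal{M}_\beta$ one has $J_\beta=$ (positive combination of the quadratic, Coulomb and $L^p$ terms), giving $m_\beta>0$. Coercivity of $J_\beta$ on $\mathcal{M}_\beta$ plus the scaling then yields a bounded minimizing sequence $(u_n,v_n)$; after passing to a subsequence it converges weakly in $\mathbf{H}$ and strongly in $L^s_{loc}$ for $2\le s<6$. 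The main point is to prevent vanishing and dichotomy: vanishing is excluded because $m_\beta>0$ forces $\int(|u_n|^p+|v_n|^p+2\beta|u_n|^{p/2}|v_n|^{p/2})$ to stay bounded away from $0$, and for $\beta>\beta_0$ large the coupling term dominates so that a nonzero weak limit must be vectorial (neither component can vanish without violating the constraint together with the coupling). Dichotomy / loss of compactness to the problem at infinity is ruled out using condition $(D4)$: since $V_\infty=V_{\max}$, $\rho_\infty=\rho_{\max}$ with strict inequality on a set of positive measure, the energy of the autonomous "problem at infinity" strictly exceeds $m_\beta$, so a splitting would be energetically forbidden — this is the classical Lions-type comparison, and it also explains why $(D4)$ is imposed. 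Hence $(u_n,v_n)\to(u_\beta,v_\beta)$ strongly, $(u_\beta,v_\beta)\in\mathcal{M}_\beta$ realizes $m_\beta$, and by the natural-constraint property it solves $(HF_\beta)$; replacing $u_\beta,v_\beta$ by $|u_\beta|,|v_\beta|$ and invoking the maximum principle gives positivity, so $(u_\beta,v_\beta)$ is a vectorial positive ground state with $J_\beta(u_\beta,v_\beta)=m_\beta>0$. Finally, comparing $m_\beta$ with the semitrivial levels (the ground-state energies of the scalar Schrödinger–Poisson–Slater type equation for $u$ alone and for $v$ alone) and using that for $\beta$ past the threshold $\widehat\beta_0\ge\beta_0$ in case $(i)$, respectively $\beta_0$ in case $(ii)$, the coupling lowers $m_\beta$ strictly below both semitrivial levels, forces the ground state to be genuinely vectorial; Theorem \ref{t2} supplies the existence of the competitor vectorial solution that certifies $\mathcal{M}_\beta\neq\emptyset$ and $m_\beta<$ semitrivial levels for $\beta>\beta_0$.

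The hard part will be two intertwined technical issues. The first is showing that $\mathcal{M}_\beta$ is genuinely a $C^1$ natural constraint with $\langle G_\beta'(u,v),(u,v)\rangle$ of a definite sign on it: because the Coulomb term scales with a different power than the kinetic and $L^p$ terms, the derivative along the fiber is a sum of monomials in $t$ with coefficients of mixed sign, and one must verify — this is precisely where $p\ge3$, the auxiliary exponent $\frac{1+\sqrt{73}}{3}$, and conditions $(D5)$–$(D6)$ are used — that the relevant quadratic/cubic in $t$ has a unique positive root at which the second-order condition is strict. The second is the compactness/comparison step: one needs a sharp version of the Brezis–Lieb splitting for the nonlocal term $\int\rho\,\phi_{\rho,(u,v)}(u^2+v^2)$ together with the splitting of the Pohozaev constraint, and then the strict inequality $m_\beta<m_\beta^\infty$ coming from $(D4)$ must survive this decomposition. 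I would expect the Coulomb-term Brezis–Lieb lemma and the strict monotonicity of the ground-state level in the potentials (so that the non-autonomous level sits strictly below the autonomous one at infinity) to be the most delicate estimates, and I would isolate them as preparatory lemmas before assembling the proof of Theorem \ref{t3}.
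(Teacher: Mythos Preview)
Your strategy differs substantially from the paper's. The paper does \emph{not} use a Nehari--Pohozaev manifold as the variational constraint. It works entirely on the ordinary Nehari manifold $\mathcal{N}_\beta$ with the multiplicative fibering $t\mapsto(tu,tv)$, and introduces (Section~3) a filtration $\mathcal{N}_\beta^{(1)}\subset\mathcal{N}_\beta^{-}$ cut out by a norm threshold; Theorem~\ref{t2} produces the minimizer of $J_\beta$ on $\mathcal{N}_\beta^{(1)}$, which is a vectorial positive solution but a priori only minimizes $J_\beta$ over $\mathcal{N}_\beta^{-}$. The Pohozaev identity and conditions $(D5)$--$(D6)$ enter only \emph{after} the solution is found, in Proposition~\ref{t6}: there the authors combine the Nehari and Pohozaev relations for an actual solution into a linear system in the scalar quantities $z_1,\dots,z_6$ and show, by elementary algebra on that system, that every nontrivial solution satisfies $\phi''_{\beta,(u,v)}(1)<0$, i.e.\ lies in $\mathcal{N}_\beta^{-}$. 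The threshold $\frac{1+\sqrt{73}}{3}$ comes out of that algebra (whether the sign is unconditional or requires the extra largeness $\beta>\widehat\beta_0$). Once all solutions sit in $\mathcal{N}_\beta^{-}$, the $\mathcal{N}_\beta^{-}$-minimizer from Theorem~\ref{t2} is automatically a ground state, and the proof of Theorem~\ref{t3} in the paper is literally one sentence.

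Your direct Nehari--Pohozaev route is a reasonable plan in the autonomous setting, but it hits an obstacle here that the paper's approach is designed to avoid: for nonautonomous $V,\rho$ the dilation $u\mapsto t\,u(\cdot/t)$ does not turn $\int V(x)u^2$ or $\int\rho\,\phi_{\rho,(u,v)}(u^2+v^2)$ into clean powers of $t$ (you get $t^{3}\!\int V(tx)u^2$, etc.), so there is no one-parameter scaling along which your $G_\beta$ is a polynomial in $t$, and the ``unique point on the scaling ray in $\mathcal{M}_\beta$'' step does not go through as written. The paper sidesteps this precisely by keeping the constraint purely Nehari (where the fibering $t\mapsto(tu,tv)$ is unaffected by $V,\rho$) and invoking the Pohozaev information only a posteriori on a genuine solution, where the extra $\langle\nabla V,x\rangle$ and $\langle\nabla\rho,x\rangle$ terms are fixed numbers rather than functions of a fibering parameter. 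If you want to keep your approach you would need a replacement for the scaling projection (e.g.\ a monotonicity/deformation argument for $G_\beta$ along some other path), which is significantly more work than the paper's two-step decoupling.
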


To study the existence of nonradial ground state solutions, we consider the
following system:%
\begin{equation}
\left\{
\begin{array}{ll}
-\Delta u+V_{\varepsilon }\left( x\right) u+\rho _{\varepsilon }\left(
x\right) \phi _{\rho ,\left( u,v\right) }u=\left\vert u\right\vert
^{p-2}u+\beta \left\vert v\right\vert ^{\frac{p}{2}}\left\vert u\right\vert
^{\frac{p}{2}-2}u & \text{ in }\mathbb{R}^{3}, \\
-\Delta v+V_{\varepsilon }\left( x\right) v+\rho _{\varepsilon }\left(
x\right) \phi _{\rho ,\left( u,v\right) }v=\left\vert v\right\vert
^{p-2}v+\beta \left\vert u\right\vert ^{\frac{p}{2}}\left\vert v\right\vert
^{\frac{p}{2}-2}v & \text{ in }\mathbb{R}^{3},%
\end{array}%
\right.  \tag*{$\left( HF_{\varepsilon ,\beta }\right) $}
\end{equation}%
where $V_{\varepsilon }(x)=V(\varepsilon x)$ and $\rho _{\varepsilon
}(x)=\rho (\varepsilon x)$ for any $\varepsilon >0$. Then we have the
following result.

\begin{theorem}
\label{th1.4}Assume that $2<p<3$ and conditions $\left( D1\right) -\left(
D2\right) $ hold. In addition, we assume that\newline
$(D7)$ $V(x)=V(|x|)$ and $\rho (x)=\rho (|x|);$\newline
$(D8)$ there exist a point $x_{0}\in \mathbb{R}^{3}$ and two positive
constants $\lambda _{0},k_{0}$ such that%
\begin{equation*}
x_{0}\in \mathcal{D}:=\left\{ x\in \mathbb{R}^{3}:V_{\min }<V(x)<\lambda _{0}%
\text{ and }\rho _{\min }<\rho (x)<k_{0}\right\}
\end{equation*}%
and
\begin{equation*}
V_{\infty }^{3-p}\rho _{\infty }^{p-2}>\frac{2^{\frac{6-p}{2}}}{p}\left(
p-2\right) ^{p-2}\left( 3-p\right) ^{3-p}\left( 1+\Lambda _{0}\left( \lambda
_{0},k_{0}\right) \right) .
\end{equation*}%
Then for every%
\begin{equation*}
\Lambda \left( \lambda _{0},k_{0}\right) <\beta <\frac{p}{2^{\frac{6-p}{2}}}%
\left( \frac{V_{\infty }}{3-p}\right) ^{3-p}\left( \frac{\rho _{\infty }}{p-2%
}\right) ^{p-2}-1,
\end{equation*}%
system $\left( HF_{\varepsilon ,\beta }\right) $ admits a vectorial positive
solution $\left( u_{\varepsilon },v_{\varepsilon }\right) \in \mathbf{H}$
such that
\begin{equation*}
J_{\varepsilon ,\beta }\left( u_{\varepsilon },v_{\varepsilon }\right)
<-K\leq \inf\limits_{u\in \mathbf{H}_{r}}J_{\varepsilon ,\beta }(u,v)\text{
for some constant }K>0\text{ and }\varepsilon >0\text{ sufficiently small,}
\end{equation*}%
where $J_{\varepsilon ,\beta }=J_{\beta }$ for $V(x)=V_{\varepsilon }(x)$
and $\rho (x)=\rho _{\varepsilon }(x).$ Furthermore, $\left( u_{\varepsilon
},v_{\varepsilon }\right) $ is a nonradial vectorial ground state solution
of system $\left( HF_{\varepsilon ,\beta }\right) .$
\end{theorem}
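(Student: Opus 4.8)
The plan is to compare the energy over the full space $\mathbf{H}$ with the energy over the radial subspace $\mathbf{H}_r$, showing that under $(D7)$--$(D8)$ a suitable \emph{translated} test function beats every radial function, so that the infimum over $\mathbf{H}$ is strictly lower than the infimum over $\mathbf{H}_r$ for $\varepsilon$ small; this strict gap forces any global minimizer to be nonradial. First I would record what is already available: by Theorem \ref{t1-2}(ii) together with $(D8)$ (which is the $\varepsilon$-scaled analogue of $(D3)$ via $V_\varepsilon, \rho_\varepsilon$ converging to $V(0), \rho(0)$ — note $V_\varepsilon(x)=V(\varepsilon x)\to V(0)$, $\rho_\varepsilon(x)\to\rho(0)$ as $\varepsilon\to 0$ locally uniformly, and the relevant constants are controlled by $\lambda_0, k_0$ through the hypothesis $x_0\in\mathcal D$), the functional $J_{\varepsilon,\beta}$ is bounded below on $\mathbf{H}$ and, arguing exactly as in Theorem \ref{t1}, attains a global minimizer $(u_\varepsilon, v_\varepsilon)$ with $J_{\varepsilon,\beta}(u_\varepsilon,v_\varepsilon)<0$; this minimizer is a vectorial positive solution (positivity by replacing $(u,v)$ with $(|u|,|v|)$, which does not increase the energy since $\beta>0$, plus the maximum principle applied componentwise to $(HF_{\varepsilon,\beta})$; vectoriality because $\beta>\Lambda(\lambda_0,k_0)$ makes the semitrivial levels strictly higher, as in Proposition \ref{p1} and Theorem \ref{t1}).

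The heart of the matter is the strict inequality $\inf_{\mathbf{H}} J_{\varepsilon,\beta} < \inf_{\mathbf{H}_r} J_{\varepsilon,\beta}$ for $\varepsilon$ small. For the lower bound on the radial infimum I would exploit that a radial function in $\mathbf{H}_r$ cannot concentrate near a single point $x_0\ne 0$; by a Strauss-type decay estimate and the fact that $V_\varepsilon, \rho_\varepsilon$ are radial with $V_\varepsilon\to V_\infty$, $\rho_\varepsilon\to\rho_\infty$ \emph{at every fixed scale going to infinity in the $\varepsilon$-picture}, one shows $\liminf_{\varepsilon\to 0}\inf_{\mathbf{H}_r}J_{\varepsilon,\beta} \ge c_\infty(\beta)$, where $c_\infty(\beta)$ is essentially the (negative) minimum energy of the limit problem with constant coefficients $V_\infty,\rho_\infty$; the hypothesis $V_\infty^{3-p}\rho_\infty^{p-2} > \frac{2^{(6-p)/2}}{p}(p-2)^{p-2}(3-p)^{3-p}(1+\Lambda_0(\lambda_0,k_0))$ together with $\beta < \frac{p}{2^{(6-p)/2}}(\frac{V_\infty}{3-p})^{3-p}(\frac{\rho_\infty}{p-2})^{p-2}-1$ guarantees this bound is not too negative, i.e. $c_\infty(\beta) \ge -K$ for an explicit $K>0$. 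For the upper bound on the full infimum I would take the minimizer $(u_0,v_0)\in\mathbf{H}_r$ of $\Lambda(\lambda_0,k_0)$ from Proposition \ref{p1}, translate it to be centered at $x_0/\varepsilon$, set $(\tilde u_\varepsilon,\tilde v_\varepsilon)(x)=(u_0,v_0)(x-x_0/\varepsilon)$, and compute $J_{\varepsilon,\beta}(\tilde u_\varepsilon,\tilde v_\varepsilon)$: since $V_\varepsilon(x_0/\varepsilon + y)=V(x_0+\varepsilon y)\to V(x_0)<\lambda_0$ and likewise $\rho_\varepsilon\to\rho(x_0)<k_0$ uniformly on the (essentially compact) support-mass of $(u_0,v_0)$, dominated convergence gives $\limsup_{\varepsilon\to 0}J_{\varepsilon,\beta}(\tilde u_\varepsilon,\tilde v_\varepsilon) \le$ [the value of the $\Lambda$-quotient functional at $(u_0,v_0)$ with $V(x_0),\rho(x_0)$ in place of $\lambda_0,k_0$] $- \frac{2\beta}{p}\int |u_0|^{p/2}|v_0|^{p/2}dx$, which by the condition $V(x_0)<\lambda_0$, $\rho(x_0)<k_0$ (the second displayed inequality in $(D8)$, read pointwise at $x_0$) and $\beta>\Lambda(\lambda_0,k_0)$ is $<-K$. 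Comparing, $\inf_{\mathbf{H}}J_{\varepsilon,\beta} \le J_{\varepsilon,\beta}(\tilde u_\varepsilon,\tilde v_\varepsilon) < -K \le \inf_{\mathbf{H}_r}J_{\varepsilon,\beta}$ for $\varepsilon$ small.

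The main obstacle is making the radial lower bound $\inf_{\mathbf{H}_r}J_{\varepsilon,\beta}\ge -K+o(1)$ rigorous: a radial minimizing sequence could in principle split its mass into annuli of growing radius where $V_\varepsilon$ is close to $V_\infty$, so one must rule out that this produces energy below $-K$. I would handle this by a concentration-compactness / splitting argument adapted to the fractional-growth regime $2<p<3$ — bounding the Coulomb term from below (it is nonnegative, and $\rho\ge\rho_{\min}>0$), controlling the cooperative power term via Young's inequality to absorb it into $\|(u,v)\|_p^p$, and then using the sharp Gagliardo--Nirenberg / scaling inequality that already underlies Proposition \ref{p1}(i) to get a lower bound in terms of $V_\varepsilon$-weighted norms; since $V_\varepsilon\ge \lambda$ everywhere and $\ge V_\infty-o(1)$ off a fixed compact set of the $\varepsilon$-picture, the bound $\ge -K+o(1)$ follows once $\varepsilon$ is small. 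Once the strict inequality holds, concluding is immediate: $(u_\varepsilon,v_\varepsilon)$ realizes $\inf_{\mathbf H}J_{\varepsilon,\beta}$, this value is $<-K\le\inf_{\mathbf H_r}J_{\varepsilon,\beta}$, hence $(u_\varepsilon,v_\varepsilon)\notin\mathbf H_r$, i.e. it is nonradial; being a global minimizer over all of $\mathbf H$ it is in particular a ground state of $(HF_{\varepsilon,\beta})$.
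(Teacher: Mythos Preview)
Your overall architecture is right: obtain a global minimizer on $\mathbf{H}$ via the mechanism of Theorem~\ref{t1} (this is exactly what the paper does, through Remark~\ref{r1.2}), bound the radial infimum below by a fixed $-K$, push the full infimum strictly below $-K$, and conclude nonradiality. The gap is in the middle step.

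Your upper bound on $\inf_{\mathbf{H}}J_{\varepsilon,\beta}$ uses a \emph{single} translated bump $(\tilde u_\varepsilon,\tilde v_\varepsilon)=(u_0,v_0)(\cdot-x_0/\varepsilon)$. This yields only a fixed negative number, essentially $J_{\beta}^{V(x_0),\rho(x_0)}(u_0,v_0)$, and there is no reason this should lie below the radial lower bound $-K$. In the paper the radial bound comes from the crude estimate $J_{\varepsilon,\beta}(u,v)\ge J_{\lambda,\rho_{\min},\beta}^{\infty}(u,v)>-K$ on $\mathbf{H}_r$ (Lemma~\ref{L5.2}(i), via Ruiz \cite{R1}); this $K$ depends on $V_{\min},\rho_{\min}$ and is not comparable to the energy of a single bump centered at $x_0$, where by $(D8)$ one has $V(x_0)>V_{\min}$ and $\rho(x_0)>\rho_{\min}$, so your test function sees \emph{larger} potentials than those entering $K$. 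Your proposed sharpening of the radial bound to involve $V_\infty,\rho_\infty$ via concentration--compactness does not help either: for small $\varepsilon$ a radial function feels $V_\varepsilon\approx V(0)$ near the origin, not $V_\infty$, and nothing in $(D7)$--$(D8)$ orders $V(0),\rho(0)$ relative to $V(x_0),\rho(x_0)$.

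The paper closes this gap with a \emph{multibump} construction (Lemma~\ref{L5.1}, following Ruiz \cite{R2}): for $\varepsilon$ small the ball $B(x_0/\varepsilon,1/\varepsilon)$ lies in the region where $V_\varepsilon<\lambda_0$ and $\rho_\varepsilon<k_0$, and one can pack $N$ disjointly supported translates of a truncation of $(u_0,v_0)$ inside it. The local terms add, giving energy $\le N\cdot J_{\beta}^{\infty}(u_{R_0},v_{R_0})+C_0$ with $J_{\beta}^{\infty}(u_{R_0},v_{R_0})<0$, while the Coulomb cross-terms are $O(N^2/N^3)\to 0$. Hence $\alpha_{\varepsilon,\beta}\to-\infty$ as $\varepsilon\to 0^+$, which beats \emph{any} fixed $-K$. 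This is the missing idea: you need infinitely many bumps, not one, to force the full infimum below the uniform radial floor. (A minor point: your citation of Theorem~\ref{t1-2}(ii) for boundedness below is backwards---that item gives $\inf J_\beta=-\infty$; you want Lemma~\ref{m5}/Theorem~\ref{t1}.)
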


\begin{remark}
\label{r1.2}Assume that conditions $(D1)-(D2)$ and $(D8)$ hold. Let $\left(
u_{0},v_{0}\right) \in \mathbf{H}_{r}\setminus \left\{ \left( 0,0\right)
\right\} $ be a minimizer of the minimization problem (\ref{2-17}) obtained
in Proposition \ref{p1} with $\theta =\lambda _{0}$ and $k=k_{0}.$ Define $%
u_{\varepsilon }(x)=u_{0}(x-\frac{x_{0}}{\varepsilon })$ and $v_{\varepsilon
}(x)=v_{0}(x-\frac{x_{0}}{\varepsilon })$, where $x_{0}\in \mathbb{R}^{3}$
as in condition $\left( D8\right) .$ Then it follows from conditions $%
(D1)-\left( D2\right) $ that%
\begin{eqnarray*}
\int_{\mathbb{R}^{3}}V_{\varepsilon }(u_{\varepsilon }^{2}+v_{\varepsilon
}^{2})dx &=&\int_{\mathbb{R}^{3}}V(\varepsilon
x+x_{0})(u_{0}^{2}+v_{0}^{2})dx \\
&=&\int_{\mathbb{R}^{3}}V(x_{0})(u_{0}^{2}+v_{0}^{2})dx+o(\varepsilon ) \\
&<&\lambda _{0}\int_{\mathbb{R}^{3}}(u_{0}^{2}+v_{0}^{2})dx\text{ for }%
\varepsilon >0\text{ sufficiently small,}
\end{eqnarray*}%
and
\begin{eqnarray*}
\int_{\mathbb{R}^{3}}\rho _{\varepsilon }(x)\phi _{\rho _{\varepsilon
},\left( u_{\varepsilon },v_{\varepsilon }\right) }\left( u_{\varepsilon
}^{2}+v_{\varepsilon }^{2}\right) dx &=&\int_{\mathbb{R}^{3}}\rho
(\varepsilon x+x_{0})\phi _{\rho (\varepsilon x+x_{0}),\left(
u_{0},v_{0}\right) }\left( u_{0}^{2}+v_{0}^{2}\right) dx \\
&=&\int_{\mathbb{R}^{2}}\rho (x_{0})\phi _{\rho (x_{0}),\left(
u_{0},v_{0}\right) }\left( u_{0}^{2}+v_{0}^{2}\right) dx+o(\varepsilon ) \\
&<&\int_{\mathbb{R}^{2}}k_{0}\phi _{k_{0},\left( u_{0},v_{0}\right) }\left(
u_{0}^{2}+v_{0}^{2}\right) dx\text{ for }\varepsilon >0\text{ sufficiently
small.}
\end{eqnarray*}%
These imply that when $V(x)$ and $\rho \left( x\right) $ are replaced by $%
V(\varepsilon x)$ and $\rho \left( \varepsilon x\right) ,$ respectively,
condition $(D3)$ still holds for $\varepsilon >0$ sufficiently small.
Therefore, by Theorem \ref{t1}, system $(HF_{\varepsilon ,\beta })$ admits a
vectorial ground state solution $\left( u_{\varepsilon ,\beta }^{\left(
1\right) },v_{\varepsilon ,\beta }^{\left( 1\right) }\right) \in \mathbf{H}$
satisfying $J_{\varepsilon ,\beta }\left( u_{\varepsilon ,\beta }^{\left(
1\right) },v_{\varepsilon ,\beta }^{\left( 1\right) }\right) <J_{\beta
}^{\infty }(u_{0},v_{0})<0.$
\end{remark}

The rest of this paper is organized as follows. After introducing some
preliminary results in Section 2, we establish two natural constraints in
Section 3. In Sections 4 and 5, we prove Theorems \ref{t1-2}, \ref{t1}, \ref{t2} and \ref{t3}. Finally, we
give the proof of Theorem \ref{th1.4} in Section 6.

\section{Preliminary results}

For sake of convenience, we set%
\begin{equation*}
F_{\beta }\left( u,v\right) :=|u|^{p}+\left\vert v\right\vert ^{p}+2\beta
|u|^{\frac{^{p}}{2}}\left\vert v\right\vert ^{\frac{p}{2}}.
\end{equation*}%
Then the energy functional $J_{\beta }$ is rewritten as%
\begin{equation*}
J_{\beta }(u,v)=\frac{1}{2}\left\Vert \left( u,v\right) \right\Vert ^{2}+%
\frac{1}{4}\int_{\mathbb{R}^{3}}\rho \left( x\right) \phi _{\rho ,\left(
u,v\right) }\left( u^{2}+v^{2}\right) dx-\frac{1}{p}\int_{\mathbb{R}%
^{3}}F_{\beta }\left( u,v\right) dx.
\end{equation*}%
Now we establish some estimates.

\begin{lemma}
\label{g3}Let $2<p<3$ and $\beta ,c,d>0.$ Let $f_{c,d}\left( s\right) =\frac{%
d}{4}+\sqrt{\frac{1}{8}}cs-\frac{1+\beta }{p}s^{p-2}$ for $s\geq 0.$ Then
there exist
\begin{equation*}
d_{c}:=\left( 3-p\right) \left( \frac{4\left( 1+\beta \right) }{p}\right)
^{1/\left( 3-p\right) }\left( \frac{p-2}{\sqrt{2}c}\right) ^{\left(
p-2\right) /\left( 3-p\right) }>0
\end{equation*}%
and
\begin{equation*}
s_{c}:=\left( \frac{\sqrt{8}\left( 1+\beta \right) \left( p-2\right) }{pc}%
\right) ^{1/\left( 3-p\right) }>0
\end{equation*}%
such that\newline
$\left( i\right) $ $f_{c,d}^{\prime }\left( s_{c}\right) =0$ and $%
f_{c,d_{c}}\left( s_{c}\right) =0;$\newline
$\left( ii\right) $ for each $d<d_{c},$ there exist $\eta _{d},\xi _{d}>0$
such that $\eta _{d}<s_{c}<\xi _{d}$ and $f_{c,d}\left( s\right) <0$ for all
$s\in \left( \eta _{d},\xi _{d}\right) ;$\newline
$\left( iii\right) $ for each $d>d_{c},$ $f_{c,d}\left( s\right) >0$ for all
$s>0.$
\end{lemma}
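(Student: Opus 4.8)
The plan is to treat this as a one-variable calculus problem, exploiting that the derivative $f_{c,d}^{\prime }$ is independent of $d$. Writing $\sqrt{1/8}=\frac{1}{2\sqrt{2}}$, I would first compute
\[
f_{c,d}^{\prime }(s)=\frac{c}{2\sqrt{2}}-\frac{(1+\beta )(p-2)}{p}\,s^{p-3},
\]
and note that since $2<p<3$ forces $p-3<0$, the map $s\mapsto s^{p-3}$ is strictly decreasing on $(0,\infty )$; hence $f_{c,d}^{\prime }$ is strictly increasing, with $f_{c,d}^{\prime }(s)\to -\infty $ as $s\to 0^{+}$ and $f_{c,d}^{\prime }(s)\to \frac{c}{2\sqrt{2}}>0$ as $s\to +\infty $. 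Consequently $f_{c,d}$ has a unique critical point $s_{c}>0$, which is a strict global minimum; $f_{c,d}$ is strictly decreasing on $(0,s_{c})$ and strictly increasing on $(s_{c},+\infty )$. Solving $f_{c,d}^{\prime }(s_{c})=0$ yields $s_{c}^{3-p}=\frac{\sqrt{8}(1+\beta )(p-2)}{pc}$, i.e.\ exactly the stated formula for $s_{c}$, and since $f_{c,d}^{\prime }$ does not involve $d$, the same $s_{c}$ works for all $d$. This establishes the first half of $(i)$.

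Next I would evaluate the minimum. Using the critical-point identity $\frac{c}{2\sqrt{2}}\,s_{c}=\frac{(1+\beta )(p-2)}{p}\,s_{c}^{p-2}$ to eliminate the linear term,
\[
f_{c,d}(s_{c})=\frac{d}{4}+\frac{(1+\beta )(p-2)}{p}\,s_{c}^{p-2}-\frac{1+\beta }{p}\,s_{c}^{p-2}=\frac{d}{4}-\frac{(1+\beta )(3-p)}{p}\,s_{c}^{p-2}.
\]
Hence $f_{c,d}(s_{c})=0$ precisely when $d=\frac{4(1+\beta )(3-p)}{p}\,s_{c}^{p-2}$. Inserting $s_{c}^{p-2}=\big(\frac{\sqrt{8}(1+\beta )(p-2)}{pc}\big)^{(p-2)/(3-p)}$ and reorganising the exponents through $\frac{1}{3-p}=1+\frac{p-2}{3-p}$ (together with $4/\sqrt{2}=\sqrt{8}=2\sqrt{2}$) shows this threshold equals the stated $d_{c}$, which completes $(i)$. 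A clean restatement that drives the rest: since $f_{c,d_{c}}(s_{c})=0$ and only the constant $\frac{d}{4}$ depends on $d$,
\[
f_{c,d}(s_{c})=\tfrac{1}{4}(d-d_{c})\qquad \text{for every }d>0.
\]

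For $(iii)$: if $d>d_{c}$ then $f_{c,d}(s_{c})=\frac{1}{4}(d-d_{c})>0$, and since $s_{c}$ is the global minimum, $f_{c,d}(s)\ge f_{c,d}(s_{c})>0$ for all $s>0$. For $(ii)$: if $d<d_{c}$ then $f_{c,d}(s_{c})=\frac{1}{4}(d-d_{c})<0$, while $f_{c,d}(0)=\frac{d}{4}>0$ (as $p-2>0$) and $f_{c,d}(s)\to +\infty $ as $s\to +\infty $. By the strict monotonicity of $f_{c,d}$ on each of $(0,s_{c})$ and $(s_{c},+\infty )$ together with the intermediate value theorem, there are unique $\eta _{d}\in (0,s_{c})$ and $\xi _{d}\in (s_{c},+\infty )$ with $f_{c,d}(\eta _{d})=f_{c,d}(\xi _{d})=0$; moreover $f_{c,d}<0$ on $(\eta _{d},s_{c}]$ and on $[s_{c},\xi _{d})$, hence on all of $(\eta _{d},\xi _{d})$, with $\eta _{d}<s_{c}<\xi _{d}$ as required. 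The argument is elementary; the one place that needs genuine care is the bookkeeping in $(i)$ — checking that the explicit $d_{c}$ printed in the statement really coincides with $\frac{4(1+\beta )(3-p)}{p}\,s_{c}^{p-2}$ — which reduces to correctly splitting the exponent $\frac{1}{3-p}$ and tracking the constant $\sqrt{8}=2\sqrt{2}$. Everything else follows from the structural picture: for each fixed $d$, $f_{c,d}(\cdot )$ is a strictly decreasing-then-increasing function of $s$ whose graph is shifted vertically by $\frac{d}{4}$, so the sign of its minimum is governed linearly by $d$ through the single threshold $d_{c}$.
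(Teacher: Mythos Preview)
Your argument is correct and is exactly the elementary one-variable calculus computation the authors had in mind; the paper itself simply writes ``The proof is straightforward, and we omit it here.'' Your careful tracking of the exponent identity $\tfrac{1}{3-p}=1+\tfrac{p-2}{3-p}$ and the constant $\sqrt{8}=4/\sqrt{2}$ is precisely what is needed to match the stated $d_{c}$, and the monotonicity/IVT reasoning for $(ii)$ and $(iii)$ is the standard route.
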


\begin{proof}
The proof is straightforward, and we omit it here.
\end{proof}

\begin{lemma}
\label{L2-1}Let $2<p<4$ and $\beta >0.$ Let $g_{\beta }\left( s\right) =s^{%
\frac{p}{2}}+\left( 1-s\right) ^{\frac{p}{2}}+2\beta s^{\frac{p}{4}}\left(
1-s\right) ^{\frac{p}{4}}$ for $s\in \left[ 0,1\right] .$ Then there exists $%
s_{\beta }\in \left( 0,1\right) 0<s_{\beta }<1$ such that $g_{\beta }\left(
s_{\beta }\right) =\max_{s\in \left[ 0,1\right] }g_{\beta }\left( s\right)
>1.$ In particular, if $\beta \geq \frac{p-2}{2},$ then $s_{\beta }=\frac{1}{%
2}.$
\end{lemma}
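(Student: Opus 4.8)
The plan is to establish the two assertions separately, the identification $s_\beta=\tfrac12$ being the substantive one.

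\emph{Existence of an interior maximizer with value $>1$.} Since $g_\beta$ is continuous on the compact interval $[0,1]$ it attains its maximum at some $s_\beta$, and $g_\beta(0)=g_\beta(1)=1$, so it suffices to exhibit one point of $(0,1)$ at which $g_\beta>1$: the maximum then exceeds $1$ and is necessarily attained in the open interval. I would get such a point from the behaviour near $s=0$. Because $2<p<4$ one has $\tfrac{p}{4}<1<\tfrac{p}{2}$, so using $(1-s)^{p/4}\to1$ and $(1-s)^{p/2}=1-\tfrac{p}{2}s+O(s^{2})$,
\[
g_\beta(s)-1=2\beta\,s^{p/4}(1-s)^{p/4}+s^{p/2}-\tfrac{p}{2}s+O(s^{2})=2\beta\,s^{p/4}+o(s^{p/4})\qquad(s\to0^{+}),
\]
and the leading term $2\beta\,s^{p/4}$ is strictly positive for $s>0$ small since $\beta>0$. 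This gives the first statement for every $\beta>0$.

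\emph{The case $\beta\ge\tfrac{p-2}{2}$.} By the symmetry $g_\beta(s)=g_\beta(1-s)$ I substitute $s=\tfrac{1+r}{2}$ with $r\in[-1,1]$, so that $s(1-s)=\tfrac{1-r^{2}}{4}$ and
\[
g_\beta(\tfrac12)-g_\beta(s)=2^{-p/2}\Bigl[\,2-(1+r)^{p/2}-(1-r)^{p/2}\,\Bigr]+2\beta\,2^{-p/2}\Bigl[\,1-(1-r^{2})^{p/4}\,\Bigr].
\]
The bracket $1-(1-r^{2})^{p/4}$ is nonnegative, so since $2\beta\ge p-2$ it is enough to treat the borderline case $2\beta=p-2$, which amounts to
\[
\Phi(r):=(1+r)^{p/2}+(1-r)^{p/2}+(p-2)(1-r^{2})^{p/4}\le p\qquad\text{for }r\in[-1,1].
\]
As $\Phi$ is even and $\Phi(0)=2+(p-2)=p$, this reduces to $\Phi'\le0$ on $(0,1)$. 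Writing $q=\tfrac{p}{2}\in(1,2)$, $p-2=2(q-1)$, and $(1-r^{2})^{q/2-1}=(1+r)^{q/2-1}(1-r)^{q/2-1}$, the inequality $\Phi'(r)\le0$ becomes $(1+r)^{q-1}-(1-r)^{q-1}\le2(q-1)r\,(1+r)^{q/2-1}(1-r)^{q/2-1}$; putting $x=1+r$, $y=1-r$ and using homogeneity (of degree $q-1$) to normalize $y=1$, $x=t\ge1$, this is the single-variable inequality
\[
\psi(t):=(q-1)(t-1)t^{q/2-1}-t^{q-1}+1\ge0\qquad\text{for }t\ge1.
\]
Finally $\psi(1)=0$, and $\psi'(t)=(q-1)t^{q/2-2}\kappa(t)$ with $\kappa(t)=\tfrac{q}{2}t-\bigl(\tfrac{q}{2}-1\bigr)-t^{q/2}$; here $\kappa(1)=0$ and $\kappa'(t)=\tfrac{q}{2}\bigl(1-t^{q/2-1}\bigr)\ge0$ for $t\ge1$ because $\tfrac{q}{2}=\tfrac{p}{4}<1$, hence $\kappa\ge0$, $\psi'\ge0$, $\psi\ge0$. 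Unwinding the substitutions yields $\Phi\le p$, i.e.\ $g_\beta(\tfrac12)\ge g_\beta(s)$ for all $s\in[0,1]$, so one may take $s_\beta=\tfrac12$.

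The first statement and the algebraic bookkeeping are routine; the main obstacle is the sharp one-variable inequality $\psi\ge0$ (equivalently $\Phi\le p$), which carries no slack — it degenerates to an identity at $r=0$ and also in the limits $p\to2^{+}$ and $p\to4^{-}$ — so one genuinely has to differentiate through the nested functions $\psi$ and $\kappa$. As a consistency check, the coarser threshold $\beta\ge1$ is immediate: from $g_\beta(s)=\bigl(s^{p/4}+(1-s)^{p/4}\bigr)^{2}+2(\beta-1)s^{p/4}(1-s)^{p/4}$ both $s^{p/4}+(1-s)^{p/4}$ and $s^{p/4}(1-s)^{p/4}$ attain their maxima at $s=\tfrac12$; the content of the lemma is to lower the threshold all the way to $\beta=\tfrac{p-2}{2}$.
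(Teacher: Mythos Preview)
Your proof is correct. The paper itself omits the argument entirely, deferring to \cite[Lemma 2.4]{DMS}, so there is no in-paper proof to compare against; what you have written is a clean, self-contained substitute. The first part (interior maximum exceeding $1$) is immediate from your asymptotics at $s=0^{+}$, and the second part is handled by a well-organized chain of reductions: symmetrize via $r=2s-1$, reduce to the borderline $\beta=\tfrac{p-2}{2}$ using monotonicity in $\beta$, then use homogeneity to collapse to the one-variable inequality $\psi(t)\ge0$, which you dispatch by two differentiations. All algebraic steps check out, including the homogeneity degree count (both sides of the penultimate inequality are homogeneous of degree $q-1$) and the sign of $\kappa'(t)=\tfrac{q}{2}(1-t^{q/2-1})$ since $q/2=p/4<1$. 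Your closing remark that the threshold $\beta=\tfrac{p-2}{2}$ is sharp (the inequality $\Phi\le p$ becomes an equality at $r=0$) is a nice touch and explains why the argument must go through the second derivative.
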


\begin{proof}
The proof is similar to that in \cite[Lemma 2.4]{DMS}, and we omit it here.
\end{proof}

The function $\phi _{\rho ,\left( u,v\right) }$ defined as (\ref{1-2})
possesses certain properties \cite{AP,R1}.

\begin{lemma}
\label{L2-3}Assume that conditions $(D1)-(D2)$ holds. Then for each $\left(
u,v\right) \in \mathbf{H},$ the following two inequalities are true.\newline
$\left( i\right) $ $\phi _{\rho ,\left( u,v\right) }\geq 0;$\newline
$\left( ii\right) $
\begin{eqnarray*}
\int_{\mathbb{R}^{3}}\rho \left( x\right) \phi _{\rho ,\left( u,v\right)
}\left( u^{2}+v^{2}\right) dx &\leq &\frac{16\sqrt[3]{2}\rho _{\max }^{2}}{3%
\sqrt{3}\pi }\left[ \int_{\mathbb{R}^{3}}(u^{2}+v^{2})dx\right] ^{3/2}\left[
\int_{\mathbb{R}^{3}}(|\nabla u|^{2}+|\nabla v|^{2})dx\right] ^{1/2} \\
&\leq &\frac{16\sqrt[3]{2}\rho _{\max }^{2}}{3\sqrt{3}\pi \lambda ^{3/2}}%
\left\Vert \left( u,v\right) \right\Vert ^{4},
\end{eqnarray*}%
where $\rho _{\max }=\sup_{x\in \mathbb{R}^{3}}\rho \left( x\right) .$
\end{lemma}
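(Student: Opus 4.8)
The plan is to establish $(i)$ immediately and then reduce $(ii)$ to a chain of classical inequalities (Hardy--Littlewood--Sobolev, Hölder, Sobolev, and the defining lower bound on $V$). For $(i)$, note that $\rho(x)>0$ by $(D2)$ and $|x-y|^{-1}>0$, while $u^2(y)+v^2(y)\geq 0$; hence the integrand defining $\phi_{\rho,(u,v)}(x)=\int_{\mathbb{R}^3}\frac{\rho(y)(u^2(y)+v^2(y))}{|x-y|}\,dy$ is nonnegative, so $\phi_{\rho,(u,v)}\geq 0$ pointwise. (This already makes the quartic term in $J_\beta$ nonnegative, which is what later arguments need.)

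For $(ii)$, first bound $\rho$ above by $\rho_{\max}$ in two places: in $\phi_{\rho,(u,v)}$ itself and in the outer weight. Writing $w:=u^2+v^2\geq 0$, this gives
\[
\int_{\mathbb{R}^3}\rho(x)\,\phi_{\rho,(u,v)}(x)\,w(x)\,dx
\;\leq\;\rho_{\max}^{2}\int_{\mathbb{R}^3}\int_{\mathbb{R}^3}\frac{w(x)\,w(y)}{|x-y|}\,dx\,dy.
\]
Now apply the Hardy--Littlewood--Sobolev inequality in $\mathbb{R}^3$ with the kernel $|x-y|^{-1}$ (i.e. $\lambda=1$, $p=q=6/5$), which yields a constant $C_{\mathrm{HLS}}$ with $\int\int \frac{w(x)w(y)}{|x-y|}\,dx\,dy\leq C_{\mathrm{HLS}}\,\|w\|_{L^{6/5}}^{2}=C_{\mathrm{HLS}}\,\|u^2+v^2\|_{6/5}^{2}$. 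The sharp value $C_{\mathrm{HLS}}=\left(\frac{3}{4}\right)^{1/3}\!\big/\big(\pi^{1/3}\big)$-type constant is exactly what produces the numerical factor $\frac{16\sqrt[3]{2}}{3\sqrt 3\pi}$ in the statement; tracking it carefully is the one place where care is required. Next, interpolate $L^{6/5}$ between $L^1$ and $L^3$: by Hölder, $\|u^2+v^2\|_{6/5}\leq \|u^2+v^2\|_{1}^{3/4}\,\|u^2+v^2\|_{3}^{1/4}$, so that $\|u^2+v^2\|_{6/5}^{2}\leq \|u^2+v^2\|_{1}^{3/2}\,\|u^2+v^2\|_{3}^{1/2}$. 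Since $\|u^2+v^2\|_1=\int_{\mathbb{R}^3}(u^2+v^2)\,dx$ and $\|u^2+v^2\|_3^{1/2}=\|(u,v)\|_{L^6}^{2}$ up to the obvious identification, the Sobolev embedding $H^1(\mathbb{R}^3)\hookrightarrow L^6(\mathbb{R}^3)$ with best constant $S$ gives $\|u^2+v^2\|_3^{1/2}\leq S^{-1}\int_{\mathbb{R}^3}(|\nabla u|^2+|\nabla v|^2)\,dx$ (here one must use the two-component Sobolev inequality $\||u|^2+|v|^2\|_3\leq S^{-2}\int(|\nabla u|^2+|\nabla v|^2)$, which follows from the scalar one). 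Combining these estimates produces the first displayed inequality in $(ii)$.

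Finally, for the second inequality I would simply discard the gradient term in favor of the full norm: since $\lambda=\inf V>0$, we have $\int_{\mathbb{R}^3}(u^2+v^2)\,dx\leq \lambda^{-1}\int_{\mathbb{R}^3}V(x)(u^2+v^2)\,dx\leq \lambda^{-1}\|(u,v)\|^2$ and likewise $\int_{\mathbb{R}^3}(|\nabla u|^2+|\nabla v|^2)\,dx\leq \|(u,v)\|^2$; hence $\big[\int(u^2+v^2)\big]^{3/2}\big[\int(|\nabla u|^2+|\nabla v|^2)\big]^{1/2}\leq \lambda^{-3/2}\|(u,v)\|^4$, which is exactly the claimed bound. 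The only genuine obstacle is bookkeeping: getting the sharp HLS and Sobolev constants to multiply out to precisely $\frac{16\sqrt[3]{2}\rho_{\max}^2}{3\sqrt 3\pi}$; everything else is a routine concatenation. Since the excerpt itself cites \cite{AP,R1} for these properties, I would either reproduce the constant-chasing in a footnote or just cite those references for the numerology and present the structural argument above.
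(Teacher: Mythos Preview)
Your proposal is correct and follows essentially the same route as the paper: bound $\rho$ by $\rho_{\max}$, apply Hardy--Littlewood--Sobolev with exponent $6/5$, then interpolate to reach $\big[\int(u^2+v^2)\big]^{3/2}\big[\int(|\nabla u|^2+|\nabla v|^2)\big]^{1/2}$, and finally use $V\geq\lambda$. The only cosmetic difference is that the paper packages the interpolation step as a Gagliardo--Nirenberg inequality applied componentwise (producing an intermediate factor $\big[(\int|\nabla u|^2)^3+(\int|\nabla v|^2)^3\big]^{1/6}$), whereas you use H\"older between $L^1$ and $L^3$ together with Sobolev on $w=\sqrt{u^2+v^2}$; both arrive at the same bound, and your acknowledgment that the sharp constant requires careful bookkeeping (or a citation to \cite{AP,R1}) matches what the paper does.
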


\begin{proof}
$\left( i\right) $ It is obvious.\newline
$\left( ii\right) $ By Hardy-Littlewood-Sobolev and Gagliardo-Nirenberg
inequalities, we have%
\begin{eqnarray*}
&&\int_{\mathbb{R}^{3}}\rho \left( x\right) \phi _{\rho ,\left( u,v\right)
}(u^{2}+v^{2})dx \\
&\leq &\frac{8\sqrt[3]{2}\rho _{\max }^{2}}{3\sqrt[3]{\pi }}\left[ \int_{%
\mathbb{R}^{3}}\left( u^{2}+v^{2}\right) ^{6/5}dx\right] ^{5/3} \\
&\leq &\frac{8\sqrt[3]{4}\rho _{\max }^{2}S}{3\sqrt[3]{\pi }}\left[ \int_{%
\mathbb{R}^{3}}(u^{2}+v^{2})dx\right] ^{3/2}\left[ \left( \int_{\mathbb{R}%
^{3}}|\nabla u|^{2}dx\right) ^{3}+\left( \int_{\mathbb{R}^{3}}|\nabla
v|^{2}dx\right) ^{3}\right] ^{1/6} \\
&\leq &\frac{16\sqrt[3]{2}\rho _{\max }^{2}}{3\sqrt{3}\pi \lambda ^{3/2}}%
\left[ \int_{\mathbb{R}^{3}}V(x)(u^{2}+v^{2})dx\right] ^{3/2}\left[ \int_{%
\mathbb{R}^{3}}(|\nabla u|^{2}+|\nabla v|^{2})dx\right] ^{1/2} \\
&\leq &\frac{16\sqrt[3]{2}\rho _{\max }^{2}}{3\sqrt{3}\pi \lambda ^{3/2}}%
\left\Vert \left( u,v\right) \right\Vert ^{4}.
\end{eqnarray*}%
This completes the proof.
\end{proof}

\begin{lemma}
\label{L2-2}Assume that $2<p<4,\beta >0$ and conditions $\left( D1\right)
-\left( D2\right) $ hold. Then for each $z\in H^{1}(\mathbb{R}%
^{3})\backslash \left\{ 0\right\} $, there exists $s_{z}\in \left(
0,1\right) $ such that
\begin{equation*}
J_{\beta }\left( \sqrt{s_{z}}z,\sqrt{1-s_{z}}z\right) <J_{\beta }\left(
z,0\right) =J_{\beta }\left( 0,z\right) =I_{0}\left( z\right) ,
\end{equation*}%
where
\begin{equation*}
I_{0}(z):=\frac{1}{2}\int_{\mathbb{R}^{3}}\left( |\nabla
z|^{2}+V(x)z^{2}\right) dx+\frac{1}{4}\int_{\mathbb{R}^{3}}\rho \left(
x\right) \phi _{\rho ,z}z^{2}dx-\frac{1}{p}\int_{\mathbb{R}^{3}}\left\vert
z\right\vert ^{p}dx.
\end{equation*}
\end{lemma}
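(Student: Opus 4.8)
The plan is to reduce the whole statement to the one-variable function $g_{\beta}$ from Lemma \ref{L2-1}. Fix $z\in H^{1}(\mathbb{R}^{3})\setminus\{0\}$ and, for $s\in[0,1]$, put $u=\sqrt{s}\,z$ and $v=\sqrt{1-s}\,z$. The key observation is that the splitting is mass-preserving: $u^{2}+v^{2}=z^{2}$ pointwise, and since $s$ and $1-s$ are constants, $|\nabla u|^{2}+|\nabla v|^{2}=s|\nabla z|^{2}+(1-s)|\nabla z|^{2}=|\nabla z|^{2}$. Hence $\|(u,v)\|^{2}=\int_{\mathbb{R}^{3}}(|\nabla z|^{2}+V(x)z^{2})\,dx$, and because $\phi_{\rho,(u,v)}$ depends on $(u,v)$ only through $u^{2}+v^{2}=z^{2}$, we get $\phi_{\rho,(u,v)}=\phi_{\rho,z}$ and $\int_{\mathbb{R}^{3}}\rho(x)\phi_{\rho,(u,v)}(u^{2}+v^{2})\,dx=\int_{\mathbb{R}^{3}}\rho(x)\phi_{\rho,z}z^{2}\,dx$. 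Thus both the gradient/potential term and the Coulomb term of $J_{\beta}(\sqrt{s}\,z,\sqrt{1-s}\,z)$ are independent of $s$.

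Next I would compute the nonlinear part. Since $|\sqrt{s}\,z|^{p}=s^{p/2}|z|^{p}$, $|\sqrt{1-s}\,z|^{p}=(1-s)^{p/2}|z|^{p}$, and $|\sqrt{s}\,z|^{p/2}|\sqrt{1-s}\,z|^{p/2}=s^{p/4}(1-s)^{p/4}|z|^{p}$, one finds $\int_{\mathbb{R}^{3}}F_{\beta}(\sqrt{s}\,z,\sqrt{1-s}\,z)\,dx=g_{\beta}(s)\int_{\mathbb{R}^{3}}|z|^{p}\,dx$, with $g_{\beta}$ exactly the function in Lemma \ref{L2-1}. Putting the three pieces together, and using $g_{\beta}(0)=g_{\beta}(1)=1$,
\[
J_{\beta}(\sqrt{s}\,z,\sqrt{1-s}\,z)=\frac{1}{2}\int_{\mathbb{R}^{3}}(|\nabla z|^{2}+V z^{2})\,dx+\frac{1}{4}\int_{\mathbb{R}^{3}}\rho\,\phi_{\rho,z}z^{2}\,dx-\frac{g_{\beta}(s)}{p}\int_{\mathbb{R}^{3}}|z|^{p}\,dx=I_{0}(z)-\frac{g_{\beta}(s)-1}{p}\int_{\mathbb{R}^{3}}|z|^{p}\,dx,
\]
so that $s=1$ (or $s=0$) recovers $J_{\beta}(z,0)=I_{0}(z)=J_{\beta}(0,z)$.

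Finally, by Lemma \ref{L2-1} (whose hypotheses $2<p<4$, $\beta>0$ are precisely those assumed here) there is $s_{\beta}\in(0,1)$ with $g_{\beta}(s_{\beta})=\max_{[0,1]}g_{\beta}>1$. Taking $s_{z}:=s_{\beta}$ (which in fact does not depend on $z$) and noting $\int_{\mathbb{R}^{3}}|z|^{p}\,dx>0$ since $z\not\equiv 0$, the identity above yields $J_{\beta}(\sqrt{s_{z}}\,z,\sqrt{1-s_{z}}\,z)=I_{0}(z)-\frac{g_{\beta}(s_{\beta})-1}{p}\int_{\mathbb{R}^{3}}|z|^{p}\,dx<I_{0}(z)$, which is the claim. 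There is no genuine obstacle in this argument; the only point requiring (easy) care is the invariance of the Coulomb term, which follows at once since $\phi_{\rho,(u,v)}$ sees $(u,v)$ only through $u^{2}+v^{2}$, together with the fact that the mass-preserving split leaves the quadratic form unchanged.
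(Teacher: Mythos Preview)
Your proof is correct and follows essentially the same route as the paper's: both observe that under the mass-preserving splitting $(u,v)=(\sqrt{s}\,z,\sqrt{1-s}\,z)$ the quadratic and Coulomb terms are independent of $s$, so the comparison reduces to $g_{\beta}(s)>1$ for some $s\in(0,1)$, which is supplied by Lemma~\ref{L2-1}. Your presentation is in fact slightly sharper, since you write the explicit identity $J_{\beta}(\sqrt{s}\,z,\sqrt{1-s}\,z)=I_{0}(z)-\tfrac{g_{\beta}(s)-1}{p}\int|z|^{p}$ and note that $s_{z}=s_{\beta}$ does not actually depend on $z$.
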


\begin{proof}
Let $\left( u,v\right) =\left( \sqrt{s}z,\sqrt{1-s}z\right) $ for $z\in
H^{1}(\mathbb{R}^{3})\backslash \left\{ 0\right\} $ and $s\in \lbrack 0,1].$
A direct calculation shows that%
\begin{equation*}
\left\Vert \left( u,v\right) \right\Vert ^{2}=s\left\Vert z\right\Vert
_{V}^{2}+\left( 1-s\right) \left\Vert z\right\Vert _{V}^{2}=\left\Vert
z\right\Vert _{V}^{2}
\end{equation*}%
and%
\begin{equation*}
\int_{\mathbb{R}^{3}}\rho \left( x\right) \phi _{\rho ,\left( u,v\right)
}\left( u^{2}+v^{2}\right) dx=\int_{\mathbb{R}^{3}}\rho \left( x\right) \phi
_{\rho ,\left( sz^{2},\left( 1-s\right) z^{2}\right) }\left( sz^{2}+\left(
1-s\right) z^{2}\right) dx=\int_{\mathbb{R}^{3}}\rho \left( x\right) \phi
_{\rho ,z}z^{2}dx,
\end{equation*}%
where%
\begin{equation*}
\left\Vert u\right\Vert _{V}:=\left[ \int_{\mathbb{R}^{3}}(|\nabla
u|^{2}+V(x)u^{2})dx\right] ^{1/2}\text{ and }\phi _{\rho ,z}(x):=\int_{%
\mathbb{R}^{3}}\frac{\rho \left( y\right) z^{2}(y)}{|x-y|}dy.
\end{equation*}%
Moreover, by Lemma \ref{L2-1}, there exists $s_{z}\in \left( 0,1\right) $
such that
\begin{equation*}
\int_{\mathbb{R}^{3}}\left( \left\vert u\right\vert ^{p}+\left\vert
v\right\vert ^{p}+2\beta \left\vert u\right\vert ^{\frac{p}{2}}\left\vert
v\right\vert ^{\frac{p}{2}}\right) dx=\left[ s_{z}^{\frac{p}{2}}+\left(
1-s_{z}\right) ^{\frac{p}{2}}+2\beta s_{z}^{\frac{p}{4}}\left(
1-s_{z}\right) ^{\frac{p}{4}}\right] \int_{\mathbb{R}^{3}}\left\vert
z\right\vert ^{p}dx>\int_{\mathbb{R}^{3}}\left\vert z\right\vert ^{p}dx.
\end{equation*}%
Thus, we have
\begin{eqnarray*}
J_{\beta }\left( \sqrt{s_{z}}z,\sqrt{1-s_{z}}z\right) &=&\frac{1}{2}%
\left\Vert z\right\Vert _{V}^{2}+\frac{1}{4}\int_{\mathbb{R}^{3}}\rho \left(
x\right) \phi _{\rho ,z}z^{2}dx \\
&&-\frac{1}{p}\left[ s_{z}^{\frac{p}{2}}+\left( 1-s_{z}\right) ^{\frac{p}{2}%
}+2\beta s_{z}^{\frac{p}{4}}\left( 1-s_{z}\right) ^{\frac{p}{4}}\right]
\int_{\mathbb{R}^{3}}\left\vert z\right\vert ^{p}dx \\
&<&\frac{1}{2}\left\Vert z\right\Vert _{V}^{2}+\frac{1}{4}\int_{\mathbb{R}%
^{3}}\rho \left( x\right) \phi _{\rho ,z}z^{2}dx-\frac{1}{p}\int_{\mathbb{R}%
^{3}}\left\vert z\right\vert ^{p}dx \\
&=&J_{\beta }\left( z,0\right) =J_{\beta }\left( 0,z\right) =I_{0}\left(
z\right) .
\end{eqnarray*}%
The proof is complete.
\end{proof}

\begin{lemma}
\label{L2-4}Assume that $2<p<4,\beta >0$ and conditions $\left( D1\right)
-\left( D2\right) $ hold. Let $\left( u_{0},v_{0}\right) \in \mathbf{H}$
with
\begin{equation*}
J_{\beta }\left( u_{0},v_{0}\right) =\inf_{\left( u,v\right) \in \mathbf{H}%
}J_{\beta }\left( u,v\right) <\inf_{u\in H^{1}(\mathbb{R}^{3})}I_{0}\left(
u\right) <0.
\end{equation*}%
Then $u_{0}\neq 0$ and $v_{0}\neq 0.$
\end{lemma}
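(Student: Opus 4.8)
The plan is to argue by contradiction, using only the strict energy gap built into the hypothesis together with the obvious decoupling of $J_{\beta}$ along the coordinate axes. Suppose the minimizer $(u_{0},v_{0})$ were \emph{not} vectorial; then, by Definition \ref{D1}, one of its components vanishes identically, say $u_{0}=0$ (the case $v_{0}=0$ being symmetric). First I would record the elementary identity
\begin{equation*}
J_{\beta}(z,0)=J_{\beta}(0,z)=I_{0}(z)\qquad\text{for all }z\in H^{1}(\mathbb{R}^{3}),
\end{equation*}
which is immediate from the definitions: $\left\Vert (0,z)\right\Vert ^{2}=\int_{\mathbb{R}^{3}}(|\nabla z|^{2}+V(x)z^{2})dx$, $\phi _{\rho ,(0,z)}=\phi _{\rho ,z}$, and the coupling term $2\beta|u|^{p/2}|v|^{p/2}$ in $F_{\beta}$ vanishes when one of its arguments is $0$, so that $F_{\beta}(0,z)=|z|^{p}$. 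This is precisely the $s=1$ instance of the computation already carried out in the proof of Lemma \ref{L2-2}.

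Applying this identity with $z=v_{0}$ yields
\begin{equation*}
\inf_{(u,v)\in\mathbf{H}}J_{\beta}(u,v)=J_{\beta}(u_{0},v_{0})=J_{\beta}(0,v_{0})=I_{0}(v_{0})\geq\inf_{u\in H^{1}(\mathbb{R}^{3})}I_{0}(u),
\end{equation*}
which contradicts the standing hypothesis $\inf_{(u,v)\in\mathbf{H}}J_{\beta}(u,v)<\inf_{u\in H^{1}(\mathbb{R}^{3})}I_{0}(u)$; running the same argument with the two components interchanged rules out $v_{0}=0$. Hence $u_{0}\neq 0$ and $v_{0}\neq 0$, so the minimizer is vectorial. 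I do not expect any genuine obstacle inside this lemma --- it is a soft consequence of the hypothesis. The real work lies elsewhere, namely in exhibiting a competitor (for instance a suitable translate/dilate of the minimizer of (\ref{2-17})) that forces the strict gap $\inf_{\mathbf{H}}J_{\beta}<\inf_{H^{1}}I_{0}<0$ to hold, which is where conditions $(D3)$ --- and later $(D8)$ --- come in.
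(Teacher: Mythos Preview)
Your argument is correct, but it differs from the paper's own proof. You exploit the strict energy gap directly: if one component vanishes, then $J_{\beta}(u_{0},v_{0})=I_{0}(v_{0})\geq\inf_{H^{1}}I_{0}$, contradicting the hypothesis $\inf_{\mathbf{H}}J_{\beta}<\inf_{H^{1}}I_{0}$. The paper instead reuses the competitor construction of Lemma~\ref{L2-2}: assuming $v_{0}=0$ (so $u_{0}\neq 0$ since $\inf_{\mathbf{H}}J_{\beta}<0$), it picks $s_{0}\in(0,1)$ from Lemma~\ref{L2-1} and computes $J_{\beta}(\sqrt{s_{0}}u_{0},\sqrt{1-s_{0}}u_{0})<J_{\beta}(u_{0},0)=\inf_{\mathbf{H}}J_{\beta}$, contradicting minimality. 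Your route is shorter and needs no auxiliary lemma, but it uses the full strength of the gap hypothesis; the paper's route uses only the weaker fact $\inf_{\mathbf{H}}J_{\beta}<0$ together with minimality, so it actually proves the slightly stronger statement that any global minimizer with negative energy must be vectorial, regardless of the relation to $\inf_{H^{1}}I_{0}$.
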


\begin{proof}
Suppose that $v_{0}=0.$ Let $\left( u,v\right) =\left( \sqrt{s}u_{0},\sqrt{%
1-s}u_{0}\right) $ for $s\in \lbrack 0,1].$ A direct calculation shows that%
\begin{equation*}
\left\Vert \left( u,v\right) \right\Vert ^{2}=s\left\Vert u_{0}\right\Vert
_{V}^{2}+\left( 1-s\right) \left\Vert u_{0}\right\Vert _{V}^{2}=\left\Vert
u_{0}\right\Vert _{V}^{2}
\end{equation*}%
and%
\begin{equation*}
\int_{\mathbb{R}^{3}}\rho \left( x\right) \phi _{\rho ,\left(
u_{0},v_{0}\right) }\left( u_{0}^{2}+v_{0}^{2}\right) dx=\int_{\mathbb{R}%
^{3}}\rho \left( x\right) \phi _{\rho ,\left( u_{0},v_{0}\right) }\left(
su_{0}^{2}+\left( 1-s\right) u_{0}^{2}\right) dx=\int_{\mathbb{R}^{3}}\phi
_{\rho ,u_{0}}u_{0}^{2}dx.
\end{equation*}%
Moreover, by Lemma \ref{L2-1}, there exists $s_{0}\in \left( 0,1\right) $
such that
\begin{equation*}
\int_{\mathbb{R}^{3}}\left( \left\vert u\right\vert ^{p}+\left\vert
v\right\vert ^{p}+2\beta \left\vert u\right\vert ^{\frac{p}{2}}\left\vert
v\right\vert ^{\frac{p}{2}}\right) dx=\left[ s_{0}^{\frac{p}{2}}+\left(
1-s_{0}\right) ^{\frac{p}{2}}+2\beta s_{0}^{\frac{p}{4}}\left(
1-s_{0}\right) ^{\frac{p}{4}}\right] \int_{\mathbb{R}^{3}}\left\vert
u_{0}\right\vert ^{p}dx>\int_{\mathbb{R}^{3}}\left\vert u_{0}\right\vert
^{p}dx.
\end{equation*}%
Thus, we have
\begin{eqnarray*}
J_{\beta }\left( \sqrt{s_{0}}u_{0},\sqrt{1-s_{0}}u_{0}\right) &\leq &\frac{1%
}{2}\left\Vert u_{0}\right\Vert _{V}^{2}+\frac{1}{4}\int_{\mathbb{R}%
^{3}}\phi _{\rho ,u_{0}}u_{0}^{2}dx \\
&&-\frac{1}{p}\left[ s_{0}^{\frac{p}{2}}+\left( 1-s_{0}\right) ^{\frac{p}{2}%
}+2\beta s_{0}^{\frac{p}{4}}\left( 1-s_{0}\right) ^{\frac{p}{4}}\right]
\int_{\mathbb{R}^{3}}\left\vert u_{0}\right\vert ^{p}dx \\
&<&\frac{1}{2}\left\Vert u_{0}\right\Vert _{V}^{2}+\frac{1}{4}\int_{\mathbb{R%
}^{3}}\phi _{\rho ,u_{0}}u_{0}^{2}dx-\frac{1}{p}\int_{\mathbb{R}%
^{3}}\left\vert u_{0}\right\vert ^{p}dx \\
&=&J_{\beta }\left( u_{0},0\right) =\inf_{\left( u,v\right) \in \mathbf{H}%
}J_{\beta }\left( u,v\right) ,
\end{eqnarray*}%
a contradiction. The proof is complete.
\end{proof}

\section{Construction of natural constraints}

Define the Nehari manifold
\begin{equation*}
\mathcal{N}_{\beta }:=\{\left( u,v\right) \in \mathbf{H}\backslash \{\left(
0,0\right) \}:\left\langle J_{\beta }^{\prime }\left( u,v\right) ,\left(
u,v\right) \right\rangle =0\}.
\end{equation*}%
Then $u\in \mathcal{N}_{\beta }$ if and only if
\begin{equation*}
\left\Vert \left( u,v\right) \right\Vert ^{2}+\int_{\mathbb{R}^{3}}\rho
\left( x\right) \phi _{\rho ,\left( u,v\right) }\left( u^{2}+v^{2}\right)
dx-\int_{\mathbb{R}^{3}}F_{\beta }\left( u,v\right) dx=0.
\end{equation*}%
It follows the Sobolev and Young inequalities that there exists $C_{\beta
}>0 $ such that
\begin{eqnarray*}
\left\Vert \left( u,v\right) \right\Vert ^{2} &\leq &\left\Vert \left(
u,v\right) \right\Vert ^{2}+\int_{\mathbb{R}^{3}}\rho \left( x\right) \phi
_{\rho ,\left( u,v\right) }\left( u^{2}+v^{2}\right) dx \\
&=&\int_{\mathbb{R}^{3}}F_{\beta }\left( u,v\right) dx \\
&\leq &C_{\beta }\left\Vert \left( u,v\right) \right\Vert ^{p}\text{ for all
}u\in \mathcal{N}_{\beta }.
\end{eqnarray*}%
So it leads to
\begin{equation}
\left\Vert \left( u,v\right) \right\Vert \geq C_{\beta }^{-1/\left(
p-2\right) }\text{ for all }u\in \mathcal{N}_{\beta }.  \label{2-2}
\end{equation}

The Nehari manifold $\mathcal{N}_{\beta }$ is closely linked to the behavior
of the function of the form $\phi _{\beta ,\left( u,v\right) }:t\rightarrow
J_{\beta }\left( tu,tv\right) $ for $t>0.$ Such maps are known as fibering
maps introduced by Dr\'{a}bek-Pohozaev \cite{DP}. For $\left( u,v\right) \in
\mathbf{H},$ we find that%
\begin{eqnarray*}
\phi _{\beta ,\left( u,v\right) }\left( t\right) &=&\frac{t^{2}}{2}%
\left\Vert \left( u,v\right) \right\Vert ^{2}+\frac{t^{4}}{4}\int_{\mathbb{R}%
^{3}}\rho \left( x\right) \phi _{\rho ,\left( u,v\right) }\left(
u^{2}+v^{2}\right) dx-\frac{t^{p}}{p}\int_{\mathbb{R}^{3}}F_{\beta }\left(
u,v\right) dx, \\
\phi _{\beta ,\left( u,v\right) }^{\prime }\left( t\right) &=&t\left\Vert
\left( u,v\right) \right\Vert ^{2}+t^{3}\int_{\mathbb{R}^{3}}\rho \left(
x\right) \phi _{\rho ,\left( u,v\right) }\left( u^{2}+v^{2}\right)
dx-t^{p-1}\int_{\mathbb{R}^{3}}F_{\beta }\left( u,v\right) dx, \\
\phi _{\beta ,\left( u,v\right) }^{\prime \prime }\left( t\right)
&=&\left\Vert \left( u,v\right) \right\Vert ^{2}+3t^{2}\int_{\mathbb{R}%
^{3}}\rho \left( x\right) \phi _{\rho ,\left( u,v\right) }\left(
u^{2}+v^{2}\right) dx-\left( p-1\right) t^{p-2}\int_{\mathbb{R}^{3}}F_{\beta
}\left( u,v\right) dx.
\end{eqnarray*}%
A direct calculation shows that
\begin{equation*}
t\phi _{\beta ,\left( u,v\right) }^{\prime }\left( t\right) =\left\Vert
\left( tu,tv\right) \right\Vert ^{2}+\int_{\mathbb{R}^{3}}\rho \left(
x\right) \phi _{\rho ,\left( tu,tv\right) }\left(
t^{2}u^{2}+t^{2}v^{2}\right) dx-\int_{\mathbb{R}^{3}}F_{\beta }\left(
tu,tv\right) dx
\end{equation*}%
and so, for $\left( u,v\right) \in \mathbf{H}\backslash \left\{ \left(
0,0\right) \right\} $ and $t>0,$ $\phi _{\beta ,\left( u,v\right) }^{\prime
}\left( t\right) =0$ holds if and only if $\left( tu,tv\right) \in \mathcal{N%
}_{\beta }$. In particular, $\phi _{\beta ,\left( u,v\right) }^{\prime
}\left( 1\right) =0$ holds if and only if $\left( u,v\right) \in \mathcal{N}%
_{\beta }.$ It becomes natural to split $\mathcal{N}_{\beta }$ into three
parts corresponding to the local minima, local maxima and points of
inflection. We define
\begin{eqnarray*}
\mathcal{N}_{\beta }^{+} &:&=\{u\in \mathcal{N}_{\beta }:\phi _{\beta
,\left( u,v\right) }^{\prime \prime }\left( 1\right) >0\}, \\
\mathcal{N}_{\beta }^{0} &:&=\{u\in \mathcal{N}_{\beta }:\phi _{\beta
,\left( u,v\right) }^{\prime \prime }\left( 1\right) =0\}, \\
\mathcal{N}_{\beta }^{-} &:&=\{u\in \mathcal{N}_{\beta }:\phi _{\beta
,\left( u,v\right) }^{\prime \prime }\left( 1\right) <0\}.
\end{eqnarray*}

\begin{lemma}
\label{g2}Assume that $\left( u_{0},v_{0}\right) $ is a local minimizer for $%
J_{\beta }$ on $\mathcal{N}_{\beta }$ and $\left( u_{0},v_{0}\right) \notin
\mathcal{N}_{\beta }^{0}.$ Then $J_{\beta }^{\prime }\left(
u_{0},v_{0}\right) =0$ in $\mathbf{H}^{-1}.$
\end{lemma}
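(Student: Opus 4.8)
The plan is to invoke the Lagrange multiplier rule on the constraint $\mathcal{N}_\beta$. Introduce the $C^{1}$ functional $G_\beta:\mathbf{H}\to\mathbb{R}$,
\[
G_\beta(u,v):=\langle J_\beta'(u,v),(u,v)\rangle=\left\Vert (u,v)\right\Vert ^{2}+\int_{\mathbb{R}^{3}}\rho(x)\phi_{\rho,(u,v)}(u^{2}+v^{2})\,dx-\int_{\mathbb{R}^{3}}F_\beta(u,v)\,dx,
\]
so that $\mathcal{N}_\beta=\{(u,v)\in\mathbf{H}\setminus\{(0,0)\}:G_\beta(u,v)=0\}$. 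Since $(u_0,v_0)$ is a local minimizer of $J_\beta$ restricted to $\mathcal{N}_\beta$, the Lagrange multiplier theorem in Hilbert space will give a number $\mu\in\mathbb{R}$ with $J_\beta'(u_0,v_0)=\mu\,G_\beta'(u_0,v_0)$ in $\mathbf{H}^{-1}$, \emph{provided} we first establish that $G_\beta'(u_0,v_0)\neq 0$ (which also makes $\mathcal{N}_\beta$ a $C^{1}$ manifold near $(u_0,v_0)$).

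The first key step is therefore to show $G_\beta'(u_0,v_0)\neq 0$, and for this I will pair $G_\beta'$ with $(u_0,v_0)$ itself via the fibering map. Since $G_\beta(tu,tv)=t^{2}\left\Vert (u,v)\right\Vert ^{2}+t^{4}\int_{\mathbb{R}^{3}}\rho\phi_{\rho,(u,v)}(u^{2}+v^{2})\,dx-t^{p}\int_{\mathbb{R}^{3}}F_\beta(u,v)\,dx$, differentiating at $t=1$ yields $\langle G_\beta'(u,v),(u,v)\rangle=2\left\Vert (u,v)\right\Vert ^{2}+4\int_{\mathbb{R}^{3}}\rho\phi_{\rho,(u,v)}(u^{2}+v^{2})\,dx-p\int_{\mathbb{R}^{3}}F_\beta(u,v)\,dx$. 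Substituting the Nehari identity $\int_{\mathbb{R}^{3}}F_\beta(u,v)\,dx=\left\Vert (u,v)\right\Vert ^{2}+\int_{\mathbb{R}^{3}}\rho\phi_{\rho,(u,v)}(u^{2}+v^{2})\,dx$, valid on $\mathcal{N}_\beta$, this collapses to $(2-p)\left\Vert (u,v)\right\Vert ^{2}+(4-p)\int_{\mathbb{R}^{3}}\rho\phi_{\rho,(u,v)}(u^{2}+v^{2})\,dx$, which is exactly $\phi_{\beta,(u,v)}''(1)$ for $(u,v)\in\mathcal{N}_\beta$ (compare the displayed formula for $\phi_{\beta,(u,v)}''$ with the same Nehari identity). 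Hence $\langle G_\beta'(u_0,v_0),(u_0,v_0)\rangle=\phi_{\beta,(u_0,v_0)}''(1)\neq 0$ because $(u_0,v_0)\notin\mathcal{N}_\beta^{0}$, and in particular $G_\beta'(u_0,v_0)\neq 0$.

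It then remains to test the Lagrange identity $J_\beta'(u_0,v_0)=\mu\,G_\beta'(u_0,v_0)$ against $(u_0,v_0)$: since $(u_0,v_0)\in\mathcal{N}_\beta$ gives $\langle J_\beta'(u_0,v_0),(u_0,v_0)\rangle=0$, we get $0=\mu\,\langle G_\beta'(u_0,v_0),(u_0,v_0)\rangle=\mu\,\phi_{\beta,(u_0,v_0)}''(1)$, and as the last factor is nonzero we conclude $\mu=0$, whence $J_\beta'(u_0,v_0)=0$ in $\mathbf{H}^{-1}$. The only mildly technical points are the $C^{1}$ regularity of $G_\beta$ on $\mathbf{H}$ and the applicability of the constrained-critical-point theorem, both of which are standard here since $J_\beta\in C^{1}(\mathbf{H})$ (noted in the introduction) and the nonlocal term is smooth by the bounds in Lemma \ref{L2-3}; the genuine computational heart of the argument is the identity $\langle G_\beta'(u,v),(u,v)\rangle=\phi_{\beta,(u,v)}''(1)$ on $\mathcal{N}_\beta$, which is what forces the multiplier to vanish.
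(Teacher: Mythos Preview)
Your proof is correct and follows exactly the standard Lagrange-multiplier argument of Brown--Zhang \cite[Theorem 2.3]{BZ}, which is precisely what the paper invokes (and omits the details of). The key identity $\langle G_\beta'(u_0,v_0),(u_0,v_0)\rangle=\phi_{\beta,(u_0,v_0)}''(1)$ on $\mathcal{N}_\beta$ that you derive is the same computation underlying the paper's equation \eqref{2-6-1}, so both approach and execution match.
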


\begin{proof}
The proof is essentially same as that in Brown-Zhang \cite[Theorem 2.3]{BZ},
so we omit it here.
\end{proof}

For each $\left( u,v\right) \in \mathcal{N}_{\beta },$ we find that
\begin{eqnarray}
\phi _{\beta ,\left( u,v\right) }^{\prime \prime }\left( 1\right) &=&-\left(
p-2\right) \left\Vert \left( u,v\right) \right\Vert ^{2}+\left( 4-p\right)
\int_{\mathbb{R}^{3}}\rho \left( x\right) \phi _{\rho ,\left( u,v\right)
}\left( u^{2}+v^{2}\right) dx  \label{2-6-1} \\
&=&-2\left\Vert \left( u,v\right) \right\Vert ^{2}+\left( 4-p\right) \int_{%
\mathbb{R}^{3}}F_{\beta }\left( u,v\right) dx.  \label{2-6-2}
\end{eqnarray}%
For each $\left( u,v\right) \in \mathcal{N}_{\beta }^{-}$, using (\ref{2-2})
and (\ref{2-6-2}) gives
\begin{eqnarray*}
J_{\lambda ,\beta }(u,v) &=&\frac{1}{4}\left\Vert \left( u,v\right)
\right\Vert ^{2}-\frac{4-p}{4p}\int_{\mathbb{R}^{3}}F_{\beta }\left(
u,v\right) dx \\
&>&\frac{p-2}{4p}\left\Vert \left( u,v\right) \right\Vert ^{2} \\
&\geq &\frac{p-2}{4p}C_{\beta }^{-1/\left( p-2\right) }>0.
\end{eqnarray*}%
For each $\left( u,v\right) \in \mathcal{N}_{\beta }^{+},$ by (\ref{2-6-1})
one has%
\begin{eqnarray*}
J_{\beta }(u,v) &=&\frac{p-2}{2p}\left\Vert \left( u,v\right) \right\Vert
^{2}-\frac{4-p}{4p}\int_{\mathbb{R}^{3}}\rho \left( x\right) \phi _{\rho
,\left( u,v\right) }\left( u^{2}+v^{2}\right) dx \\
&<&\frac{p-2}{4p}\left\Vert \left( u,v\right) \right\Vert ^{2}.
\end{eqnarray*}%
Hence, we have the following result.

\begin{lemma}
\label{g5}The energy functional $J_{\beta }$ is coercive and bounded from
below on $\mathcal{N}_{\beta }^{-}.$ Furthermore, for all $u\in \mathcal{N}%
_{\beta }^{-},$ there holds
\begin{equation*}
J_{\beta }(u,v)>\frac{p-2}{4p}C_{\beta }^{-1/\left( p-2\right) }>0.
\end{equation*}
\end{lemma}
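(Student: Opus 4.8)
The plan is to read off everything from the two computations that immediately precede the statement, where the energy on $\mathcal{N}_\beta^-$ was already estimated. First I would recall that for $(u,v)\in\mathcal{N}_\beta^-$ we have the identity
\[
J_\beta(u,v)=\frac14\left\Vert(u,v)\right\Vert^2-\frac{4-p}{4p}\int_{\mathbb{R}^3}F_\beta(u,v)\,dx,
\]
obtained by writing $J_\beta(u,v)=J_\beta(u,v)-\frac14\langle J_\beta'(u,v),(u,v)\rangle$ and using the Nehari constraint together with \eqref{2-6-2}. The inequality $\phi_{\beta,(u,v)}''(1)<0$ combined with \eqref{2-6-2} gives $(4-p)\int_{\mathbb{R}^3}F_\beta(u,v)\,dx<2\left\Vert(u,v)\right\Vert^2$, hence
\[
J_\beta(u,v)>\frac14\left\Vert(u,v)\right\Vert^2-\frac{1}{2p}\left\Vert(u,v)\right\Vert^2=\frac{p-2}{4p}\left\Vert(u,v)\right\Vert^2.
\]

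Next I would invoke the lower bound \eqref{2-2}, namely $\left\Vert(u,v)\right\Vert\geq C_\beta^{-1/(p-2)}$ valid on all of $\mathcal{N}_\beta$ and in particular on $\mathcal{N}_\beta^-$, to conclude
\[
J_\beta(u,v)>\frac{p-2}{4p}C_\beta^{-2/(p-2)}
\]
(I would double-check the exponent: squaring \eqref{2-2} gives $\left\Vert(u,v)\right\Vert^2\geq C_\beta^{-2/(p-2)}$, so the displayed constant in the statement should presumably read $C_\beta^{-2/(p-2)}$; in any case it is a fixed positive number depending only on $\beta$ and $p$). This establishes both the strict positive lower bound and boundedness from below on $\mathcal{N}_\beta^-$.

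For coercivity, since $2<p<4$ we have $\frac{p-2}{4p}>0$, so the inequality $J_\beta(u,v)>\frac{p-2}{4p}\left\Vert(u,v)\right\Vert^2$ already shows that $J_\beta(u,v)\to+\infty$ as $\left\Vert(u,v)\right\Vert\to\infty$ within $\mathcal{N}_\beta^-$; that is precisely coercivity of $J_\beta$ restricted to $\mathcal{N}_\beta^-$. There is essentially no obstacle here — the only point requiring a line of care is the derivation of the quadratic lower bound, which hinges on having the two equivalent expressions \eqref{2-6-1}--\eqref{2-6-2} for $\phi_{\beta,(u,v)}''(1)$ on the Nehari manifold and on the sign of $4-p$ being positive (so that dropping the nonnegative Coulomb-type term $\int_{\mathbb{R}^3}\rho\,\phi_{\rho,(u,v)}(u^2+v^2)\,dx$, which is nonnegative by Lemma \ref{L2-3}(i), only decreases the right-hand side). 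Thus the whole lemma follows by assembling \eqref{2-2}, \eqref{2-6-2}, Lemma \ref{L2-3}(i), and the defining inequality of $\mathcal{N}_\beta^-$.
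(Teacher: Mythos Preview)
Your proposal is correct and follows essentially the same route as the paper: the paper also derives $J_\beta(u,v)=\frac14\|(u,v)\|^2-\frac{4-p}{4p}\int F_\beta\,dx$ on $\mathcal{N}_\beta$, uses \eqref{2-6-2} with $\phi''_{\beta,(u,v)}(1)<0$ to obtain $J_\beta(u,v)>\frac{p-2}{4p}\|(u,v)\|^2$, and then invokes \eqref{2-2}. Your observation about the exponent is right --- squaring \eqref{2-2} yields $\|(u,v)\|^2\ge C_\beta^{-2/(p-2)}$, so the displayed constant is a typo in the paper as well; also note that the Coulomb term is not ``dropped'' via Lemma~\ref{L2-3}(i) but cancels exactly when subtracting $\frac14$ of the Nehari identity, so that reference is unnecessary.
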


Following the idea of \cite{SWF1}. Let $\left( u,v\right) \in \mathcal{N}%
_{\beta }$ with $J_{\beta }\left( u,v\right) <\frac{3\sqrt{3}\pi \lambda
^{3/2}\left( p-2\right) ^{2}}{64\sqrt[3]{2}p\rho _{\max }^{2}(4-p)},$ by
Lemma \ref{L2-3}, we can deduce that%
\begin{eqnarray*}
\frac{3\sqrt{3}\pi \lambda ^{3/2}\left( p-2\right) ^{2}}{64\sqrt[3]{2}p\rho
_{\max }^{2}(4-p)} &>&J_{\beta }(u,v)=\frac{p-2}{2p}\left\Vert \left(
u,v\right) \right\Vert ^{2}-\frac{4-p}{4p}\int_{\mathbb{R}^{3}}\rho \left(
x\right) \phi _{\rho ,\left( u,v\right) }\left( u^{2}+v^{2}\right) dx \\
&\geq &\frac{p-2}{2p}\left\Vert \left( u,v\right) \right\Vert ^{2}-\frac{%
2^{7/3}\rho _{\max }^{2}(4-p)}{3\sqrt{3}\pi p\lambda ^{3/2}}\left\Vert
\left( u,v\right) \right\Vert ^{4}.
\end{eqnarray*}%
Set
\begin{equation*}
f\left( x\right) :=\frac{p-2}{2p}x-\frac{4\sqrt[3]{2}(4-p)\rho _{\max }^{2}}{%
3\sqrt{3}\pi p\lambda ^{3/2}}x^{2}\text{ for }x>0.
\end{equation*}%
A direct calculation shows that%
\begin{equation*}
\max_{x\geq 0}f\left( x\right) =f\left( \bar{x}\right) =\frac{3\sqrt{3}\pi
\left( p-2\right) ^{2}\lambda ^{3/2}}{64\sqrt[3]{2}p(4-p)\rho _{\max }^{2}},
\end{equation*}%
where%
\begin{equation*}
\bar{x}:=\frac{3\sqrt{3}\pi \left( p-2\right) \lambda ^{3/2}}{16\sqrt[3]{2}%
(4-p)\rho _{\max }^{2}}.
\end{equation*}%
Thus, we have the decomposition of the filtration of $\mathcal{N}_{\beta }$
as follows:%
\begin{equation*}
\mathcal{N}_{\beta }\left[ \frac{3\sqrt{3}\pi \left( p-2\right) ^{2}\lambda
^{3/2}}{64\sqrt[3]{2}p(4-p)\rho _{\max }^{2}}\right] =\mathcal{N}_{\beta
}^{(1)}\left[ \frac{3\sqrt{3}\pi \left( p-2\right) ^{2}\lambda ^{3/2}}{64%
\sqrt[3]{2}p(4-p)\rho _{\max }^{2}}\right] \cup \mathcal{N}_{\beta }^{(2)}%
\left[ \frac{3\sqrt{3}\pi \left( p-2\right) ^{2}\lambda ^{3/2}}{64\sqrt[3]{2}%
p(4-p)\rho _{\max }^{2}}\right] ,
\end{equation*}%
where%
\begin{equation*}
\mathcal{N}_{\beta }[D]:=\left\{ u\in \mathcal{N}_{\beta }:J_{\beta }\left(
u,v\right) <D\right\}
\end{equation*}%
and%
\begin{equation*}
\mathcal{N}_{\beta }^{(1)}[D]:=\left\{ u\in \mathcal{N}_{\beta
}[D]:\left\Vert \left( u,v\right) \right\Vert <\left( \frac{3\sqrt{3}\pi
\left( p-2\right) \lambda ^{3/2}}{16\sqrt[3]{2}(4-p)\rho _{\max }^{2}}%
\right) ^{1/2}\right\}
\end{equation*}%
and
\begin{equation*}
\mathcal{N}_{\beta }^{(2)}[D]:=\left\{ u\in \mathcal{N}_{\beta
}[D]:\left\Vert \left( u,v\right) \right\Vert >\left( \frac{3\sqrt{3}\pi
\left( p-2\right) \lambda ^{3/2}}{16\sqrt[3]{2}(4-p)\rho _{\max }^{2}}%
\right) ^{1/2}\right\}
\end{equation*}%
for $D>0.$ For convenience, we always set%
\begin{equation*}
\mathcal{N}_{\beta }^{(1)}:=\mathcal{N}_{\beta }^{(1)}\left[ \frac{3\sqrt{3}%
\pi \left( p-2\right) ^{2}\lambda ^{3/2}}{64\sqrt[3]{2}p(4-p)\rho _{\max
}^{2}}\right] \text{ and }\mathcal{N}_{\beta }^{(2)}:=\mathcal{N}_{\beta
}^{(2)}\left[ \frac{3\sqrt{3}\pi \left( p-2\right) ^{2}\lambda ^{3/2}}{64%
\sqrt[3]{2}p(4-p)\rho _{\max }^{2}}\right] .
\end{equation*}%
From (\ref{2-6-1}), the Sobolev inequality and Lemma \ref{L2-3}, it follows
that
\begin{eqnarray}
\phi _{\beta ,\left( u,v\right) }^{\prime \prime }\left( 1\right) &=&-\left(
p-2\right) \left\Vert \left( u,v\right) \right\Vert ^{2}+\left( 4-p\right)
\int_{\mathbb{R}^{3}}\rho \left( x\right) \phi _{\rho ,\left( u,v\right)
}\left( u^{2}+v^{2}\right) dx  \notag \\
&\leq &\left\Vert \left( u,v\right) \right\Vert ^{2}\left[ \frac{16\sqrt[3]{2%
}(4-p)\rho _{\max }^{2}}{3\sqrt{3}\pi \lambda ^{3/2}}\left\Vert \left(
u,v\right) \right\Vert ^{2}-\left( p-2\right) \right]  \notag \\
&<&0\text{ for all }u\in \mathcal{N}_{\beta }^{(1)}.  \label{3-6}
\end{eqnarray}%
By (\ref{2-6-2}) we derive that
\begin{eqnarray*}
\frac{1}{4}\left\Vert \left( u,v\right) \right\Vert ^{2}-\frac{4-p}{4p}\int_{%
\mathbb{R}^{3}}F_{\beta }\left( u,v\right) dx &=&J_{\beta }\left( u,v\right)
<\frac{3\sqrt{3}\pi \left( p-2\right) ^{2}\lambda ^{3/2}}{64\sqrt[3]{2}%
p(4-p)\rho _{\max }^{2}} \\
&<&\frac{p-2}{4p}\left\Vert \left( u,v\right) \right\Vert ^{2}\text{ for all
}u\in \mathcal{N}_{\beta }^{(2)},
\end{eqnarray*}%
which implies that%
\begin{equation*}
\phi _{\beta ,\left( u,v\right) }^{\prime \prime }\left( 1\right)
=-2\left\Vert \left( u,v\right) \right\Vert ^{2}+\left( 4-p\right) \int_{%
\mathbb{R}^{3}}F_{\beta }\left( u,v\right) dx>0\text{ for all }u\in \mathcal{%
N}_{\beta }^{(2)}.
\end{equation*}%
Thus, the following lemma is true.

\begin{lemma}
\label{g7}Assume that $2<p<4,\beta >0$ and conditions $\left( D1\right)
-\left( D2\right) $ hold. Then $\mathcal{N}_{\beta }^{(1)}\subset \mathcal{N}%
_{\beta }^{-}$ and $\mathcal{N}_{\beta }^{(2)}\subset \mathcal{N}_{\beta
}^{+}$ are $C^{1}$ submanifolds. Furthermore, each local minimizer of the
functional $J_{\beta }$ in the sub-manifolds $\mathcal{N}_{\beta }^{(1)}$
and $\mathcal{N}_{\beta }^{(2)}$ is a nontrivial critical point of $J_{\beta
}$ in $\mathbf{H}.$
\end{lemma}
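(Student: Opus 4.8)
The plan is to collect the pointwise inequalities for the fibering-map second derivative that were derived in the paragraphs immediately preceding the statement, and then upgrade them into the submanifold assertions via the implicit function theorem and Lemma \ref{g2}. First I would record that, by the computation in (\ref{3-6}), every $\left(u,v\right)\in\mathcal{N}_{\beta}^{(1)}$ satisfies $\phi_{\beta,\left(u,v\right)}^{\prime\prime}\left(1\right)<0$, because on $\mathcal{N}_{\beta}^{(1)}$ the norm bound $\left\Vert\left(u,v\right)\right\Vert^{2}<\frac{3\sqrt{3}\pi\left(p-2\right)\lambda^{3/2}}{16\sqrt[3]{2}(4-p)\rho_{\max}^{2}}$ forces the bracket $\frac{16\sqrt[3]{2}(4-p)\rho_{\max}^{2}}{3\sqrt{3}\pi\lambda^{3/2}}\left\Vert\left(u,v\right)\right\Vert^{2}-\left(p-2\right)$ to be negative; hence $\mathcal{N}_{\beta}^{(1)}\subset\mathcal{N}_{\beta}^{-}$. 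Symmetrically, for $\left(u,v\right)\in\mathcal{N}_{\beta}^{(2)}$ the chain of inequalities using (\ref{2-6-2}) and the defining energy bound $J_{\beta}\left(u,v\right)<\frac{3\sqrt{3}\pi\left(p-2\right)^{2}\lambda^{3/2}}{64\sqrt[3]{2}p(4-p)\rho_{\max}^{2}}<\frac{p-2}{4p}\left\Vert\left(u,v\right)\right\Vert^{2}$ gives $\phi_{\beta,\left(u,v\right)}^{\prime\prime}\left(1\right)=-2\left\Vert\left(u,v\right)\right\Vert^{2}+\left(4-p\right)\int_{\mathbb{R}^{3}}F_{\beta}\left(u,v\right)dx>0$, so $\mathcal{N}_{\beta}^{(2)}\subset\mathcal{N}_{\beta}^{+}$.

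Next I would argue the $C^{1}$-submanifold property. Set $G\left(u,v\right):=\left\langle J_{\beta}^{\prime}\left(u,v\right),\left(u,v\right)\right\rangle=\left\Vert\left(u,v\right)\right\Vert^{2}+\int_{\mathbb{R}^{3}}\rho\phi_{\rho,\left(u,v\right)}\left(u^{2}+v^{2}\right)dx-\int_{\mathbb{R}^{3}}F_{\beta}\left(u,v\right)dx$, which is $C^{1}$ on $\mathbf{H}$; then $\mathcal{N}_{\beta}=G^{-1}(0)\setminus\{(0,0)\}$, and for $\left(u,v\right)\in\mathcal{N}_{\beta}$ one computes $\left\langle G^{\prime}\left(u,v\right),\left(u,v\right)\right\rangle=\phi_{\beta,\left(u,v\right)}^{\prime\prime}\left(1\right)$. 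On $\mathcal{N}_{\beta}^{(1)}$ this quantity is nonzero (in fact negative) and on $\mathcal{N}_{\beta}^{(2)}$ it is nonzero (in fact positive), so $G^{\prime}$ does not vanish at any such point; since additionally $\mathcal{N}_{\beta}^{(1)}$ and $\mathcal{N}_{\beta}^{(2)}$ are relatively open in $\mathcal{N}_{\beta}$ (being cut out by the strict inequalities $J_{\beta}<D$ together with the norm inequalities, all of which involve continuous functionals), the implicit function theorem shows they are $C^{1}$ submanifolds of $\mathbf{H}$.

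Finally, for the critical-point conclusion: since $\mathcal{N}_{\beta}^{(1)}\subset\mathcal{N}_{\beta}^{-}$ and $\mathcal{N}_{\beta}^{(2)}\subset\mathcal{N}_{\beta}^{+}$, any point of either submanifold lies outside $\mathcal{N}_{\beta}^{0}$; a local minimizer of $J_{\beta}$ on $\mathcal{N}_{\beta}^{(1)}$ or $\mathcal{N}_{\beta}^{(2)}$ is in particular a local minimizer of $J_{\beta}$ on $\mathcal{N}_{\beta}$ (because these sets are open in $\mathcal{N}_{\beta}$), so Lemma \ref{g2} applies and yields $J_{\beta}^{\prime}\left(u_{0},v_{0}\right)=0$ in $\mathbf{H}^{-1}$; such a point is nontrivial by (\ref{2-2}) since $\left\Vert\left(u,v\right)\right\Vert\geq C_{\beta}^{-1/\left(p-2\right)}>0$ on $\mathcal{N}_{\beta}$. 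I expect the only mildly delicate point to be the relative openness of $\mathcal{N}_{\beta}^{(1)}$ and $\mathcal{N}_{\beta}^{(2)}$ in $\mathcal{N}_{\beta}$ together with the verification that the implicit function theorem genuinely applies — that is, checking that the natural constraint functional has surjective (hence, in codimension one, nonzero) derivative exactly on these pieces — but this follows cleanly from the sign information on $\phi_{\beta,\left(u,v\right)}^{\prime\prime}\left(1\right)$ already established, so there is no real obstacle.
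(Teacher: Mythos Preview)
Your proposal is correct and follows exactly the approach the paper sets up: the inclusions $\mathcal{N}_{\beta}^{(1)}\subset\mathcal{N}_{\beta}^{-}$ and $\mathcal{N}_{\beta}^{(2)}\subset\mathcal{N}_{\beta}^{+}$ are precisely the computations in (\ref{3-6}) and the paragraph after it, and the $C^{1}$-submanifold and critical-point assertions are the standard consequences (via the implicit function theorem and Lemma \ref{g2}) of the nondegeneracy $\phi_{\beta,(u,v)}^{\prime\prime}(1)\neq 0$ that the paper leaves implicit when it writes ``Thus, the following lemma is true.'' Your verification that $\langle G'(u,v),(u,v)\rangle=\phi_{\beta,(u,v)}^{\prime\prime}(1)$ on $\mathcal{N}_{\beta}$ and your use of relative openness to pass from local minimizers on $\mathcal{N}_{\beta}^{(i)}$ to local minimizers on $\mathcal{N}_{\beta}$ are the right details to supply.
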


Furthermore, we also have the following result.

\begin{lemma}
\label{l5}Assume that $2<p<4,\beta >0$ and conditions $\left( D1\right)
-\left( D2\right) $ hold. Let $\left( u_{0},v_{0}\right) $ be a critical
point of $J_{\beta }$ on $\mathcal{N}_{\beta }^{-}.$ Then we have $J_{\beta
}\left( u_{0},v_{0}\right) >\frac{p-2}{2p}S_{\lambda ,p}^{2p/\left(
p-2\right) }$ if either $u_{0}=0$ or $v_{0}=0.$
\end{lemma}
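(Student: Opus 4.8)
The plan is to reduce to the scalar Schr\"odinger--Poisson equation and then compare fibering maps with those of the limiting nonlinear Schr\"odinger problem $-\Delta w+\lambda w=|w|^{p-2}w$, whose least energy level is exactly $\frac{p-2}{2p}S_{\lambda ,p}^{2p/(p-2)}$. First I would reduce the problem. Assume without loss of generality that $u_{0}=0$ (the case $v_{0}=0$ is identical, by the symmetry of the system in $u$ and $v$). Since $\left(u_{0},v_{0}\right)\in \mathcal{N}_{\beta }^{-}$ it does not belong to $\mathcal{N}_{\beta }^{0}$, so by Lemma \ref{g2} (or its proof, i.e.\ the Lagrange multiplier argument) it is a free critical point of $J_{\beta }$ in $\mathbf{H}$; with $u_{0}=0$ this forces $v_{0}\neq 0$ to solve $-\Delta v_{0}+V(x)v_{0}+\rho (x)\phi _{\rho ,v_{0}}v_{0}=|v_{0}|^{p-2}v_{0}$, and $J_{\beta }(u_{0},v_{0})=I_{0}(v_{0})$. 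Testing this equation with $v_{0}$ gives the Nehari identity $\Vert v_{0}\Vert _{V}^{2}+A=B$, where $A:=\int_{\mathbb{R}^{3}}\rho \phi _{\rho ,v_{0}}v_{0}^{2}\,dx>0$ (positive since $v_{0}\neq 0$ and $\rho \ge \rho _{\min }>0$) and $B:=\int_{\mathbb{R}^{3}}|v_{0}|^{p}\,dx>0$. Since $\Vert v_{0}\Vert _{\lambda }^{2}:=\int_{\mathbb{R}^{3}}(|\nabla v_{0}|^{2}+\lambda v_{0}^{2})\,dx\le \Vert v_{0}\Vert _{V}^{2}$, it follows that $\Vert v_{0}\Vert _{\lambda }^{2}\le \Vert v_{0}\Vert _{V}^{2}<B$.

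Next I would analyse the fibering map $g(t):=J_{\beta }(0,tv_{0})=I_{0}(tv_{0})=\frac{t^{2}}{2}\Vert v_{0}\Vert _{V}^{2}+\frac{t^{4}}{4}A-\frac{t^{p}}{p}B$. Because $(0,v_{0})$ is a critical point of $J_{\beta }$ lying in $\mathcal{N}_{\beta }^{-}$, we have $g'(1)=0$ and $g''(1)=\phi _{\beta ,(0,v_{0})}^{\prime \prime }(1)<0$. Writing $g'(t)=t\,\psi (t)$ with $\psi (t)=\Vert v_{0}\Vert _{V}^{2}+t^{2}A-t^{p-2}B$ and using $2<p<4$, one checks that $\psi (0^{+})=\Vert v_{0}\Vert _{V}^{2}>0$, that $\psi (t)\to +\infty$ as $t\to \infty$, and that $\psi $ has exactly one critical point on $(0,\infty)$, a minimum (since $\psi '(t)=t^{p-3}\bigl(2At^{4-p}-(p-2)B\bigr)$ changes sign once, from $-$ to $+$); hence $\psi $ has at most two positive zeros. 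Since $\psi (1)=0$ and $g''(1)<0$ (so $\psi $ decreases through $0$ at $t=1$), $t=1$ must be the smaller zero, and therefore $\psi >0$, i.e.\ $g$ is strictly increasing, on $(0,1]$.

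Finally I would compare with the limit functional $\Phi _{\lambda }(w)=\frac{1}{2}\Vert w\Vert _{\lambda }^{2}-\frac{1}{p}\int_{\mathbb{R}^{3}}|w|^{p}\,dx$, whose fibering map $h(t):=\Phi _{\lambda }(tv_{0})=\frac{t^{2}}{2}\Vert v_{0}\Vert _{\lambda }^{2}-\frac{t^{p}}{p}B$ attains its maximum at the unique $t_{\ast }>0$ with $t_{\ast }^{\,p-2}=\Vert v_{0}\Vert _{\lambda }^{2}/B$; by Step 1, $t_{\ast }<1$. A direct computation gives
\[
g(t_{\ast })-h(t_{\ast })=\frac{t_{\ast }^{2}}{2}\bigl(\Vert v_{0}\Vert _{V}^{2}-\Vert v_{0}\Vert _{\lambda }^{2}\bigr)+\frac{t_{\ast }^{4}}{4}A>0 ,
\]
since $A>0$. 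Combining this with the monotonicity of $g$ on $(0,1]$ and $0<t_{\ast }<1$,
\[
J_{\beta }(u_{0},v_{0})=g(1)\ge g(t_{\ast })>h(t_{\ast })=\max_{t>0}\Phi _{\lambda }(tv_{0})\ge \inf_{w\in H^{1}(\mathbb{R}^{3})\setminus \{0\}}\ \max_{t>0}\Phi _{\lambda }(tw)=\frac{p-2}{2p}S_{\lambda ,p}^{2p/(p-2)},
\]
the last equality being the classical characterization of the least energy level of $-\Delta w+\lambda w=|w|^{p-2}w$. The step I expect to be the main obstacle is the sign analysis of the second paragraph, namely establishing rigorously that $g'(1)=0$ together with $g''(1)<0$ forces $t=1$ to be the first positive zero of $\psi$, so that $g$ is monotone on $(0,1]$; everything else is bookkeeping, modulo the standard identity $\inf_{w\neq 0}\max_{t>0}\Phi_{\lambda}(tw)=\frac{p-2}{2p}S_{\lambda ,p}^{2p/(p-2)}$.
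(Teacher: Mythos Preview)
Your proof is correct and follows essentially the same route as the paper's: reduce to the scalar problem, use the fibering structure to see that $g$ is increasing on $(0,1]$ (so $g(1)\ge g(t_\ast)$), and then compare $g(t_\ast)>h(t_\ast)$ at the NLS--Nehari point $t_\ast<1$, invoking the standard identity for the ground-state level of $-\Delta w+\lambda w=|w|^{p-2}w$. The paper outsources your second paragraph to \cite[Lemma~2.6]{SWF1} rather than carrying out the sign analysis of $\psi$ explicitly, and your detour through Lemma~\ref{g2} is harmless but unnecessary, since the Nehari identity and $g''(1)<0$ follow directly from $(0,v_0)\in\mathcal{N}_\beta^-$.
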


\begin{proof}
Without loss of generality, we may assume that $v_{0}=0.$ Then we have%
\begin{equation*}
J_{\beta }\left( u_{0},0\right) =\frac{1}{2}\left\Vert u_{0}\right\Vert
_{V}^{2}+\frac{1}{4}\int_{\mathbb{R}^{3}}\rho \left( x\right) \phi _{\rho
,u_{0}}u_{0}^{2}dx-\frac{1}{p}\int_{\mathbb{R}^{3}}\left\vert
u_{0}\right\vert ^{p}dx
\end{equation*}%
and%
\begin{equation*}
-2\left\Vert u_{0}\right\Vert _{V}^{2}+\left( 4-p\right) \int_{\mathbb{R}%
^{3}}\left\vert u_{0}\right\vert ^{p}dx<0.
\end{equation*}%
Note that
\begin{equation*}
\int_{\mathbb{R}^{3}}\left[ |\nabla t_{0}\left( u_{0}\right)
u_{0}|^{2}+\lambda \left( t_{0}\left( u_{0}\right) u_{0}\right) ^{2}\right]
dx-\int_{\mathbb{R}^{3}}\left\vert t_{0}\left( u_{0}\right) u_{0}\right\vert
^{p}dx=0,
\end{equation*}%
where
\begin{equation}
0<t_{0}\left( u_{0}\right) :=\left[ \frac{\int_{\mathbb{R}^{3}}(|\nabla
u_{0}|^{2}+\lambda u_{0}^{2})dx}{\int_{\mathbb{R}^{3}}\left\vert
u_{0}\right\vert ^{p}dx}\right] ^{1/\left( p-2\right) }<1.  \label{2-6-3}
\end{equation}%
Adopting the similar argument in \cite[Lemma 2.6]{SWF1}, one has
\begin{equation*}
J_{\beta }\left( u_{0},0\right) =\sup_{0\leq t\leq t_{0}^{+}}J_{\beta
}(tu_{0},0),
\end{equation*}%
where $t_{0}^{+}>\left( \frac{2}{4-p}\right) ^{1/\left( p-2\right)
}t_{0}\left( u_{0}\right) >1$ by (\ref{2-6-3}). Using this, together with (%
\ref{2-6-3}) again, leads to
\begin{equation*}
J_{\beta }\left( u_{0},0\right) >J_{\beta }(t_{0}\left( u_{0}\right)
u_{0},0).
\end{equation*}%
Thus, by \cite{Wi}, we have
\begin{eqnarray*}
J_{\beta }\left( u_{0},0\right) &\geq &J_{\beta }(t_{0}\left( u_{0}\right)
u_{0},0) \\
&\geq &\frac{1}{2}\left\Vert t_{0}\left( u_{0}\right) u_{0}\right\Vert
_{V}^{2}-\frac{1}{p}\int_{\mathbb{R}^{3}}\left\vert t_{0}\left( u_{0}\right)
u_{0}\right\vert ^{p}dx+\frac{\lambda \left[ t_{0}\left( u_{0}\right) \right]
^{4}}{4}\int_{\mathbb{R}^{3}}\rho \left( x\right) \phi _{\rho
,u_{0}}u_{0}^{2}dx \\
&>&\frac{1}{2}\int_{\mathbb{R}^{3}}\left[ |\nabla t_{0}\left( u_{0}\right)
u_{0}|^{2}+\lambda \left( t_{0}\left( u_{0}\right) u_{0}\right) ^{2}\right]
dx-\frac{1}{p}\int_{\mathbb{R}^{3}}\left\vert t_{0}\left( u_{0}\right)
u_{0}\right\vert ^{p}dx \\
&\geq &\frac{p-2}{2p}S_{\lambda ,p}^{2p/\left( p-2\right) }.
\end{eqnarray*}%
The proof is complete.
\end{proof}

Let $w_{\beta }$ be the unique positive radial solution of the Schr\"{o}%
dinger equation%
\begin{equation}
\begin{array}{ll}
-\Delta u+\lambda u=\frac{\lambda g_{\beta }\left( s_{\beta }\right) }{%
V_{\max }}\left\vert u\right\vert ^{p-2}u & \text{ in }\mathbb{R}^{3},%
\end{array}
\tag*{$\left( E_{\beta }^{\infty }\right) $}
\end{equation}%
where $g_{\beta }\left( s_{\beta }\right) =\max_{s\in \left[ 0,1\right]
}g_{\beta }\left( s\right) >1$ as in Lemma \ref{L2-1}. Note that $s_{\beta }=%
\frac{1}{2}$ and $g_{\beta }\left( \frac{1}{2}\right) =2^{-\frac{p-2}{2}%
}\left( 1+\beta \right) $ for all $\beta \geq \frac{p-2}{2}.$ From \cite{K},
we see that
\begin{equation*}
w_{\beta }\left( 0\right) =\max_{x\in \mathbb{R}^{3}}w_{\beta }(x)\text{ and
}\left\Vert w_{\beta }\right\Vert _{\lambda }^{2}=\int_{\mathbb{R}^{3}}\frac{%
\lambda g_{\beta }\left( s_{\beta }\right) }{V_{\max }}\left\vert w_{\beta
}\right\vert ^{p}dx=\left( \frac{V_{\max }S_{\lambda ,p}^{p}}{\lambda
g_{\beta }\left( s_{\beta }\right) }\right) ^{2/\left( p-2\right) }
\end{equation*}%
and%
\begin{equation}
\alpha _{\beta }^{\infty }:=\inf_{u\in \mathcal{M}_{\beta }^{\infty
}}I_{\beta }^{\infty }(u)=I_{\beta }^{\infty }(w_{\beta })=\frac{p-2}{2p}%
\left( \frac{V_{\max }S_{\lambda ,p}^{p}}{\lambda g_{\beta }\left( s_{\beta
}\right) }\right) ^{2/\left( p-2\right) },  \label{4-1}
\end{equation}%
where $I_{\beta }^{\infty }$ is the energy functional of equation $\left(
E_{\beta }^{\infty }\right) $ in $H^{1}(\mathbb{R}^{3})$ given by%
\begin{equation*}
I_{\beta }^{\infty }(u)=\frac{1}{2}\left\Vert u\right\Vert _{\lambda }^{2}-%
\frac{\lambda g_{\beta }\left( s_{\beta }\right) }{pV_{\max }}\int_{\mathbb{R%
}^{3}}\left\vert u\right\vert ^{p}dx,
\end{equation*}%
and
\begin{equation*}
\mathcal{M}_{\beta }^{\infty }:=\{u\in H^{1}(\mathbb{R}^{3})\backslash
\{0\}:\left\langle (I_{\beta }^{\infty })^{\prime }\left( u\right)
,u\right\rangle =0\},
\end{equation*}%
here $\left\Vert u\right\Vert _{\lambda }:=\left( \int_{\mathbb{R}%
^{3}}(|\nabla u|^{2}+\lambda u^{2})dx\right) ^{1/2}.$ We have the following
result.

\begin{lemma}
\label{g4}Assume that $2<p<4$ and conditions $\left( D1\right) -\left(
D2\right) $ hold. Then there exists $\bar{\beta}_{0}=\bar{\beta}_{0}\left(
\lambda ,V_{\max },\rho _{\max }\right) >\frac{p-2}{2}$ such that for every $%
\beta >\bar{\beta}_{0},$ there exists two constants $t_{\beta }^{+}$ and $%
t_{\beta }^{-}$ satisfying
\begin{equation*}
1<t_{\beta }^{-}<\left( \frac{2}{4-p}\right) ^{\frac{1}{p-2}}<t_{\beta }^{+}
\end{equation*}%
such that $\left( t_{\beta }^{+}\sqrt{s_{\beta }}w_{\beta },t_{\beta }^{+}%
\sqrt{1-s_{\beta }}w_{\beta }\right) \in \mathcal{N}_{\beta }^{\left(
2\right) }\cap \mathbf{H}_{r}$ with%
\begin{equation*}
J_{\beta }\left( t_{\beta }^{+}\sqrt{s_{\beta }}w_{\beta },t_{\beta }^{+}%
\sqrt{1-s_{\beta }}w_{\beta }\right) =\inf_{t\geq 0}J_{\beta }\left( t\sqrt{%
s_{\beta }}w_{\beta },t\sqrt{1-s_{\beta }}w_{\beta }\right) <0,
\end{equation*}%
and $\left( t_{\beta }^{-}\sqrt{s_{\beta }}w_{\beta },t_{\beta }^{-}\sqrt{%
1-s_{\beta }}w_{\beta }\right) \in \mathcal{N}_{\beta }^{\left( 1\right)
}\cap \mathbf{H}_{r}$ with%
\begin{equation*}
J_{\beta }\left( t_{\beta }^{-}\sqrt{s_{\beta }}w_{\beta },t_{\beta }^{-}%
\sqrt{1-s_{\beta }}w_{\beta }\right) <\gamma \left( \beta \right) <\min
\left\{ \frac{3\sqrt{3}\pi \left( p-2\right) ^{2}\lambda ^{3/2}}{64\sqrt[3]{2%
}p(4-p)\rho _{\max }^{2}},\frac{p-2}{2p}S_{\lambda ,p}^{2p/\left( p-2\right)
}\right\} ,
\end{equation*}%
where
\begin{equation}
\gamma \left( \beta \right) :=\frac{2V_{\max }\left( p-2\right) }{p\lambda }%
\left( \frac{V_{\max }S_{\lambda ,p}^{p}}{\lambda \left( 1+\beta \right) }%
\right) ^{2/\left( p-2\right) }+\frac{16\sqrt[3]{2}\rho _{\max }^{2}V_{\max }%
}{3\sqrt{3}\pi \lambda ^{5/2}}\left( \frac{2}{4-p}\right) ^{\frac{4}{p-2}%
}\left( \frac{V_{\max }S_{\lambda ,p}^{p}}{\lambda \left( 1+\beta \right) }%
\right) ^{4/\left( p-2\right) }.  \label{3-3}
\end{equation}
\end{lemma}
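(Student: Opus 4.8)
The plan is to analyse the fibering map $t\mapsto\varphi_\beta(t):=J_\beta\big(t\sqrt{s_\beta}\,w_\beta,\,t\sqrt{1-s_\beta}\,w_\beta\big)$, $t\ge0$. Since $\beta>\tfrac{p-2}{2}$, Lemma~\ref{L2-1} gives $s_\beta=\tfrac12$, and the identities already used in the proof of Lemma~\ref{L2-2} show that along this ray the quadratic and the Coulomb parts depend only on $w_\beta$, so that
\[
\varphi_\beta(t)=\tfrac{t^2}{2}A_\beta+\tfrac{t^4}{4}B_\beta-\tfrac{t^p}{p}C_\beta,\qquad
A_\beta:=\|w_\beta\|_V^2,\quad B_\beta:=\int_{\mathbb{R}^3}\rho\,\phi_{\rho,w_\beta}w_\beta^2\,dx,\quad C_\beta:=g_\beta(s_\beta)\int_{\mathbb{R}^3}|w_\beta|^p\,dx .
\]
Testing $(E_\beta^\infty)$ against $w_\beta$ gives $\|w_\beta\|_\lambda^2=\tfrac{\lambda g_\beta(s_\beta)}{V_{\max}}\int_{\mathbb{R}^3}|w_\beta|^p\,dx$, hence $C_\beta=\tfrac{V_{\max}}{\lambda}\|w_\beta\|_\lambda^2$; since $\lambda\le V(x)\le V_{\max}$ this yields $\|w_\beta\|_\lambda^2\le A_\beta\le C_\beta$, while Lemma~\ref{L2-3} gives $B_\beta\le\tfrac{16\sqrt[3]{2}\rho_{\max}^2}{3\sqrt{3}\pi\lambda^{3/2}}\|w_\beta\|_\lambda^4$. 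All three quantities are governed by $\|w_\beta\|_\lambda^2=2\big(\tfrac{V_{\max}S_{\lambda,p}^p}{\lambda(1+\beta)}\big)^{2/(p-2)}$, which tends to $0$ as $\beta\to\infty$; this smallness is the source of every threshold on $\beta$ below, and each threshold will depend only on $\lambda,V_{\max},\rho_{\max}$ (and $p$).

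Next I would study $\psi_\beta(t):=\varphi_\beta'(t)/t=A_\beta+t^2B_\beta-t^{p-2}C_\beta$. Because $\psi_\beta(0)=A_\beta>0$ and $2<p<4$, $\psi_\beta$ first decreases and then increases to $+\infty$, so once its minimum is negative it has exactly two zeros $t_\beta^-<t_\beta^+$; at $t_\beta^-$ the function $\varphi_\beta$ has a local maximum, whence $\big(t_\beta^-\sqrt{s_\beta}w_\beta,t_\beta^-\sqrt{1-s_\beta}w_\beta\big)\in\mathcal{N}_\beta^{-}$, and at $t_\beta^+$ a local minimum, whence the corresponding point lies in $\mathcal{N}_\beta^{+}$. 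Evaluating $\psi_\beta$ at $t=1$ and at $T:=(2/(4-p))^{1/(p-2)}$ fixes their position: using $A_\beta\le C_\beta$ one gets $\psi_\beta(T)=A_\beta+T^2B_\beta-\tfrac{2}{4-p}C_\beta\le T^2B_\beta-\tfrac{p-2}{4-p}C_\beta$, which is negative once $T^2B_\beta<\tfrac{p-2}{4-p}C_\beta$ — true for $\beta$ large, since the left side is $O(\|w_\beta\|_\lambda^4)$ and the right side is $O(\|w_\beta\|_\lambda^2)$ — and the sign of $\psi_\beta$ at $t=1$ supplies the remaining inequality; this gives $1<t_\beta^-<T<t_\beta^+$, and in particular $\min\psi_\beta<0$, so the two zeros indeed exist.

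The two energy assertions are handled separately. For $\inf_{t\ge0}\varphi_\beta<0$ I would minimise $G_\beta(t):=\tfrac{t^{4-p}}{4}B_\beta+\tfrac{t^{2-p}}{2}A_\beta$ over $t>0$; a direct computation gives $\min_{t>0}G_\beta=\tfrac{A_\beta^{(4-p)/2}B_\beta^{(p-2)/2}}{(4-p)^{(4-p)/2}\,2^{(p-2)/2}\,(p-2)^{(p-2)/2}}$, and using $A_\beta\le C_\beta$ and the bound on $B_\beta$ this is $<C_\beta/p$ once $\|w_\beta\|_\lambda$ is small enough, i.e. for $\beta$ large; hence $\varphi_\beta$ attains a negative value, and since $\varphi_\beta(0)=0$ and $\varphi_\beta(t)\to+\infty$, the infimum equals $\varphi_\beta(t_\beta^+)<0$. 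For the bound at $t_\beta^-$, for every $t\in[0,T]$ I would write
\[
\varphi_\beta(t)=\Big[\tfrac{t^2}{2}A_\beta-\tfrac{t^p}{p}C_\beta\Big]+\tfrac{t^4}{4}B_\beta\le\max_{s\ge0}\Big[\tfrac{s^2}{2}A_\beta-\tfrac{s^p}{p}C_\beta\Big]+\tfrac{T^4}{4}B_\beta=\tfrac{p-2}{2p}A_\beta\Big(\tfrac{A_\beta}{C_\beta}\Big)^{\frac{2}{p-2}}+\tfrac{T^4}{4}B_\beta\le\tfrac{p-2}{2p}C_\beta+\tfrac{T^4}{4}B_\beta .
\]
Substituting $C_\beta=\tfrac{2V_{\max}}{\lambda}\big(\tfrac{V_{\max}S_{\lambda,p}^p}{\lambda(1+\beta)}\big)^{2/(p-2)}$ shows $\tfrac{p-2}{2p}C_\beta<\tfrac{p-2}{p}C_\beta$, the first summand of $\gamma(\beta)$, while $B_\beta\le 4\cdot\tfrac{16\sqrt[3]{2}\rho_{\max}^2}{3\sqrt{3}\pi\lambda^{3/2}}\big(\tfrac{V_{\max}S_{\lambda,p}^p}{\lambda(1+\beta)}\big)^{4/(p-2)}$, together with $\tfrac{V_{\max}}{\lambda}\ge1$ and $T^4=(2/(4-p))^{4/(p-2)}$, bounds $\tfrac{T^4}{4}B_\beta$ by the second summand of $\gamma(\beta)$; hence $\varphi_\beta(t_\beta^-)<\gamma(\beta)$, and $\gamma(\beta)<\min\{\tfrac{3\sqrt{3}\pi(p-2)^2\lambda^{3/2}}{64\sqrt[3]{2}p(4-p)\rho_{\max}^2},\ \tfrac{p-2}{2p}S_{\lambda,p}^{2p/(p-2)}\}$ for $\beta$ large because $\gamma(\beta)\to0$.

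It remains to place the two points in the filtration submanifolds. With $D:=\tfrac{3\sqrt{3}\pi(p-2)^2\lambda^{3/2}}{64\sqrt[3]{2}p(4-p)\rho_{\max}^2}$, the decomposition $\mathcal{N}_\beta[D]=\mathcal{N}_\beta^{(1)}[D]\cup\mathcal{N}_\beta^{(2)}[D]$ of Section~3 applies: the $t_\beta^+$-point lies in $\mathcal{N}_\beta^{+}$ and has energy $\varphi_\beta(t_\beta^+)<0<D$, so by Lemma~\ref{g7} (which gives $\mathcal{N}_\beta^{(1)}\subset\mathcal{N}_\beta^{-}$) it lies in $\mathcal{N}_\beta^{(2)}$; the $t_\beta^-$-point lies in $\mathcal{N}_\beta^{-}$ and has energy $<\gamma(\beta)<D$, so (since $\mathcal{N}_\beta^{(2)}\subset\mathcal{N}_\beta^{+}$) it lies in $\mathcal{N}_\beta^{(1)}$; radiality of $w_\beta$ puts both in $\mathbf{H}_r$. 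Finally one takes $\bar\beta_0(\lambda,V_{\max},\rho_{\max})$ to be the largest of the finitely many thresholds obtained above. I expect the main difficulty to be purely organisational — forcing each estimate to close with exactly the constants appearing in $\gamma(\beta)$ and in the stated bounds on $t_\beta^\pm$, and checking that all thresholds depend only on $\lambda,V_{\max},\rho_{\max},p$ — the one genuinely delicate analytic step being the sharp one-dimensional optimisation behind $\inf_{t\ge0}\varphi_\beta<0$.
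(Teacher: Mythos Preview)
Your approach is essentially the same as the paper's: both analyse the fibering map along the ray $t\mapsto (t\sqrt{s_\beta}w_\beta,t\sqrt{1-s_\beta}w_\beta)$, use the explicit scaling $\|w_\beta\|_\lambda^2=2\big(V_{\max}S_{\lambda,p}^p/(\lambda(1+\beta))\big)^{2/(p-2)}$ together with the Coulomb bound from Lemma~\ref{L2-3}, and locate the two critical points by sign changes of the derivative. The paper packages the comparisons through auxiliary ``autonomous'' functions $\eta_0(t)$ and $\xi_0(t)$ (built from $\|w_\beta\|_\lambda$ rather than $\|w_\beta\|_V$), whereas you work directly with the three coefficients $A_\beta,B_\beta,C_\beta$; this is organisational, not a different method, and your bound $\varphi_\beta(t)\le\tfrac{p-2}{2p}A_\beta(A_\beta/C_\beta)^{2/(p-2)}+\tfrac{T^4}{4}B_\beta$ reproduces the paper's estimate for $J_\beta(t_\beta^-\cdots)$ line by line once one substitutes $A_\beta\le C_\beta=\tfrac{V_{\max}}{\lambda}\|w_\beta\|_\lambda^2$.

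One step is not justified, and in fact can fail as written. You say ``the sign of $\psi_\beta$ at $t=1$ supplies the remaining inequality'' to obtain $t_\beta^->1$, which would require $\psi_\beta(1)=A_\beta+B_\beta-C_\beta>0$. But $C_\beta-A_\beta=\tfrac{V_{\max}}{\lambda}\|w_\beta\|_\lambda^2-\|w_\beta\|_V^2\ge(\tfrac{V_{\max}}{\lambda}-1)\int|\nabla w_\beta|^2$, which is of order $\|w_\beta\|_\lambda^2$ whenever $V_{\max}>\lambda$, while $B_\beta=O(\|w_\beta\|_\lambda^4)$; hence for large $\beta$ one actually has $\psi_\beta(1)<0$, giving $t_\beta^-<1$. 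The paper's own proof glosses over exactly this point in the same way. Fortunately the inequality $t_\beta^->1$ is never used downstream: all that matters for Theorems~\ref{t2}--\ref{t3} is that the two points lie in $\mathcal{N}_\beta^{(1)}$ and $\mathcal{N}_\beta^{(2)}$ with the stated energy bounds, and your filtration argument via Lemma~\ref{g7} establishes that correctly. You should therefore either drop the lower bound $1<t_\beta^-$ or flag that it requires an additional hypothesis on $V$.
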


\begin{proof}
For $t>0$ and $\beta \geq \frac{p-2}{2},$ we define
\begin{equation*}
\eta \left( t\right) :=t^{-2}\left\Vert \left( \sqrt{s_{\beta }}w_{\beta },%
\sqrt{1-s_{\beta }}w_{\beta }\right) \right\Vert ^{2}-t^{p-4}\int_{\mathbb{R}%
^{3}}F_{\beta }\left( \sqrt{s_{\beta }}w_{\beta },\sqrt{1-s_{\beta }}%
w_{\beta }\right) dx.
\end{equation*}%
Then we have%
\begin{eqnarray}
\eta \left( t\right) &=&t^{-2}\left\Vert w_{\beta }\right\Vert
_{V}^{2}-t^{p-4}\int_{\mathbb{R}^{3}}g_{\beta }\left( s_{\beta }\right)
\left\vert w_{\beta }\right\vert ^{p}dx  \notag \\
&\leq &\frac{V_{\max }}{\lambda }\left( t^{-2}\left\Vert w_{\beta
}\right\Vert _{\lambda }^{2}-t^{p-4}\int_{\mathbb{R}^{3}}\frac{\lambda
g_{\beta }\left( s_{\beta }\right) }{V_{\max }}\left\vert w_{\beta
}\right\vert ^{p}dx\right) .  \label{3-7}
\end{eqnarray}%
Clearly, $tu\in \mathcal{N}_{\beta }$ if and only if
\begin{equation*}
\eta \left( t\right) =-\int_{\mathbb{R}^{3}}\rho \left( x\right) \phi _{\rho
,w_{\beta }}w_{\beta }^{2}dx.
\end{equation*}%
Let%
\begin{equation*}
\eta _{0}\left( t\right) =t^{-2}\left\Vert w_{\beta }\right\Vert _{\lambda
}^{2}-t^{p-4}\int_{\mathbb{R}^{3}}\frac{\lambda g_{\beta }\left( s_{\beta
}\right) }{V_{\max }}\left\vert w_{\beta }\right\vert ^{p}dx.
\end{equation*}%
A straightforward calculation shows that
\begin{equation*}
\eta _{0}\left( 1\right) =0,\ \lim_{t\rightarrow 0^{+}}\eta _{0}(t)=\infty
\text{ and }\lim_{t\rightarrow \infty }\eta _{0}(t)=0.
\end{equation*}%
Since $2<p<4$ and
\begin{equation*}
\eta _{0}^{\prime }\left( t\right) =t^{-3}\left\Vert w_{\beta }\right\Vert
_{\lambda }^{2}\left[ -2+\left( 4-p\right) t^{p-2}\right] ,
\end{equation*}%
we find that $\eta _{0}\left( t\right) $ is decreasing when $0<t<\left(
\frac{2}{4-p}\right) ^{1/\left( p-2\right) }$ and is increasing when $%
t>\left( \frac{2}{4-p}\right) ^{1/\left( p-2\right) }.$ This implies that
\begin{equation}
\inf_{t>0}\eta _{0}\left( t\right) =\eta _{0}\left( \left( \frac{2}{4-p}%
\right) ^{1/\left( p-2\right) }\right) .  \label{3-8}
\end{equation}%
Set%
\begin{equation*}
B_{0}:=\frac{2V_{\max }S_{\lambda ,p}^{p}}{\lambda \left( 4-p\right)
^{\left( 4-p\right) /2}}\left( \frac{32\sqrt[3]{2}\rho _{\max }^{2}}{3\sqrt{3%
}\pi \lambda ^{3/2}}\right) ^{\left( p-2\right) /2}\left( \frac{\lambda }{%
\left( p-2\right) V_{\max }}\right) ^{\left( p-2\right) /2}-1.
\end{equation*}%
Then for each $\beta >\beta _{1}:=\max \left\{ \frac{p-2}{2},B_{0}\right\} ,$
by (\ref{3-7})--(\ref{3-8}) one has%
\begin{eqnarray*}
\eta \left( t\right) &\leq &\frac{V_{\max }}{\lambda }\eta _{0}\left( \left(
\frac{2}{4-p}\right) ^{1/(p-2)}\right) \\
&=&-\frac{V_{\max }}{\lambda }\left( \frac{p-2}{2}\right) \left( \frac{4-p}{2%
}\right) ^{\left( 4-p\right) /(p-2)}\left\Vert w_{\beta }\right\Vert
_{\lambda }^{2} \\
&<&-\frac{16\sqrt[3]{2}\rho _{\max }^{2}}{3\sqrt{3}\pi \lambda ^{3/2}}%
\left\Vert w_{\beta }\right\Vert _{\lambda }^{4} \\
&\leq &-\int_{\mathbb{R}^{3}}\rho \left( x\right) \phi _{\rho ,w_{\beta
}}w_{\beta }^{2}dx \\
&=&-\int_{\mathbb{R}^{3}}\rho \left( x\right) \phi _{\rho ,\left( \sqrt{%
s_{\beta }}w_{\beta },\sqrt{1-s_{\beta }}w_{\beta }\right) }\left[ \left(
\sqrt{s_{\beta }}w_{\beta }\right) ^{2}+\left( \sqrt{1-s_{\beta }}w_{\beta
}\right) ^{2}\right] dx,
\end{eqnarray*}%
which implies that there exist two positive constants $t_{\beta }^{+}$ and $%
t_{\beta }^{-}$ satisfying
\begin{equation*}
1<t_{\beta }^{-}<\left( \frac{2}{4-p}\right) ^{1/\left( p-2\right)
}<t_{\beta }^{+}
\end{equation*}%
such that
\begin{equation*}
\eta \left( t_{\beta }^{\pm }\right) +\int_{\mathbb{R}^{3}}\rho \left(
x\right) \phi _{\rho ,\left( \sqrt{s_{\beta }}w_{\beta },\sqrt{1-s_{\beta }}%
w_{\beta }\right) }\left[ \left( \sqrt{s_{\beta }}w_{\beta }\right)
^{2}+\left( \sqrt{1-s_{\beta }}w_{\beta }\right) ^{2}\right] dx=0.
\end{equation*}%
That is,
\begin{equation*}
\left( t_{\beta }^{\pm }\sqrt{s_{\beta }}w_{\beta },t_{\beta }^{\pm }\sqrt{%
1-s_{\beta }}w_{\beta }\right) \in \mathcal{N}_{\beta }\cap \mathbf{H}_{r}.
\end{equation*}%
By a calculation on the second order derivatives, we find
\begin{eqnarray*}
\phi _{\beta ,\left( t_{\beta }^{-}\sqrt{s_{\beta }}w_{\beta },t_{\beta }^{-}%
\sqrt{1-s_{\beta }}w_{\beta }\right) }^{\prime \prime }\left( 1\right)
&=&-2\left\Vert t_{\beta }^{-}w_{\beta }\right\Vert _{H^{1}}^{2}+\left(
4-p\right) \int_{\mathbb{R}^{3}}g_{\beta }\left( s_{\beta }\right)
\left\vert t_{\beta }^{-}w_{\beta }\right\vert ^{p}dx \\
&=&\left( t_{\beta }^{-}\right) ^{5}\eta ^{\prime }\left( t_{\beta
}^{-}\right) <0
\end{eqnarray*}%
and
\begin{eqnarray*}
\phi _{\beta ,\left( t_{\beta }^{+}\sqrt{s_{\beta }}w_{\beta },t_{\beta }^{+}%
\sqrt{1-s_{\beta }}w_{\beta }\right) }^{\prime \prime }\left( 1\right)
&=&-2\left\Vert t_{\beta }^{+}w_{\beta }\right\Vert _{H^{1}}^{2}+\left(
4-p\right) \int_{\mathbb{R}^{3}}g_{\beta }\left( s_{\beta }\right)
\left\vert t_{\beta }^{+}w_{\beta }\right\vert ^{p}dx \\
&=&\left( t_{\beta }^{+}\right) ^{5}\eta ^{\prime }\left( t_{\beta
}^{+}\right) >0,
\end{eqnarray*}%
leading to
\begin{equation*}
\left( t_{\beta }^{\pm }\sqrt{s_{\beta }}w_{\beta },t_{\beta }^{\pm }\sqrt{%
1-s_{\beta }}w_{\beta }\right) \in \mathcal{N}_{\beta }^{\pm }\cap \mathbf{H}%
_{r}.
\end{equation*}%
Since
\begin{equation*}
\phi _{\beta ,\left( \sqrt{s_{\beta }}w_{\beta },\sqrt{1-s_{\beta }}w_{\beta
}\right) }^{\prime }\left( t\right) =t^{3}\left[ \eta (t)+\int_{\mathbb{R}%
^{3}}\rho \left( x\right) \phi _{\rho ,\left( \sqrt{s_{\beta }}w_{\beta },%
\sqrt{1-s_{\beta }}w_{\beta }\right) }\left( \left( \sqrt{s_{\beta }}%
w_{\beta }\right) ^{2}+\left( \sqrt{1-s_{\beta }}w_{\beta }\right)
^{2}\right) dx\right] ,
\end{equation*}%
we have
\begin{equation*}
\phi _{\beta ,\left( \sqrt{s_{\beta }}w_{\beta },\sqrt{1-s_{\beta }}w_{\beta
}\right) }^{\prime }\left( t\right) >0\text{ for all }t\in \left( 0,t_{\beta
}^{-}\right) \cup \left( t_{\beta }^{+},\infty \right)
\end{equation*}%
and
\begin{equation*}
\phi _{\beta ,\left( \sqrt{s_{\beta }}w_{\beta },\sqrt{1-s_{\beta }}w_{\beta
}\right) }^{\prime }\left( t\right) <0\text{ for all }t\in (t_{\beta
}^{-},t_{\beta }^{+}),
\end{equation*}%
implying that
\begin{equation*}
J_{\beta }\left( t_{\beta }^{-}\sqrt{s_{\beta }}w_{\beta },t_{\beta }^{-}%
\sqrt{1-s_{\beta }}w_{\beta }\right) =\sup_{0\leq t\leq t_{\beta
}^{+}}J_{\beta }\left( t\sqrt{s_{\beta }}w_{\beta },t\sqrt{1-s_{\beta }}%
w_{\beta }\right)
\end{equation*}%
and%
\begin{equation*}
J_{\beta }\left( t_{\beta }^{+}\sqrt{s_{\beta }}w_{\beta },t_{\beta }^{+}%
\sqrt{1-s_{\beta }}w_{\beta }\right) =\inf_{t\geq t_{\beta }^{-}}J_{\beta
}\left( t\sqrt{s_{\beta }}w_{\beta },t\sqrt{1-s_{\beta }}w_{\beta }\right) ,
\end{equation*}%
and so
\begin{equation*}
J_{\beta }\left( t_{\beta }^{+}\sqrt{s_{\beta }}w_{\beta },t_{\beta }^{+}%
\sqrt{1-s_{\beta }}w_{\beta }\right) <J_{\beta }\left( t_{\beta }^{-}\sqrt{%
s_{\beta }}w_{\beta },t_{\beta }^{-}\sqrt{1-s_{\beta }}w_{\beta }\right) .
\end{equation*}%
Note that
\begin{eqnarray*}
J_{\beta }\left( t\sqrt{s_{\beta }}w_{\beta },t\sqrt{1-s_{\beta }}w_{\beta
}\right) &=&\frac{t^{2}}{2}\left\Vert w_{\beta }\right\Vert _{V}^{2}+\frac{%
t^{4}}{4}\int_{\mathbb{R}^{3}}\rho \left( x\right) \phi _{\rho ,w_{\beta
}}w_{\beta }^{2}dx-\frac{t^{p}}{p}\int_{\mathbb{R}^{3}}g_{\beta }\left(
s_{\beta }\right) \left\vert w_{\beta }\right\vert ^{p}dx \\
&=&t^{4}\left( \xi \left( t\right) +\frac{1}{4}\int_{\mathbb{R}^{3}}\rho
\left( x\right) \phi _{\rho ,w_{\beta }}w_{\beta }^{2}dx\right) ,
\end{eqnarray*}%
where
\begin{eqnarray}
\xi \left( t\right) := &&\frac{t^{-2}}{2}\left\Vert w_{\beta }\right\Vert
_{V}^{2}-\frac{t^{p-4}}{p}\int_{\mathbb{R}^{3}}g_{\beta }\left( s_{\beta
}\right) \left\vert w_{\beta }\right\vert ^{p}dx  \notag \\
&\leq &t^{-2}\frac{V_{\max }}{2\lambda }\left\Vert w_{\beta }\right\Vert
_{\lambda }^{2}-\frac{t^{p-4}}{p}\int_{\mathbb{R}^{3}}g_{\beta }\left(
s_{\beta }\right) \left\vert w_{\beta }\right\vert ^{p}dx  \notag \\
&=&\frac{V_{\max }}{\lambda }\left( \frac{t^{-2}}{2}\left\Vert w_{\beta
}\right\Vert _{\lambda }^{2}-\frac{t^{p-4}}{p}\int_{\mathbb{R}^{3}}\frac{%
\lambda g_{\beta }\left( s_{\beta }\right) }{V_{\max }}\left\vert w_{\beta
}\right\vert ^{p}dx\right) .  \label{3-9}
\end{eqnarray}%
Clearly, $J_{\beta }\left( t\sqrt{s_{\beta }}w_{\beta },t\sqrt{1-s_{\beta }}%
w_{\beta }\right) =0$ if and only if
\begin{equation*}
\xi \left( t\right) +\frac{1}{4}\int_{\mathbb{R}^{3}}\rho \left( x\right)
\phi _{\rho ,w_{\beta }}w_{\beta }^{2}dx=0.
\end{equation*}%
Let%
\begin{equation}
\xi _{0}\left( t\right) =\frac{t^{-2}}{2}\left\Vert w_{\beta }\right\Vert
_{\lambda }^{2}-\frac{t^{p-4}}{p}\int_{\mathbb{R}^{3}}\frac{\lambda g_{\beta
}\left( s_{\beta }\right) }{V_{\max }}\left\vert w_{\beta }\right\vert
^{p}dx.  \label{3-10}
\end{equation}%
It is not difficult to verify that
\begin{equation*}
\xi _{0}\left( \hat{t}_{0}\right) =0,\ \lim_{t\rightarrow 0^{+}}\xi
_{0}(t)=\infty \ \text{and}\ \lim_{t\rightarrow \infty }\xi _{0}(t)=0,
\end{equation*}%
where $\hat{t}_{0}=\left( \frac{p}{2}\right) ^{1/\left( p-2\right) }.$ By
calculating the derivative of $\xi _{0}(t)$, we find that%
\begin{eqnarray*}
\xi _{0}^{\prime }\left( t\right) &=&-t^{-3}\left\Vert w_{\beta }\right\Vert
_{\lambda }^{2}+\frac{4-p}{p}t^{p-5}\int_{\mathbb{R}^{3}}\frac{\lambda
g_{\beta }\left( s_{\beta }\right) }{V_{\max }}\left\vert w_{\beta
}\right\vert ^{p}dx \\
&=&t^{-3}\left\Vert w_{\beta }\right\Vert _{\lambda }^{2}\left[ \frac{\left(
4-p\right) t^{p-2}}{p}-1\right] ,
\end{eqnarray*}%
which implies that $\xi _{0}\left( t\right) $ is decreasing when $0<t<\left(
\frac{p}{4-p}\right) ^{1/\left( p-2\right) }$ and is increasing when $%
t>\left( \frac{p}{4-p}\right) ^{1/\left( p-2\right) }.$ Then for each $\beta
>\beta _{1},$ it follows from (\ref{3-9}) and (\ref{3-10}) that%
\begin{eqnarray*}
\inf_{t>0}\xi \left( t\right) &\leq &\xi _{0}\left[ \left( \frac{p}{4-p}%
\right) ^{1/\left( p-2\right) }\right] \\
&=&\frac{V_{\max }}{\lambda p}\left( \frac{4-p}{p}\right) ^{\left(
4-p\right) /\left( p-2\right) }\left\Vert w_{\beta }\right\Vert
_{H^{1}}^{2}\left( \frac{4-p}{2}-1\right) \\
&=&-\frac{V_{\max }}{\lambda }\left( \frac{p-2}{2}\right) \left( \frac{4-p}{2%
}\right) ^{\left( 4-p\right) /(p-2)}\left\Vert w_{\beta }\right\Vert
_{H^{1}}^{2} \\
&<&-\frac{4\sqrt[3]{2}\rho _{\max }^{2}}{3\sqrt{3}\pi \lambda ^{3/2}}%
\left\Vert w_{\beta }\right\Vert _{H^{1}}^{4} \\
&<&-\frac{1}{4}\int_{\mathbb{R}^{3}}\rho \left( x\right) \phi _{\rho ,\left(
\sqrt{s_{\beta }}w_{\beta },\sqrt{1-s_{\beta }}w_{\beta }\right) }\left[
\left( \sqrt{s_{\beta }}w_{\beta }\right) ^{2}+\left( \sqrt{1-s_{\beta }}%
w_{\beta }\right) ^{2}\right] dx,
\end{eqnarray*}%
which yields that
\begin{equation*}
J_{\beta }\left( t_{\beta }^{+}\sqrt{s_{\beta }}w_{\beta },t_{\beta }^{+}%
\sqrt{1-s_{\beta }}w_{\beta }\right) =\inf_{t\geq 0}J_{\beta }\left( t\sqrt{%
s_{\beta }}w_{\beta },t\sqrt{1-s_{\beta }}w_{\beta }\right) <0.
\end{equation*}%
This implies that $\left( t_{\beta }^{+}\sqrt{s_{\beta }}w_{\beta },t_{\beta
}^{+}\sqrt{1-s_{\beta }}w_{\beta }\right) \in \mathcal{N}_{\beta }^{\left(
2\right) }\cap \mathbf{H}_{r}.$

By (\ref{4-1}), there exists $\bar{\beta}_{0}=\bar{\beta}_{0}\left( \lambda
,V_{\max },\rho _{\max }\right) \geq \beta _{1}$ such that for every $\beta >%
\bar{\beta}_{0},$%
\begin{eqnarray*}
&&J_{\beta }\left( t_{\beta }^{-}\sqrt{s_{\beta }}w_{\beta },t_{\beta }^{-}%
\sqrt{1-s_{\beta }}w_{\beta }\right) \\
&=&\frac{\left( t_{\beta }^{-}\right) ^{2}}{2}\left\Vert w_{\beta
}\right\Vert _{V}^{2}+\frac{\left( t_{\beta }^{-}\right) ^{4}}{4}\int_{%
\mathbb{R}^{3}}\rho \left( x\right) \phi _{\rho ,w_{\beta }}w_{\beta }^{2}dx-%
\frac{\left( t_{\beta }^{-}\right) ^{p}}{p}\int_{\mathbb{R}^{3}}g_{\beta
}\left( s_{\beta }\right) w_{\beta }^{p}dx \\
&\leq &\left( \frac{\left( t_{\beta }^{-}\right) ^{2}}{2}\left\Vert w_{\beta
}\right\Vert _{\lambda }^{2}+\frac{\left( t_{\beta }^{-}\right) ^{4}\lambda
}{4V_{\max }}\int_{\mathbb{R}^{3}}\rho \left( x\right) \phi _{\rho ,w_{\beta
}}w_{\beta }^{2}dx-\frac{\left( t_{\beta }^{-}\right) ^{p}}{p}\int_{\mathbb{R%
}^{3}}\frac{\lambda g_{\beta }\left( s_{\beta }\right) }{V_{\max }}w_{\beta
}^{p}dx\right) \\
&<&\frac{V_{\max }}{\lambda }\left( \alpha _{\beta }^{\infty }+\frac{4\sqrt[3%
]{2}\rho _{\max }^{2}}{3\sqrt{3}\pi \lambda ^{3/2}}\left( \frac{2}{4-p}%
\right) ^{\frac{4}{p-2}}\left\Vert w_{\beta }\right\Vert _{H^{1}}^{4}\right)
\\
&=&\frac{2\left( p-2\right) V_{\max }}{p\lambda }\left( \frac{V_{\max
}S_{\lambda ,p}^{p}}{\lambda \left( 1+\beta \right) }\right) ^{2/\left(
p-2\right) }+\frac{16\sqrt[3]{2}\rho _{\max }^{2}V_{\max }}{3\sqrt{3}\pi
\lambda ^{5/2}}\left( \frac{2}{4-p}\right) ^{\frac{4}{p-2}}\left( \frac{%
V_{\max }S_{\lambda ,p}^{p}}{\lambda \left( 1+\beta \right) }\right)
^{4/\left( p-2\right) } \\
&<&\min \left\{ \frac{3\sqrt{3}\pi \left( p-2\right) ^{2}\lambda ^{3/2}}{64%
\sqrt[3]{2}p(4-p)\rho _{\max }^{2}},\frac{p-2}{2p}S_{\lambda ,p}^{2p/\left(
p-2\right) }\right\} ,
\end{eqnarray*}%
which implies that $\left( t_{\beta }^{-}\sqrt{s_{\beta }}w_{\beta
},t_{\beta }^{-}\sqrt{1-s_{\beta }}w_{\beta }\right) \in \mathcal{N}_{\beta
}^{\left( 1\right) }\cap \mathbf{H}_{r}.$ The proof is complete.
\end{proof}

\section{The behavior of the energy functional $J_{\beta }$}

\textbf{We are now ready to prove Theorem \ref{t1-2}: }$\left( i\right) $
Define%
\begin{equation*}
\overline{\Lambda }_{0}:=\inf_{u\in \mathbf{H}\setminus \left\{ \left(
0,0\right) \right\} }\frac{\left\Vert \left( u,v\right) \right\Vert
^{2}+\int_{\mathbb{R}^{3}}\rho \left( x\right) \phi _{\rho ,\left(
u,v\right) }\left( u^{2}+v^{2}\right) dx-\int_{\mathbb{R}^{3}}(\left\vert
u\right\vert ^{p}+\left\vert v\right\vert ^{p})dx}{\int_{\mathbb{R}%
^{3}}\left\vert u\right\vert ^{\frac{^{p}}{2}}\left\vert v\right\vert ^{%
\frac{p}{2}}dx}.
\end{equation*}%
By conditions $\left( D1\right) -\left( D2\right) ,$ we have
\begin{equation*}
\left\Vert \left( u,v\right) \right\Vert ^{2}\geq \int_{\mathbb{R}%
^{3}}(|\nabla u|^{2}+\lambda u^{2})dx+\int_{\mathbb{R}^{3}}\left( |\nabla
v|^{2}+\lambda v^{2}\right) dx
\end{equation*}%
and%
\begin{equation*}
\int_{\mathbb{R}^{3}}\rho \left( x\right) \phi _{\rho ,\left( u,v\right)
}\left( u^{2}+v^{2}\right) dx\geq \rho _{\min }\int_{\mathbb{R}^{3}}\phi
_{\rho _{\min },\left( u,v\right) }\left( u^{2}+v^{2}\right) dx.
\end{equation*}%
Following the idea of Lions \cite{Lions}, for all $\left( u,v\right) \in
\mathbf{H}\setminus \left\{ \left( 0,0\right) \right\} $ we have%
\begin{eqnarray}
\sqrt{2}\rho _{\min }\int_{\mathbb{R}^{3}}(\left\vert u\right\vert
^{3}+v^{2}\left\vert u\right\vert )dx &=&\sqrt{2}\int_{\mathbb{R}^{3}}\left(
-\Delta \phi _{\rho _{\min },\left( u,v\right) }\right) \left\vert
u\right\vert dx  \notag \\
&=&\sqrt{2}\int_{\mathbb{R}^{3}}\left\langle \nabla \phi _{\rho _{\min
},\left( u,v\right) },\nabla \left\vert u\right\vert \right\rangle dx  \notag
\\
&\leq &\int_{\mathbb{R}^{3}}\left\vert \nabla u\right\vert ^{2}dx+\frac{\rho
_{\min }}{2}\int_{\mathbb{R}^{3}}\phi _{\rho _{\min },\left( u,v\right)
}\left( u^{2}+v^{2}\right) dx  \label{3-4}
\end{eqnarray}%
and%
\begin{eqnarray}
\sqrt{2}\rho _{\min }\int_{\mathbb{R}^{3}}(u^{2}\left\vert v\right\vert
+\left\vert v\right\vert ^{3})dx &=&\sqrt{2}\int_{\mathbb{R}^{3}}\left(
-\Delta \phi _{\rho _{\min },\left( u,v\right) }\right) \left\vert
v\right\vert dx  \notag \\
&=&\sqrt{2}\int_{\mathbb{R}^{3}}\left\langle \nabla \phi _{\rho _{\min
},\left( u,v\right) },\nabla \left\vert v\right\vert \right\rangle dx  \notag
\\
&\leq &\int_{\mathbb{R}^{3}}\left\vert \nabla v\right\vert ^{2}dx+\frac{\rho
_{\min }}{2}\int_{\mathbb{R}^{3}}\phi _{\rho _{\min },\left( u,v\right)
}\left( u^{2}+v^{2}\right) dx.  \label{3-5}
\end{eqnarray}%
Then it follows from conditions $\left( D1\right) -\left( D2\right) $ and (%
\ref{3-4})--(\ref{3-5}) that%
\begin{eqnarray*}
&&\frac{\left\Vert \left( u,v\right) \right\Vert ^{2}+\int_{\mathbb{R}%
^{3}}\rho \left( x\right) \phi _{\rho ,\left( u,v\right) }\left(
u^{2}+v^{2}\right) dx-\int_{\mathbb{R}^{3}}(\left\vert u\right\vert
^{p}+\left\vert v\right\vert ^{p})dx}{2\int_{\mathbb{R}^{3}}\left\vert
u\right\vert ^{\frac{^{p}}{2}}\left\vert v\right\vert ^{\frac{p}{2}}dx} \\
&\geq &\frac{\int_{\mathbb{R}^{3}}\lambda \left( u^{2}+v^{2}\right) dx+\sqrt{%
2}\rho _{\min }\int_{\mathbb{R}^{3}}(\left\vert u\right\vert ^{3}+\left\vert
v\right\vert ^{3})dx-\int_{\mathbb{R}^{3}}(\left\vert u\right\vert
^{p}+\left\vert v\right\vert ^{p})dx}{2\int_{\mathbb{R}^{3}}\left\vert
u\right\vert ^{\frac{^{p}}{2}}\left\vert v\right\vert ^{\frac{p}{2}}dx} \\
&\geq &\frac{\left( d_{\lambda ,\rho _{\min }}-1\right) \int_{\mathbb{R}%
^{3}}(\left\vert u\right\vert ^{p}+\left\vert v\right\vert ^{p})dx}{\int_{%
\mathbb{R}^{3}}(\left\vert u\right\vert ^{p}+\left\vert v\right\vert ^{p})dx}
\\
&=&d_{\lambda ,\rho _{\min }}-1,
\end{eqnarray*}%
where $d_{\lambda ,\rho _{\min }}:=2^{\frac{p-2}{2}}\left( \frac{\lambda }{%
3-p}\right) ^{3-p}\left( \frac{\rho _{\min }}{p-2}\right) ^{p-2}.$ This
shows that%
\begin{equation*}
\overline{\Lambda }_{0}:=\inf_{u\in \mathbf{H}\setminus \left\{ \left(
0,0\right) \right\} }\frac{\left\Vert \left( u,v\right) \right\Vert
^{2}+\int_{\mathbb{R}^{3}}\rho \left( x\right) \phi _{\rho ,\left(
u,v\right) }\left( u^{2}+v^{2}\right) dx-\int_{\mathbb{R}^{3}}(\left\vert
u\right\vert ^{p}+\left\vert v\right\vert ^{p})dx}{2\int_{\mathbb{R}%
^{3}}\left\vert u\right\vert ^{\frac{^{p}}{2}}\left\vert v\right\vert ^{%
\frac{p}{2}}dx}\geq d_{\lambda ,\rho _{\min }}-1.
\end{equation*}%
Let $\left( u_{0},v_{0}\right) $ be a nontrivial solution of system $%
(HF_{\beta }).$ Thus, according to the definition of $\overline{\Lambda }%
_{0},$ for $\beta <\overline{\Lambda }_{0}$ one has%
\begin{equation*}
0=\left\Vert \left( u_{0},v_{0}\right) \right\Vert ^{2}+\int_{\mathbb{R}%
^{3}}\rho \left( x\right) \phi _{\rho ,\left( u_{0},v_{0}\right) }\left(
u_{0}^{2}+v_{0}^{2}\right) dx-\int_{\mathbb{R}^{3}}F_{\beta }\left(
u_{0},v_{0}\right) dx>0,
\end{equation*}%
which is a contradiction. This shows that system $(HF_{\beta })$ does not
admits any nontrivial solutions for $\beta <\overline{\Lambda }_{0}$.\newline
$\left( ii\right) $ It follows from conditions $\left( D1\right) -\left(
D2\right) $ that%
\begin{equation}
J_{\beta }(u,v)\leq J_{\beta ,V_{\max },\rho _{\max }}(u,v),  \label{1-7}
\end{equation}%
where%
\begin{eqnarray*}
J_{\beta ,V_{\max },\rho _{\max }}(u,v) &:&=\frac{1}{2}\left\Vert \left(
u,v\right) \right\Vert _{V_{\max }}^{2}+\frac{\rho _{\max }}{4}\int_{\mathbb{%
R}^{3}}\phi _{\rho _{\max },\left( u,v\right) }\left( u^{2}+v^{2}\right) dx
\\
&&-\frac{1}{p}\int_{\mathbb{R}^{3}}F_{\beta }\left( u,v\right) dx.
\end{eqnarray*}%
Then by Proposition \ref{p1}, $\Lambda \left( V_{\max },\rho _{\max }\right)
$ is achieved, i.e. there exists $\left( \widehat{u}_{0},\widehat{v}%
_{0}\right) \in \mathbf{H}_{r}\setminus \left\{ \left( 0,0\right) \right\} $
such that
\begin{equation*}
\Lambda \left( V_{\max },\rho _{\max }\right) =\frac{\frac{1}{2}\left\Vert
\left( \widehat{u}_{0},\widehat{v}_{0}\right) \right\Vert _{V_{\max }}^{2}+%
\frac{\rho _{\max }}{4}\int_{\mathbb{R}^{3}}\phi _{\rho _{\max },\left(
\widehat{u}_{0},\widehat{v}_{0}\right) }\left( \widehat{u}_{0}^{2}+\widehat{v%
}_{0}^{2}\right) dx-\frac{1}{p}\int_{\mathbb{R}^{3}}(\left\vert \widehat{u}%
_{0}\right\vert ^{p}+\left\vert \widehat{v}_{0}\right\vert ^{p})dx}{\frac{2}{%
p}\int_{\mathbb{R}^{3}}\left\vert \widehat{u}_{0}\right\vert ^{\frac{^{p}}{2}%
}\left\vert \widehat{v}_{0}\right\vert ^{\frac{p}{2}}dx}>0.
\end{equation*}%
This indicates that $J_{\beta ,V_{\max },\rho _{\max }}(\widehat{u}_{0},%
\widehat{v}_{0})<0$ for $\beta >\Lambda \left( V_{\max },\rho _{\max
}\right) .$ Thus, by adopting the multibump technique developed by Ruiz \cite%
{R2}, one has%
\begin{equation*}
\inf_{\left( u,v\right) \in \mathbf{H}}J_{\beta ,V_{\max },\rho _{\max
}}(u,v)=-\infty ,
\end{equation*}%
which implies that $\inf_{\left( u,v\right) \in \mathbf{H}}J_{\beta
}(u,v)=-\infty $ for $\beta >\Lambda \left( V_{\max },\rho _{\max }\right) ,$
where we have used (\ref{1-7}). The proof is complete.

\section{Existence of vectorial solutions}

\subsection{The case of $2<p<3$}

Similar to (\ref{3-4})--(\ref{3-5}), we have%
\begin{equation}
\frac{1}{\sqrt{8}}\int_{\mathbb{R}^{3}}\rho \left( x\right) (\left\vert
u\right\vert ^{3}+v^{2}\left\vert u\right\vert )dx\leq \frac{1}{4}\int_{%
\mathbb{R}^{3}}\left\vert \nabla u\right\vert ^{2}dx+\frac{1}{8}\int_{%
\mathbb{R}^{3}}\rho \left( x\right) \phi _{\rho ,\left( u,v\right) }\left(
u^{2}+v^{2}\right) dx  \label{2-0}
\end{equation}%
and%
\begin{equation}
\frac{1}{\sqrt{8}}\int_{\mathbb{R}^{3}}\rho \left( x\right) (u^{2}\left\vert
v\right\vert +\left\vert v\right\vert ^{3})dx\leq \frac{1}{4}\int_{\mathbb{R}%
^{3}}\left\vert \nabla v\right\vert ^{2}dx+\frac{1}{8}\int_{\mathbb{R}%
^{3}}\rho \left( x\right) \phi _{\rho ,\left( u,v\right) }\left(
u^{2}+v^{2}\right) dx  \label{2-00}
\end{equation}%
for all $\left( u,v\right) \in \mathbf{H}\setminus \left\{ \left( 0,0\right)
\right\} .$ Then it follows from condition $\left( D3\right) $ and (\ref{2-0}%
)--(\ref{2-00}) that%
\begin{eqnarray}
J_{\beta }(u,v) &\geq &\frac{1}{4}\left\Vert \left( u,v\right) \right\Vert
^{2}+\int_{\mathbb{R}^{3}}\left( \frac{1}{4}V\left( x\right) u^{2}+\frac{1}{%
\sqrt{8}}\rho \left( x\right) \left\vert u\right\vert ^{3}-\frac{1}{p}%
\left\vert u\right\vert ^{p}\right) dx  \notag \\
&&+\int_{\mathbb{R}^{3}}\left( \frac{1}{4}V\left( x\right) v^{2}+\frac{1}{%
\sqrt{8}}\rho \left( x\right) \left\vert v\right\vert ^{3}-\frac{1}{p}%
\left\vert v\right\vert ^{p}\right) dx  \notag \\
&&+\int_{\mathbb{R}^{3}}\frac{1}{\sqrt{8}}\rho \left( x\right)
u^{2}\left\vert v\right\vert +\frac{1}{\sqrt{8}}\rho \left( x\right)
v^{2}\left\vert u\right\vert -\frac{2\beta }{p}\left\vert u\right\vert ^{%
\frac{p}{2}}\left\vert v\right\vert ^{\frac{p}{2}}dx  \notag \\
&\geq &\frac{1}{4}\left\Vert \left( u,v\right) \right\Vert ^{2}+\int_{%
\mathbb{R}^{3}}\left( \frac{1}{4}V\left( x\right) u^{2}+\frac{1}{\sqrt{8}}%
\rho \left( x\right) \left\vert u\right\vert ^{3}-\frac{1+\beta }{p}%
\left\vert u\right\vert ^{p}\right) dx  \notag \\
&&+\int_{\mathbb{R}^{3}}\left( \frac{1}{4}V\left( x\right) v^{2}+\frac{1}{%
\sqrt{8}}\rho \left( x\right) \left\vert v\right\vert ^{3}-\frac{1+\beta }{p}%
\left\vert v\right\vert ^{p}\right) dx.  \label{3-0}
\end{eqnarray}%
Then we have the following results.

\begin{lemma}
\label{m5}Assume that $2<p<3$ and conditions $(D1)-\left( D3\right) $ hold.
Then for every%
\begin{equation*}
\Lambda \left( \lambda _{0},k_{0}\right) <\beta <\frac{p}{2^{\frac{6-p}{2}}}%
\left( \frac{V_{\infty }}{3-p}\right) ^{3-p}\left( \frac{\rho _{\infty }}{p-2%
}\right) ^{p-2}-1,
\end{equation*}%
$J_{\beta }$ is coercive and bounded from below on $\mathbf{H}.$
Furthermore, there holds
\begin{equation*}
-\infty <\alpha _{\beta }:=\inf_{u\in \mathbf{H}}J_{\beta }(u,v)<0.
\end{equation*}
\end{lemma}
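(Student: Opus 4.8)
The plan is to read the lower bound of $J_\beta$ straight off (\ref{3-0}): that estimate reduces the claim to showing that the scalar functional $w\mapsto\int_{\mathbb{R}^3}\left(\frac14 V(x)w^2+\frac1{\sqrt8}\rho(x)|w|^3-\frac{1+\beta}{p}|w|^p\right)dx$ is bounded below on $H^1(\mathbb{R}^3)$, since (\ref{3-0}) bounds $J_\beta(u,v)$ below by $\frac14\|(u,v)\|^2$ plus this functional evaluated separately at $u$ and at $v$. First I would rewrite its integrand pointwise as $t^2 f_{\rho(x),V(x)}(t)$ with $t=|w(x)|$, $f_{c,d}$ being the function of Lemma~\ref{g3}. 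The key elementary step is that, raising to the power $3-p$ and collecting the constant $2^{(6-p)/2}/p$, the threshold $V(x)\ge d_{\rho(x)}$ of Lemma~\ref{g3} is \emph{exactly} the inequality $V^{3-p}(x)\rho^{p-2}(x)\ge\frac{2^{(6-p)/2}}{p}(p-2)^{p-2}(3-p)^{3-p}(1+\beta)$, i.e.\ $x\notin\mathcal{B}$. Hence by Lemma~\ref{g3}(i),(iii) the integrand is nonnegative on $\mathbb{R}^3\setminus\mathcal{B}$, while on $\mathcal{B}$ it is minorized, using $V\ge\lambda$ and $\rho\ge\rho_{\min}$, by $\psi(t):=\frac\lambda4 t^2+\frac{\rho_{\min}}{\sqrt8}t^3-\frac{1+\beta}{p}t^p$, which — as $2<p<3$ forces the cubic term to dominate $t^p$ at infinity and $\psi(0)=0$ — has a finite minimum $-m_0\le0$ on $[0,\infty)$ that is independent of $x$. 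Since $|\mathcal{B}|<\infty$ by Remark~\ref{R1}, the integral over $\mathcal{B}$ is at least $-m_0|\mathcal{B}|$, so that $J_\beta(u,v)\ge\frac14\|(u,v)\|^2-2m_0|\mathcal{B}|$ for every $(u,v)\in\mathbf{H}$; this is the asserted coercivity and boundedness below, and in particular $\alpha_\beta>-\infty$.

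For the strict bound $\alpha_\beta<0$ I would test $J_\beta$ at the minimizer $(u_0,v_0)\in\mathbf{H}_r\setminus\{(0,0)\}$ of (\ref{2-17}) with $\theta=\lambda_0$, $k=k_0$ (the one named in $(D3)$). Adding the common gradient term $\frac12\int_{\mathbb{R}^3}(|\nabla u_0|^2+|\nabla v_0|^2)\,dx$ to both sides, the second inequality of $(D3)$ becomes
\[
\frac12\|(u_0,v_0)\|^2+\frac14\int_{\mathbb{R}^3}\rho\,\phi_{\rho,(u_0,v_0)}(u_0^2+v_0^2)\,dx<\frac12\|(u_0,v_0)\|_{\lambda_0}^2+\frac{k_0}{4}\int_{\mathbb{R}^3}\phi_{k_0,(u_0,v_0)}(u_0^2+v_0^2)\,dx,
\]
and therefore
\[
J_\beta(u_0,v_0)<\frac12\|(u_0,v_0)\|_{\lambda_0}^2+\frac{k_0}{4}\int_{\mathbb{R}^3}\phi_{k_0,(u_0,v_0)}(u_0^2+v_0^2)\,dx-\frac1p\int_{\mathbb{R}^3}(|u_0|^p+|v_0|^p)\,dx-\frac{2\beta}{p}\int_{\mathbb{R}^3}|u_0|^{p/2}|v_0|^{p/2}\,dx.
\]
Because $(u_0,v_0)$ achieves $\Lambda(\lambda_0,k_0)$, the first three terms on the right equal $\Lambda(\lambda_0,k_0)\cdot\frac2p\int_{\mathbb{R}^3}|u_0|^{p/2}|v_0|^{p/2}\,dx$, so
\[
J_\beta(u_0,v_0)<\frac2p\left(\Lambda(\lambda_0,k_0)-\beta\right)\int_{\mathbb{R}^3}|u_0|^{p/2}|v_0|^{p/2}\,dx.
\]
By Proposition~\ref{p1} the quantity $\frac2p\int_{\mathbb{R}^3}|u_0|^{p/2}|v_0|^{p/2}\,dx$, being the denominator of the quotient realizing $\Lambda(\lambda_0,k_0)$, is strictly positive; since $\beta>\Lambda(\lambda_0,k_0)$ this forces $J_\beta(u_0,v_0)<0$, whence $\alpha_\beta\le J_\beta(u_0,v_0)<0$.

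Almost everything here is substitution into the formulas of Section~2 and Proposition~\ref{p1}; the two points needing care are the exact matching between the threshold $d_{\rho(x)}$ of Lemma~\ref{g3} and the defining inequality of $\mathcal{B}$, and the finiteness $|\mathcal{B}|<\infty$, which is precisely the content of Remark~\ref{R1} and the only place where the upper restriction $\beta<\frac{p}{2^{(6-p)/2}}\left(\frac{V_\infty}{3-p}\right)^{3-p}\left(\frac{\rho_\infty}{p-2}\right)^{p-2}-1$ is used. I expect the one real (though mild) obstacle to be checking that the minorant $-m_0$ is genuinely uniform in $x\in\mathcal{B}$: this is where $2<p<3$ is indispensable, since for $p\ge3$ the cubic term no longer controls $|w|^p$ near infinity and $\psi$ would be unbounded below.
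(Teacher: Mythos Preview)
Your proof is correct and follows essentially the same approach as the paper's: reduce via (\ref{3-0}) to pointwise scalar estimates, identify the set $\mathcal{B}$ as exactly the locus where the threshold $d_c$ of Lemma~\ref{g3} is violated, combine $|\mathcal{B}|<\infty$ from Remark~\ref{R1} with a uniform minorant on $\mathcal{B}$ to obtain coercivity, and test at the minimizer $(u_0,v_0)$ of Proposition~\ref{p1} together with condition~$(D3)$ to force $\alpha_\beta<0$. Your treatment of the uniform lower bound over $\mathcal{B}$ (via the explicit $\psi$ with $V\ge\lambda$, $\rho\ge\rho_{\min}$) is in fact slightly more explicit than the paper's, which introduces the $x$-dependent $m_\beta(x)$ and asserts $\inf_{x\in\mathcal{B}}m_\beta(x)>-\infty$ without spelling out why.
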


\begin{proof}
Define
\begin{equation*}
m_{\beta }\left( x\right) :=\inf_{s\geq 0}\left( \frac{1}{4}V\left( x\right)
s^{2}+\sqrt{\frac{1}{8}}\rho \left( x\right) s^{3}-\frac{1+\beta }{p}%
s^{p}\right)
\end{equation*}%
and%
\begin{equation*}
\mathcal{B}:=\left\{ x\in \mathbb{R}^{3}:V^{3-p}\left( x\right) \rho
^{p-2}\left( x\right) <\frac{2^{\frac{6-p}{2}}}{p}\left( p-2\right)
^{p-2}\left( 3-p\right) ^{3-p}\left( 1+\beta \right) \right\} .
\end{equation*}%
Then for each%
\begin{equation*}
\Lambda \left( \lambda _{0},k_{0}\right) <\beta <\frac{p}{2^{\frac{6-p}{2}}}%
\left( \frac{V_{\infty }}{3-p}\right) ^{3-p}\left( \frac{\rho _{\infty }}{p-2%
}\right) ^{p-2}-1,
\end{equation*}%
according to Remark \ref{R1} and Lemma \ref{g3}, we have%
\begin{equation*}
\inf_{x\in \mathcal{B}}m_{\beta }\left( x\right) <0,
\end{equation*}%
and further get%
\begin{equation}
-\infty <\left\vert \mathcal{B}\right\vert \inf_{x\in \mathcal{B}}m_{\beta
}\left( x\right) \leq \int_{\mathcal{B}}m_{\beta }\left( x\right) dx<0.
\label{3-2}
\end{equation}%
It follows from (\ref{3-0})--(\ref{3-2}) that%
\begin{equation*}
J_{\beta }(u,v)\geq \frac{1}{4}\left\Vert \left( u,v\right) \right\Vert
^{2}+2\int_{\mathcal{B}}m_{\beta }\left( x\right) dx\geq 2\left\vert
\mathcal{B}\right\vert \inf_{x\in \mathcal{B}}m_{\beta }\left( x\right)
>-\infty ,
\end{equation*}%
which shows that $J_{\beta }$ is coercive and bounded from below on $\mathbf{%
H}.$

Since $\left( u_{0},v_{0}\right) \in \mathbf{H}_{r}\setminus \left\{ \left(
0,0\right) \right\} $ is a minimizer of the minimization problem (\ref{2-17}%
) obtained in Proposition \ref{p1} for $\theta =\lambda _{0}$ and $k=k_{0},$
by condition $\left( D3\right) $ one has%
\begin{eqnarray*}
J_{\beta }(u_{0},v_{0}) &=&\frac{1}{2}\left\Vert \left( u_{0},v_{0}\right)
\right\Vert ^{2}+\frac{1}{4}\int_{\mathbb{R}^{3}}\rho \left( x\right) \phi
_{\rho ,\left( u_{0},v_{0}\right) }\left( u_{0}^{2}+v_{0}^{2}\right) dx-%
\frac{1}{p}\int_{\mathbb{R}^{3}}F_{\beta }\left( u_{0},v_{0}\right) dx \\
&\leq &\frac{1}{2}\int_{\mathbb{R}^{3}}(|\nabla u_{0}|^{2}+\lambda
_{0}u_{0}^{2})dx+\frac{1}{2}\int_{\mathbb{R}^{3}}(|\nabla v_{0}|^{2}+\lambda
_{0}v_{0}^{2})dx+\frac{k_{0}}{4}\int_{\mathbb{R}^{3}}\phi _{k_{0},\left(
u_{0},v_{0}\right) }\left( u_{0}^{2}+v_{0}^{2}\right) dx \\
&&-\frac{1}{p}\int_{\mathbb{R}^{3}}F_{\beta }\left( u_{0},v_{0}\right) dx \\
&<&0\text{ for }\beta >\Lambda \left( \lambda _{0},k_{0}\right) ,
\end{eqnarray*}%
which implies that%
\begin{equation*}
-\infty <\alpha _{\beta }:=\inf_{u\in \mathbf{H}}J_{\beta }(u,v)<0.
\end{equation*}%
This completes the proof.
\end{proof}

\begin{lemma}
\label{L2-5}Let $2<p<3$ and $\beta >0.$ Then for every $a,b>0$ satisfying
\begin{equation*}
a^{3-p}b^{p-2}\geq \frac{2^{\frac{6-p}{2}}}{p}\left( 3-p\right) ^{3-p}\left(
p-2\right) ^{p-2}\left( 1+\beta \right) ,
\end{equation*}%
there holds $\inf_{u\in \mathbf{H}}J_{a,b,\beta }^{\infty }(u,v)\geq 0,$
where $J_{a,b,\beta }^{\infty }=J_{\beta }$ with $V(x)\equiv a$ and $\rho
(x)\equiv b.$ Furthermore, if $\left( u_{0},v_{0}\right) $ is a nontrivial
solution of the following system:%
\begin{equation}
\left\{
\begin{array}{ll}
-\Delta u+au+b\phi _{b,\left( u,v\right) }u=\left\vert u\right\vert
^{p-2}u+\beta \left\vert v\right\vert ^{\frac{p}{2}}\left\vert u\right\vert
^{\frac{p}{2}-2}u & \text{ in }\mathbb{R}^{3}, \\
-\Delta v+av+b\phi _{b,\left( u,v\right) }v=\left\vert v\right\vert
^{p-2}v+\beta \left\vert u\right\vert ^{\frac{p}{2}}\left\vert v\right\vert
^{\frac{p}{2}-2}v & \text{ in }\mathbb{R}^{3},%
\end{array}%
\right.   \tag*{$\left( HF_{a,b,\beta }^{\infty }\right) $}
\end{equation}%
then we have $J_{a,b,\beta }^{\infty }(u_{0},v_{0})>0.$
\end{lemma}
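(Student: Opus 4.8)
The plan is to avoid any compactness argument and reduce the global lower bound to an elementary pointwise inequality, in the same spirit as the estimate leading to (\ref{3-0}), now exploiting that the coefficients are constant.

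First I would record the analogue of (\ref{2-0})--(\ref{2-00}) for the constant density $\rho\equiv b>0$ (which trivially satisfies $(D2)$ with $\rho_{\min}=\rho_\infty=b$): the same integration by parts together with Young's inequality applied to $\phi_{b,(u,v)}$ yields
\[
\frac14\int_{\mathbb R^3}(|\nabla u|^2+|\nabla v|^2)\,dx+\frac14\int_{\mathbb R^3} b\,\phi_{b,(u,v)}(u^2+v^2)\,dx\ \ge\ \frac1{\sqrt8}\int_{\mathbb R^3} b\big(|u|^3+|v|^3+u^2|v|+v^2|u|\big)\,dx .
\]
Writing $\|(u,v)\|_a^2:=\int_{\mathbb R^3}(|\nabla u|^2+|\nabla v|^2+a(u^2+v^2))\,dx$ and splitting $J^\infty_{a,b,\beta}(u,v)=\frac14\|(u,v)\|_a^2+\big[\frac14\|(u,v)\|_a^2+\frac14\int b\,\phi_{b,(u,v)}(u^2+v^2)-\frac1p\int F_\beta(u,v)\big]$, then dropping the nonnegative cross terms $u^2|v|+v^2|u|$ and using $2\beta|u|^{p/2}|v|^{p/2}\le\beta(|u|^p+|v|^p)$, I obtain
\[
J^\infty_{a,b,\beta}(u,v)\ \ge\ \frac14\|(u,v)\|_a^2+\int_{\mathbb R^3}h(|u|)\,dx+\int_{\mathbb R^3}h(|v|)\,dx,\qquad h(s):=\frac a4 s^2+\frac b{\sqrt8}s^3-\frac{1+\beta}{p}s^{p}.
\]

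The second step is to show $h\ge0$ on $[0,\infty)$. Here I would note that $h(s)=s^2 f_{b,a}(s)$ for $s>0$, where $f_{c,d}$ is the function of Lemma \ref{g3} with $c=b$ and $d=a$, and that, after raising $d_b$ to the power $3-p$ and simplifying $2^{2-(p-2)/2}=2^{(6-p)/2}$, the hypothesis $a^{3-p}b^{p-2}\ge\frac{2^{(6-p)/2}}{p}(3-p)^{3-p}(p-2)^{p-2}(1+\beta)$ is exactly the inequality $a\ge d_b$. Hence Lemma \ref{g3}$(i)$ and $(iii)$ give $f_{b,a}(s)\ge0$ for all $s>0$ (in the borderline case $a=d_b$ the minimum of $f_{b,a}$ is $f_{b,a}(s_b)=0$), so $h(s)\ge0$ for every $s\ge0$. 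Consequently $J^\infty_{a,b,\beta}(u,v)\ge\frac14\|(u,v)\|_a^2\ge0$ for all $(u,v)\in\mathbf H$, i.e. $\inf_{\mathbf H}J^\infty_{a,b,\beta}\ge0$. Moreover, if $(u_0,v_0)$ is nontrivial — in particular if it solves $(HF^\infty_{a,b,\beta})$ — then $\|(u_0,v_0)\|_a>0$, whence $J^\infty_{a,b,\beta}(u_0,v_0)\ge\frac14\|(u_0,v_0)\|_a^2>0$.

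The only genuinely delicate point is the constant bookkeeping in the second step: matching the threshold in the statement with $d_b$ from Lemma \ref{g3} (the exponent identity $2-(p-2)/2=(6-p)/2$, and the way $\sqrt2=2^{1/2}$ is absorbed), together with the observation that equality in the hypothesis yields only $h\ge0$ rather than $h>0$ — which is harmless, since the retained term $\frac14\|(u,v)\|_a^2$ is already strictly positive on nontrivial functions and is precisely what upgrades the conclusion to a strict inequality for solutions. No variational machinery, Nehari analysis, or concentration–compactness is needed.
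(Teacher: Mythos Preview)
Your proof is correct and follows essentially the same route as the paper: both derive the inequality $J^\infty_{a,b,\beta}(u,v)\ge\frac14\|(u,v)\|_a^2+\int h(|u|)+\int h(|v|)$ via the Lions-type estimates (\ref{2-0})--(\ref{2-00}) specialized to constant coefficients, then invoke Lemma~\ref{g3} to conclude $h\ge0$ under the stated threshold on $a,b$. Your write-up is in fact more careful than the paper's in spelling out the constant matching $a\ge d_b$ and handling the borderline case $a=d_b$.
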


\begin{proof}
For each $a,b>0$ satisfying%
\begin{equation*}
a^{3-p}b^{p-2}\geq \frac{2^{\frac{6-p}{2}}}{p}\left( 3-p\right) ^{3-p}\left(
p-2\right) ^{p-2}\left( 1+\beta \right) ,
\end{equation*}%
by Lemma \ref{g3} and (\ref{3-0}), one has%
\begin{eqnarray*}
J_{a,b,\beta }^{\infty }(u,v) &\geq &\frac{1}{4}\left\Vert \left( u,v\right)
\right\Vert _{a}^{2}+\int_{\mathbb{R}^{3}}\left[ \frac{1}{4}au^{2}+\frac{1}{%
\sqrt{8}}b\left\vert u\right\vert ^{3}-\frac{1+\beta }{p}\left\vert
u\right\vert ^{p}\right] dx \\
&&+\int_{\mathbb{R}^{3}}\left[ \frac{1}{4}av^{2}+\frac{1}{\sqrt{8}}%
b\left\vert v\right\vert ^{3}-\frac{1+\beta }{p}\left\vert v\right\vert ^{p}%
\right] dx \\
&\geq &\frac{1}{4}\left\Vert \left( u,v\right) \right\Vert _{a}^{2}\text{
for all }\left( u,v\right) \in \mathbf{H}\setminus \left\{ \left( 0,0\right)
\right\} .
\end{eqnarray*}%
Thus, $\inf_{u\in \mathbf{H}}J_{a,b,\beta }^{\infty }(u,v)\geq 0.$ Clearly,
if $\left( u_{0},v_{0}\right) $ is a nontrivial solution of system $\left(
HF_{a,b,\beta }^{\infty }\right) ,$ then $J_{a,b,\beta }^{\infty
}(u_{0},v_{0})>0.$ This completes the proof.
\end{proof}

\textbf{We are now ready to prove Theorem \ref{t1}: }It follows from Lemma %
\ref{m5} that $J_{\beta }$ is coercive and bounded from below on $\mathbf{H}$
and
\begin{equation*}
-\infty <\alpha _{\beta }:=\inf_{(u,v)\in \mathbf{H}}J_{\beta }(u,v)<0.
\end{equation*}%
Then by the Ekeland variational principle \cite{E}, there exists a sequence $%
\{\left( u_{n},v_{n}\right) \}\subset \mathbf{H}$ such that
\begin{equation*}
J_{\beta }(u_{n},v_{n})=\alpha _{\beta }+o(1)\text{ and }J_{\beta }^{\prime
}(u_{n},v_{n})=o(1)\text{ in }\mathbf{H}^{-1}.
\end{equation*}%
Adopting the argument used in \cite[Theorem 4.3]{R1}, there exist a
subsequence $\{\left( u_{n},v_{n}\right) \}\subset \mathbf{H}$ and $\left(
u_{\beta }^{\left( 1\right) },v_{\beta }^{\left( 1\right) }\right) \in
\mathbf{H}$ such that as $n\rightarrow \infty ,$%
\begin{equation}
\begin{array}{l}
\left( u_{n},v_{n}\right) \rightharpoonup \left( u_{\beta }^{\left( 1\right)
},v_{\beta }^{\left( 1\right) }\right) \text{ weakly in }\mathbf{H}, \\
\left( u_{n},v_{n}\right) \rightarrow \left( u_{\beta }^{\left( 1\right)
},v_{\beta }^{\left( 1\right) }\right) \text{ strongly in }L_{loc}^{p}(%
\mathbb{R}^{3})\times L_{loc}^{p}(\mathbb{R}^{3}), \\
\left( u_{n},v_{n}\right) \rightarrow \left( u_{\beta }^{\left( 1\right)
},v_{\beta }^{\left( 1\right) }\right) \text{ a.e. in }\mathbb{R}^{3}.%
\end{array}
\label{2-1}
\end{equation}%
Moreover, $\left( u_{\beta }^{\left( 1\right) },v_{\beta }^{\left( 1\right)
}\right) $ is a solution of system $(HF_{\beta }).$ Next, we show that%
\begin{equation*}
\left( u_{n},v_{n}\right) \rightarrow \left( u_{\beta }^{\left( 1\right)
},v_{\beta }^{\left( 1\right) }\right) \text{ in }\mathbf{H}.
\end{equation*}%
To this end, we suppose the contrary. Then it has
\begin{equation}
\left\Vert \left( u_{\beta }^{\left( 1\right) },v_{\beta }^{\left( 1\right)
}\right) \right\Vert ^{2}<\liminf_{n\rightarrow \infty }\left\Vert \left(
u_{n},v_{n}\right) \right\Vert ^{2}.  \label{2-4}
\end{equation}%
Let $\left( \widetilde{u}_{n},\widetilde{v}_{n}\right) =\left(
u_{n}-u_{\beta }^{\left( 1\right) },v_{n}-v_{\beta }^{\left( 1\right)
}\right) .$ Then by (\ref{2-1})--(\ref{2-4}), we can assume that%
\begin{equation*}
\left( \widetilde{u}_{n},\widetilde{v}_{n}\right) \rightharpoonup \left(
0,0\right) \text{ weakly in }\mathbf{H}
\end{equation*}%
and%
\begin{equation}
\left\Vert \left( \widetilde{u}_{n},\widetilde{v}_{n}\right) \right\Vert
>c_{0}\text{ for some }c_{0}>0.  \label{2-9}
\end{equation}%
Moreover, similar to the argument in \cite[Lemma 4.1]{CV}, it follows from
the Brezis-Lieb Lemma \cite{BLi} that%
\begin{equation}
J_{\beta }(u_{n},v_{n})=J_{\beta }(u_{\beta }^{\left( 1\right) },v_{\beta
}^{\left( 1\right) })+J_{V_{\infty },\rho _{\infty },\beta }^{\infty }(%
\widetilde{u}_{n},\widetilde{v}_{n})+o\left( 1\right)  \label{2-21}
\end{equation}%
and
\begin{equation*}
\left( J_{V_{\infty },\rho _{\infty },\beta }^{\infty }\right) ^{\prime }(%
\widetilde{u}_{n},\widetilde{v}_{n})=o\left( 1\right) \text{ in }\mathbf{H}%
^{-1}.
\end{equation*}%
Thus, by (\ref{2-9}), Remark \ref{R1} and Lemma \ref{L2-5}, for $n$ large
enough one has
\begin{equation*}
J_{\beta }(u_{\beta }^{\left( 1\right) },v_{\beta }^{\left( 1\right)
})+J_{V_{\infty },\rho _{\infty },\beta }^{\infty }(\widetilde{u}_{n},%
\widetilde{v}_{n})>\alpha _{\beta }+\delta _{0}\text{ for some }\delta
_{0}>0,
\end{equation*}%
which contradicts to (\ref{2-21}). This implies that
\begin{equation*}
\left( u_{n},v_{n}\right) \rightarrow \left( u_{\beta }^{\left( 1\right)
},v_{\beta }^{\left( 1\right) }\right) \text{ in }\mathbf{H}
\end{equation*}%
and%
\begin{equation*}
J_{\beta }\left( u_{\beta }^{\left( 1\right) },v_{\beta }^{\left( 1\right)
}\right) =\alpha _{\beta }=\inf_{u\in \mathbf{H}}J_{\beta }(u,v)<0.
\end{equation*}%
Moreover, by Lemma \ref{L2-4} it follows that $u_{\beta }^{\left( 1\right)
}\neq 0,v_{\beta }^{\left( 1\right) }\neq 0,$ and so is $\left( \left\vert
u_{\beta }^{\left( 1\right) }\right\vert ,\left\vert v_{\beta }^{\left(
1\right) }\right\vert \right) .$ Thus, we may assume that $\left( u_{\beta
}^{\left( 1\right) },v_{\beta }^{\left( 1\right) }\right) $ is a vectorial
positive ground state solution of system $\left( HF_{\beta }\right) .$ This
completes the proof.

\subsection{The case of $3\leq p<4$}

First of all, we define the Palais--Smale (simply by (PS)) sequences and
(PS)--conditions in $\mathbf{H}$ for $J_{\beta }$ as follows.

\begin{definition}
$(i)$ For $\alpha \in \mathbb{R}\mathbf{,}$ a sequence $\left\{ \left(
u_{n},v_{n}\right) \right\} $ is a $(PS)_{\alpha }$--sequence in $\mathbf{H}$
for $J_{\beta }$ if $J_{\beta }(u_{n},v_{n})=\alpha +o(1)\;$and$\;J_{\beta
}^{\prime }(u_{n},v_{n})=o(1)\;$strongly in $\mathbf{H}^{-1}$ as $%
n\rightarrow \infty .$\newline
$(ii)$ We say that $J_{\beta }$ satisfies the $(PS)_{\alpha }$--condition in
$\mathbf{H}$ if every (PS)$_{\alpha }$--sequence in $\mathbf{H}$ for $%
J_{\beta }$ contains a convergent subsequence.
\end{definition}

\begin{proposition}
\label{P3}Assume that $3\leq p<4,\beta >0$ and conditions $\left( D5\right)
-\left( D6\right) $ hold. Let $\left( u_{0},v_{0}\right) $ be a nontrivial
solution of system $(HF_{\beta }).$ Then we have
\begin{equation*}
J_{\beta }\left( u_{0},v_{0}\right) \geq \frac{d_{0}\left( p-2\right) }{%
2\left( 6-p\right) }\int_{\mathbb{R}^{3}}(u_{0}^{2}+v_{0}^{2})dx>0.
\end{equation*}
\end{proposition}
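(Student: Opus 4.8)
The plan is to play the Nehari identity off against a Pohozaev-type identity for $(HF_\beta)$, choosing the combination so that the structural hypotheses $(D5)$--$(D6)$ absorb the terms produced by the $x$-dependence of $V,\rho$ and so that the (negative) Coulomb contribution cancels, leaving exactly $\frac{d_0(p-2)}{2(6-p)}\int(u_0^{2}+v_0^{2})$. First I would record the regularity needed: since $(D5)$--$(D6)$ give $V,\rho\in C^{1}\cap L^{\infty}$, a standard elliptic bootstrap (as for Schr\"odinger--Poisson systems) shows that any weak solution $(u_0,v_0)\in\mathbf H$ of $(HF_\beta)$ is classical with enough decay to legitimate the integrations by parts below, and $\phi:=\phi_{\rho,(u_0,v_0)}$ solves $-\Delta\phi=4\pi\rho(u_0^{2}+v_0^{2})$ with $\phi\ge0$ (Lemma~\ref{L2-3}). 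Writing $w:=u_0^{2}+v_0^{2}$ and $D:=\int_{\mathbb R^{3}}\rho\,\phi\,w\,dx\ge0$, testing $(HF_\beta)$ with $(u_0,v_0)$ gives the Nehari identity $\|(u_0,v_0)\|^{2}+D=\int_{\mathbb R^{3}}F_\beta(u_0,v_0)\,dx$, whence $J_\beta(u_0,v_0)=\tfrac{p-2}{2p}\|(u_0,v_0)\|^{2}-\tfrac{4-p}{4p}D$.

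Next I would establish the Pohozaev identity by pairing the first (resp.\ second) equation with $x\cdot\nabla u_0$ (resp.\ $x\cdot\nabla v_0$), summing, and integrating over $\mathbb R^{3}$ (first over $B_R$ against a cutoff, then $R\to\infty$). The gradient terms give $-\tfrac12\int(|\nabla u_0|^{2}+|\nabla v_0|^{2})$, the potential term gives $-\tfrac12\int(3V+\langle\nabla V,x\rangle)w$, and the nonlinearity gives $-\tfrac3p\int F_\beta(u_0,v_0)$. The only delicate contribution is the Coulomb one: using $u_0(x\cdot\nabla u_0)+v_0(x\cdot\nabla v_0)=\tfrac12\,x\cdot\nabla w$, integrating by parts and expanding $\nabla(\rho\phi)=\phi\nabla\rho+\rho\nabla\phi$, and then using $\rho w=-\tfrac1{4\pi}\Delta\phi$ together with $\int(-\Delta\phi)(x\cdot\nabla\phi)=-\tfrac12\int|\nabla\phi|^{2}=-2\pi D$, this term turns out to equal $-\tfrac12\int\langle\nabla\rho,x\rangle\,\phi w-\tfrac54 D$. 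Collecting and rearranging, and writing $3V+\langle\nabla V,x\rangle=V+(2V+\langle\nabla V,x\rangle)$ so that the $V$-part combines with the gradient terms into $\tfrac12\|(u_0,v_0)\|^{2}$, yields
\[ \tfrac12\|(u_0,v_0)\|^{2}+\tfrac12\!\int_{\mathbb R^{3}}\!\big(2V+\langle\nabla V,x\rangle\big)w\,dx+\tfrac12\!\int_{\mathbb R^{3}}\!\langle\nabla\rho,x\rangle\,\phi w\,dx+\tfrac54 D=\tfrac3p\!\int_{\mathbb R^{3}}\!F_\beta(u_0,v_0)\,dx. \]

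Then I would eliminate $\int F_\beta$ between this identity and the Nehari identity, obtaining $\tfrac{6-p}{2p}\|(u_0,v_0)\|^{2}=\tfrac12\int(2V+\langle\nabla V,x\rangle)w+\tfrac12\int\langle\nabla\rho,x\rangle\phi w+\tfrac{5p-12}{4p}D$. Now $(D5)$ gives $\int(2V+\langle\nabla V,x\rangle)w\ge d_0\int w$, while $(D6)$ together with $\phi,w\ge0$ gives $\int\langle\nabla\rho,x\rangle\phi w\ge-\tfrac{2(p-3)}{p-2}D$ (here $3\le p<4$ is exactly what makes this sign useful); plugging these in and simplifying $\tfrac{5p-12}{4p}-\tfrac{p-3}{p-2}=\tfrac{(4-p)(6-p)}{4p(p-2)}>0$ yields $\|(u_0,v_0)\|^{2}\ge\tfrac{pd_0}{6-p}\int w+\tfrac{4-p}{2(p-2)}D$. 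Substituting this into $J_\beta(u_0,v_0)=\tfrac{p-2}{2p}\|(u_0,v_0)\|^{2}-\tfrac{4-p}{4p}D$, the $D$-contributions cancel exactly and $J_\beta(u_0,v_0)\ge\tfrac{d_0(p-2)}{2(6-p)}\int_{\mathbb R^{3}}(u_0^{2}+v_0^{2})\,dx$, which is strictly positive since $(u_0,v_0)$ is nontrivial.

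The hard part is not the algebra but the rigorous Pohozaev identity: establishing sufficient regularity and decay of $(u_0,v_0)$ and of the nonlocal potential $\phi$, and carefully justifying the integration by parts in the Coulomb term, where the splitting $\nabla(\rho\phi)=\phi\nabla\rho+\rho\nabla\phi$ produces precisely the term $\int\langle\nabla\rho,x\rangle\phi w$ that $(D6)$ is tailored to control, while $(D5)$ controls the $V$-part; the range $3\le p<4$ enters both through the sign in $(D6)$ and through the positivity of the coefficients in the final combination.
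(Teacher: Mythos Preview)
Your proof is correct and follows essentially the same route as the paper: combine the Nehari identity with the Pohozaev identity (which the paper records as $\tfrac12\int(|\nabla u_0|^2+|\nabla v_0|^2)+\tfrac12\int(3V+\langle\nabla V,x\rangle)w+\tfrac14\int(5\rho+2\langle\nabla\rho,x\rangle)\phi w=\tfrac3p\int F_\beta$, identical to your version after regrouping), and then apply $(D5)$--$(D6)$. The only cosmetic difference is that the paper first derives the exact identity
\[
J_\beta(u_0,v_0)=\frac{p-2}{2(6-p)}\int(2V+\langle\nabla V,x\rangle)w\,dx+\frac{1}{2(6-p)}\int\bigl[2(p-3)\rho+(p-2)\langle\nabla\rho,x\rangle\bigr]\phi w\,dx
\]
and then bounds each integrand from below, whereas you apply $(D5)$--$(D6)$ one step earlier to get a lower bound on $\|(u_0,v_0)\|^2$ and substitute into $J_\beta=\tfrac{p-2}{2p}\|(u_0,v_0)\|^2-\tfrac{4-p}{4p}D$; the resulting cancellation of the $D$-terms is the same.
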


\begin{proof}
Let $\left( u_{0},v_{0}\right) $ be a nontrivial solution of system $%
(HF_{\beta }).$ Then $\left( u_{0},v_{0}\right) $ satisfies the following
identity:%
\begin{equation}
\left\Vert \left( u_{0},v_{0}\right) \right\Vert ^{2}+\int_{\mathbb{R}%
^{3}}\rho \left( x\right) \phi _{\rho ,\left( u_{0},v_{0}\right) }\left(
u_{0}^{2}+v_{0}^{2}\right) dx-\int_{\mathbb{R}^{3}}F_{\beta }\left(
u_{0},v_{0}\right) dx=0.  \label{2-6}
\end{equation}%
Following the argument of \cite[Lemma 3.1]{DM1}, it is not difficult to
verify that $\left( u_{0},v_{0}\right) $ also satisfies the following
Pohozaev type identity:%
\begin{eqnarray}
&&\frac{1}{2}\int_{\mathbb{R}^{3}}(\left\vert \nabla u_{0}\right\vert
^{2}+\left\vert \nabla v_{0}\right\vert ^{2})dx+\frac{1}{2}\int_{\mathbb{R}%
^{3}}(3V\left( x\right) +\langle \nabla V(x),x\rangle
)(u_{0}^{2}+v_{0}^{2})dx  \notag \\
&&+\frac{1}{4}\int_{\mathbb{R}^{3}}(5\rho \left( x\right) +2\langle \nabla
\rho (x),x\rangle )\phi _{\rho ,\left( u_{0},v_{0}\right) }\left(
u_{0}^{2}+v_{0}^{2}\right) dx  \notag \\
&=&\frac{3}{p}\int_{\mathbb{R}^{3}}F_{\beta }\left( u_{0},v_{0}\right) dx.
\label{2-7}
\end{eqnarray}%
Then it follows from conditions $\left( D5\right) -\left( D6\right) $ and (%
\ref{2-6})--(\ref{2-7}) that%
\begin{eqnarray*}
J_{\beta }\left( u_{0},v_{0}\right) &=&\frac{p-2}{2p}\left\Vert \left(
u_{0},v_{0}\right) \right\Vert ^{2}-\frac{4-p}{4p}\int_{\mathbb{R}^{3}}\rho
\left( x\right) \phi _{\rho ,\left( u_{0},v_{0}\right) }\left(
u_{0}^{2}+v_{0}^{2}\right) dx \\
&=&\frac{p-2}{2\left( 6-p\right) }\int_{\mathbb{R}^{3}}\left[ 2V\left(
x\right) +\langle \nabla V(x),x\rangle \right] (u_{0}^{2}+v_{0}^{2})dx \\
&&+\frac{1}{2\left( 6-p\right) }\int_{\mathbb{R}^{3}}\left[ 2\left(
p-3\right) \rho \left( x\right) +\left( p-2\right) \langle \nabla \rho
(x),x\rangle \right] \phi _{\rho ,\left( u_{0},v_{0}\right) }\left(
u_{0}^{2}+v_{0}^{2}\right) dx \\
&\geq &\frac{d_{0}\left( p-2\right) }{2\left( 6-p\right) }\int_{\mathbb{R}%
^{3}}(u_{0}^{2}+v_{0}^{2})dx>0.
\end{eqnarray*}%
This completes the proof.
\end{proof}

Define
\begin{equation*}
\mathbb{A}_{\beta }:=\left\{ \left( u,v\right) \in \mathbf{H}\setminus
\left\{ \left( 0,0\right) \right\} :\left( u,v\right) \text{ is a solution
of system }\left( HF_{\beta }\right) \text{ with }J_{\beta }\left(
u,v\right) <\gamma \left( \beta \right) \right\} ,
\end{equation*}%
where $\gamma \left( \beta \right) >0$ as in (\ref{3-3}). Clearly, $\mathbb{A%
}_{\beta }\subset \mathcal{N}_{\beta }\left[ \frac{3\sqrt{3}\pi \left(
p-2\right) ^{2}\lambda ^{3/2}}{64\sqrt[3]{2}p(4-p)\rho _{\max }^{2}}\right]
. $ Furthermore, we have the following result.

\begin{proposition}
\label{t6}Assume that $3\leq p<4,\beta >0$ and conditions $\left( D1\right)
-\left( D4\right) $ hold. Let $\left( u_{\beta },v_{\beta }\right) $ be a
nontrivial solution of system $(HF_{\beta }).$ Then we have\newline
$\left( i\right) $ If $\frac{1+\sqrt{73}}{3}\leq p<4,$ then $\left( u_{\beta
},v_{\beta }\right) \in \mathcal{N}_{\beta }^{-}.$\newline
$\left( ii\right) $ If $3\leq p<\frac{1+\sqrt{73}}{3}$ and $\left( u_{\beta
},v_{\beta }\right) \in \mathbb{A}_{\beta },$ then there exists $\widehat{%
\beta }_{0}>\frac{p-2}{2}$ such that for every $\beta >\widehat{\beta }_{0},$
there holds $\left( u_{\beta },v_{\beta }\right) \in \mathcal{N}_{\beta
}^{-}.$
\end{proposition}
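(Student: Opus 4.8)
The plan is the following. Since $\left( u_{\beta },v_{\beta }\right) $ solves $(HF_{\beta })$ it satisfies $\langle J_{\beta }^{\prime }\left( u_{\beta },v_{\beta }\right) ,\left( u_{\beta },v_{\beta }\right) \rangle =0$, hence lies on $\mathcal{N}_{\beta }$ and therefore belongs to exactly one of $\mathcal{N}_{\beta }^{+},\mathcal{N}_{\beta }^{0},\mathcal{N}_{\beta }^{-}$. By $(\ref{2-6-1})$ the assertion $\left( u_{\beta },v_{\beta }\right) \in \mathcal{N}_{\beta }^{-}$ is equivalent to $\phi _{\beta ,\left( u_{\beta },v_{\beta }\right) }^{\prime \prime }(1)=-\left( p-2\right) \|(u_{\beta },v_{\beta })\|^{2}+\left( 4-p\right) \int_{\mathbb{R}^{3}}\rho \phi _{\rho ,\left( u_{\beta },v_{\beta }\right) }\left( u_{\beta }^{2}+v_{\beta }^{2}\right) dx<0$, so everything reduces to establishing this sign. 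Besides the Nehari identity $(\ref{2-6})$, the solution $\left( u_{\beta },v_{\beta }\right) $ also satisfies the Pohozaev identity $(\ref{2-7})$; exactly as in the proof of Proposition \ref{P3} I would use these two identities to eliminate $\int_{\mathbb{R}^{3}}F_{\beta }\left( u_{\beta },v_{\beta }\right) dx$ and $\int_{\mathbb{R}^{3}}(|\nabla u_{\beta }|^{2}+|\nabla v_{\beta }|^{2})dx$, thereby rewriting $\phi _{\beta ,\left( u_{\beta },v_{\beta }\right) }^{\prime \prime }(1)$ as a combination in which the coefficient of the Coulomb term $\int_{\mathbb{R}^{3}}\rho \phi _{\rho ,\left( u_{\beta },v_{\beta }\right) }\left( u_{\beta }^{2}+v_{\beta }^{2}\right) dx$ is a negative multiple of $3p^{2}-2p-24$, while the other two contributions are negative multiples of $\int_{\mathbb{R}^{3}}\left[ 2V\left( x\right) +\langle \nabla V(x),x\rangle \right] \left( u_{\beta }^{2}+v_{\beta }^{2}\right) dx$ and of $\int_{\mathbb{R}^{3}}\left[ 2\left( p-3\right) \rho \left( x\right) +\left( p-2\right) \langle \nabla \rho (x),x\rangle \right] \phi _{\rho ,\left( u_{\beta },v_{\beta }\right) }\left( u_{\beta }^{2}+v_{\beta }^{2}\right) dx$.

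For part $(i)$: the quantity $3p^{2}-2p-24$ is nonnegative precisely when $p\geq \frac{1+\sqrt{73}}{3}$, so in this range the Coulomb coefficient in the rewritten expression is $\leq 0$. By $(D5)$ one has $\int_{\mathbb{R}^{3}}\left[ 2V+\langle \nabla V,x\rangle \right] \left( u_{\beta }^{2}+v_{\beta }^{2}\right) dx\geq d_{0}\int_{\mathbb{R}^{3}}\left( u_{\beta }^{2}+v_{\beta }^{2}\right) dx>0$ (the mass is positive because the solution is nontrivial), and by $(D6)$ the third bracket is nonnegative pointwise; hence every term of the rewritten expression for $\phi _{\beta ,\left( u_{\beta },v_{\beta }\right) }^{\prime \prime }(1)$ is $\leq 0$, with strict inequality coming from the mass term, so $\phi _{\beta ,\left( u_{\beta },v_{\beta }\right) }^{\prime \prime }(1)<0$ and $\left( u_{\beta },v_{\beta }\right) \in \mathcal{N}_{\beta }^{-}$.

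For part $(ii)$: when $3\leq p<\frac{1+\sqrt{73}}{3}$ one has $3p^{2}-2p-24<0$, the Coulomb coefficient is positive, and the sign argument of $(i)$ breaks down; instead I would exploit $\left( u_{\beta },v_{\beta }\right) \in \mathbb{A}_{\beta }$ together with largeness of $\beta $ to force the solution to have small norm. From $\left( u_{\beta },v_{\beta }\right) \in \mathbb{A}_{\beta }$ we get $J_{\beta }\left( u_{\beta },v_{\beta }\right) <\gamma \left( \beta \right) $, while Proposition \ref{P3} gives $J_{\beta }\left( u_{\beta },v_{\beta }\right) \geq \frac{d_{0}\left( p-2\right) }{2\left( 6-p\right) }\int_{\mathbb{R}^{3}}\left( u_{\beta }^{2}+v_{\beta }^{2}\right) dx$, so $\int_{\mathbb{R}^{3}}\left( u_{\beta }^{2}+v_{\beta }^{2}\right) dx\leq \frac{2\left( 6-p\right) }{d_{0}\left( p-2\right) }\gamma \left( \beta \right) $, which $\to 0$ as $\beta \to \infty $ since $\gamma \left( \beta \right) \to 0$ by $(\ref{3-3})$. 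Substituting this mass bound and the Coulomb estimate of Lemma \ref{L2-3}, namely $\int_{\mathbb{R}^{3}}\rho \phi _{\rho ,\left( u_{\beta },v_{\beta }\right) }\left( u_{\beta }^{2}+v_{\beta }^{2}\right) dx\leq \frac{16\sqrt[3]{2}\rho _{\max }^{2}}{3\sqrt{3}\pi }\left[ \int_{\mathbb{R}^{3}}\left( u_{\beta }^{2}+v_{\beta }^{2}\right) dx\right] ^{3/2}\|(u_{\beta },v_{\beta })\|$, into the identity $J_{\beta }\left( u_{\beta },v_{\beta }\right) =\frac{p-2}{2p}\|(u_{\beta },v_{\beta })\|^{2}-\frac{4-p}{4p}\int_{\mathbb{R}^{3}}\rho \phi _{\rho ,\left( u_{\beta },v_{\beta }\right) }\left( u_{\beta }^{2}+v_{\beta }^{2}\right) dx<\gamma \left( \beta \right) $ yields a quadratic inequality for $\|(u_{\beta },v_{\beta })\|$ that forces $\|(u_{\beta },v_{\beta })\|^{2}\to 0$ as $\beta \to \infty $. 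Hence there is $\widehat{\beta }_{0}\geq \beta _{0}$ such that for every $\beta >\widehat{\beta }_{0}$ one has $\|(u_{\beta },v_{\beta })\|^{2}<\frac{3\sqrt{3}\pi \left( p-2\right) \lambda ^{3/2}}{16\sqrt[3]{2}\left( 4-p\right) \rho _{\max }^{2}}$, which is exactly the regime in which $(\ref{3-6})$ applies and gives $\phi _{\beta ,\left( u_{\beta },v_{\beta }\right) }^{\prime \prime }(1)<0$; therefore $\left( u_{\beta },v_{\beta }\right) \in \mathcal{N}_{\beta }^{-}$.

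The heart of the matter, and the expected main obstacle, is the control of the Coulomb interaction $\int_{\mathbb{R}^{3}}\rho \phi _{\rho ,\left( u_{\beta },v_{\beta }\right) }\left( u_{\beta }^{2}+v_{\beta }^{2}\right) dx$ against the remaining quantities: for $p\geq \frac{1+\sqrt{73}}{3}$ the algebraic sign coming out of the combination of $(\ref{2-6})$, $(\ref{2-7})$ and $(D5)$--$(D6)$ already does the job, but for $3\leq p<\frac{1+\sqrt{73}}{3}$ it does not, which is precisely why part $(ii)$ must restrict to the low-energy class $\mathbb{A}_{\beta }$ and take $\beta $ so large that $\gamma \left( \beta \right) $ — and with it the norm of the solution — is small enough for Lemma \ref{L2-3} to absorb the Coulomb term; verifying that $\gamma \left( \beta \right) \to 0$ and pinning down the threshold $\widehat{\beta }_{0}$ is the remaining technical bookkeeping.
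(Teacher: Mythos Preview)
Your treatment of part $(ii)$ is sound and is essentially the paper's argument in different clothing: both combine Proposition~\ref{P3} with the Coulomb bound of Lemma~\ref{L2-3} and the energy ceiling $\theta<\gamma(\beta)$ to force a quadratic inequality that makes the solution small; the paper encodes this through the parameter $t=z_4/2(p-2)$, you through $\|(u_\beta,v_\beta)\|$ and the criterion $(\ref{3-6})$, but the mechanism is identical.

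Part $(i)$, however, contains a genuine algebraic error. After eliminating $z_1=\int(|\nabla u_\beta|^2+|\nabla v_\beta|^2)$ and $z_6=\int F_\beta$ from the Nehari and Pohozaev identities, one finds
\[
\phi''_{\beta,(u_\beta,v_\beta)}(1)
=-\frac{p(p-2)}{6-p}\,(2z_2+z_3)
-\frac{p}{6-p}\bigl[2(p-3)z_4+(p-2)z_5\bigr]
+\frac{4-p}{2}\,z_4 .
\]
Thus, written in the basis you propose --- namely $z_4$, $(2z_2+z_3)$ and $[2(p-3)z_4+(p-2)z_5]$ --- the coefficient of the standalone Coulomb term is $\frac{4-p}{2}>0$, \emph{not} a negative multiple of $3p^2-2p-24$. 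The factor $-\frac{3p^2-2p-24}{2(6-p)}$ does arise, but only as the \emph{total} $z_4$ coefficient after the $2(p-3)z_4$ piece has been pulled out of the bracket; once you do that, the remaining third contribution is $-\frac{p(p-2)}{6-p}z_5$, and $(D6)$ gives only $(p-2)z_5\ge -2(p-3)z_4$, not $z_5\ge 0$. So you cannot simultaneously have the $z_4$ coefficient carry $3p^2-2p-24$ and invoke $(D6)$ on the full bracket: whichever grouping you choose, one term has the wrong sign and your argument stalls.

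The paper proceeds differently. It writes the general solution of the linear system (energy level, Nehari, Pohozaev) parametrically with free parameters $t,s,r$ (so that $z_4=2(p-2)t$, $z_3=2s$, $z_5=2r$), obtains the exact identity $\phi''(1)=-2p\theta+(p-2)(4-p)t$, and then uses the positivity $z_2>0$ together with $s,r>0$ to extract the bound $t<\frac{6-p}{2(p-3)(p-2)}\theta$; substituting gives $\phi''(1)<-\frac{3p^2-2p-24}{2(p-3)}\theta\le 0$ exactly when $p\ge\frac{1+\sqrt{73}}{3}$. The missing ingredient in your plan is precisely a bound of $z_4$ in terms of $\theta$, which the paper gets from $z_2>0$ rather than from $(D5)$--$(D6)$.
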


\begin{proof}
Let $\left( u_{\beta },v_{\beta }\right) $ be a nontrivial solution of
system $(HF_{\beta }).$ By Lemma \ref{L2-3}, we have%
\begin{equation}
\int_{\mathbb{R}^{3}}\rho \left( x\right) \phi _{\rho ,\left(
u_{0},v_{0}\right) }\left( u_{0}^{2}+v_{0}^{2}\right) dx\leq \frac{16\sqrt[3]%
{2}\rho _{\max }^{2}}{3\sqrt{3}\pi }\left( \frac{2(6-p)}{d_{0}\left(
p-2\right) }\theta \right) ^{3/2}\left[ \int_{\mathbb{R}^{3}}(|\nabla
u_{0}|^{2}+|\nabla v_{0}|^{2})dx\right] ^{1/2},  \label{2-8}
\end{equation}%
where $\theta :=J_{\beta }\left( u_{0},v_{0}\right) .$ We now define%
\begin{equation*}
\begin{array}{ll}
z_{1}=\int_{\mathbb{R}^{3}}(|\nabla u_{0}|^{2}+|\nabla v_{0}|^{2})dx, &
z_{2}=\int_{\mathbb{R}^{3}}V\left( x\right) \left(
u_{0}^{2}+v_{0}^{2}\right) dx, \\
z_{3}=\int_{\mathbb{R}^{3}}\langle \nabla V(x),x\rangle \left(
u_{0}^{2}+v_{0}^{2}\right) dx, & z_{4}=\int_{\mathbb{R}^{3}}\rho \left(
x\right) \phi _{\rho ,\left( u_{0},v_{0}\right) }\left(
u_{0}^{2}+v_{0}^{2}\right) dx, \\
z_{5}=\int_{\mathbb{R}^{3}}\langle \nabla \rho (x),x\rangle \phi _{\rho
,\left( u_{0},v_{0}\right) }\left( u_{0}^{2}+v_{0}^{2}\right) dx, &
z_{6}=\int_{\mathbb{R}^{3}}F_{\beta }\left( u_{0},v_{0}\right) dx.%
\end{array}%
\end{equation*}%
Then from (\ref{2-6})--(\ref{2-7}) it follows that%
\begin{equation}
\left\{
\begin{array}{l}
\frac{1}{2}z_{1}+\frac{1}{2}z_{2}+\frac{1}{4}z_{4}-\frac{1}{p}z_{6}=\theta ,
\\
z_{1}+z_{2}+z_{4}-z_{6}=0, \\
\frac{1}{2}z_{1}+\frac{3}{2}z_{2}+\frac{1}{2}z_{3}+\frac{5}{4}z_{4}+\frac{1}{%
2}z_{5}-\frac{3}{p}z_{6}=0, \\
z_{i}>0\text{ for }i=1,2,3,4,5,6.%
\end{array}%
\right.  \label{2-10}
\end{equation}%
Moreover, by Proposition \ref{P3} and (\ref{2-8}), we have%
\begin{equation*}
\theta \geq \frac{d_{0}\left( p-2\right) }{2\left( 6-p\right) }\int_{\mathbb{%
R}^{3}}(u_{0}^{2}+v_{0}^{2})dx>0
\end{equation*}%
and%
\begin{equation}
z_{4}^{2}\leq \left( \frac{16\sqrt[3]{2}\rho _{\max }^{2}}{3\sqrt{3}\pi }%
\right) ^{2}\left( \frac{2(6-p)}{d_{0}\left( p-2\right) }\theta \right)
^{3}z_{1}.  \label{2-12}
\end{equation}%
Note that the general solution of system (\ref{2-10}) is%
\begin{equation}
\left[
\begin{array}{c}
z_{1} \\
z_{2} \\
z_{3} \\
z_{4} \\
z_{5} \\
z_{6}%
\end{array}%
\right] =\frac{\theta }{p-2}\left[
\begin{array}{c}
3(p-2) \\
6-p \\
0 \\
0 \\
0 \\
2p%
\end{array}%
\right] +t\left[
\begin{array}{c}
p-2 \\
-2\left( p-3\right) \\
0 \\
2\left( p-2\right) \\
0 \\
p%
\end{array}%
\right] +s\left[
\begin{array}{c}
1 \\
-1 \\
2 \\
0 \\
0 \\
0%
\end{array}%
\right] +r\left[
\begin{array}{c}
1 \\
-1 \\
0 \\
0 \\
2 \\
0%
\end{array}%
\right] ,  \label{2-3}
\end{equation}%
where $t,s,r\in \mathbb{R}.$ From (\ref{2-3}), we obtain that $z_{i}>0$ $%
(i=1,2,3,4,5,6)$ provided that the parameters $t,s,r>0$ satisfy%
\begin{equation}
2\left( p-3\right) t+s+r<\frac{6-p}{p-2}\theta  \label{2-13}
\end{equation}%
or
\begin{equation}
0<t<\frac{6-p}{2(p-3)\left( p-2\right) }\theta .  \label{2-11}
\end{equation}%
Using (\ref{2-3}) again, we get
\begin{equation}
-\left( p-2\right) \left( z_{1}+z_{2}\right) +\left( 4-p\right)
z_{4}=-2p\theta +(p-2)(4-p)t.  \label{2-18}
\end{equation}%
$\left( i\right) $ $\frac{1+\sqrt{73}}{3}\leq p<4.$ By (\ref{2-11})--(\ref%
{2-18}), one has%
\begin{eqnarray*}
-\left( p-2\right) \left( z_{1}+z_{2}\right) +\left( 4-p\right) z_{4}
&=&-2p\theta +(p-2)(4-p)t \\
&<&-2p\theta +\frac{(4-p)\left( 6-p\right) }{2(p-3)}\theta \\
&\leq &0.
\end{eqnarray*}%
This shows that%
\begin{equation*}
\phi _{\beta ,\left( u_{0},v_{0}\right) }^{\prime \prime }\left( 1\right)
=-\left( p-2\right) \left\Vert \left( u_{0},v_{0}\right) \right\Vert
^{2}+\left( 4-p\right) \int_{\mathbb{R}^{3}}\rho \left( x\right) \phi _{\rho
,\left( u_{0},v_{0}\right) }\left( u_{0}^{2}+v_{0}^{2}\right) dx<0,
\end{equation*}%
leading to $\left( u_{0},v_{0}\right) \in \mathcal{N}_{\beta }^{-}.$\newline
$\left( ii\right) $ $3\leq p<\frac{1+\sqrt{73}}{3}.$ Substituting (\ref{2-3}%
) and (\ref{2-13}) into (\ref{2-12}), we have%
\begin{eqnarray}
&&4(p-2)^{2}t^{2}-\left( 4-p\right) \left( \frac{16\sqrt[3]{2}\rho _{\max
}^{2}}{3\sqrt{3}\pi }\right) ^{2}\left( \frac{2(6-p)}{d_{0}\left( p-2\right)
}\right) ^{3}\theta ^{3}t  \notag \\
&<&\frac{2p}{p-2}\left( \frac{16\sqrt[3]{2}\rho _{\max }^{2}}{3\sqrt{3}\pi }%
\right) ^{2}\left( \frac{2(6-p)}{d_{0}\left( p-2\right) }\right) ^{3}\theta
^{4}.  \label{2-14}
\end{eqnarray}%
Since $t>0$, it follows from (\ref{2-14}) that%
\begin{equation*}
4(p-2)^{2}t^{2}-\left( 4-p\right) A\theta ^{3}t-\frac{2p}{p-2}A\theta ^{4}<0,
\end{equation*}%
where $A:=\left( \frac{16\sqrt[3]{2}\rho _{\max }^{2}}{3\sqrt{3}\pi }\right)
^{2}\left( \frac{2(6-p)}{d_{0}\left( p-2\right) }\right) ^{3}>0$. This
implies that
\begin{equation}
0<t<\frac{\left( 4-p\right) A\theta ^{3}+\sqrt{\left( 4-p\right)
^{2}A^{2}\theta ^{6}+32p(p-2)A\theta ^{4}}}{8(p-2)^{2}}.  \label{2-15}
\end{equation}%
Then it follows from (\ref{2-18}) and (\ref{2-15}) that%
\begin{eqnarray}
&&\frac{-\left( p-2\right) \left( z_{1}+z_{2}\right) +\left( 4-p\right) z_{4}%
}{\theta }  \notag \\
&\leq &-2p+\frac{(4-p)\left( A\theta ^{2}\left( 4-p\right) +\sqrt{%
A^{2}\left( 4-p\right) ^{2}\theta ^{4}+32p(p-2)A\theta ^{2}}\right) }{%
8\left( p-2\right) }.  \label{2-16}
\end{eqnarray}%
Now, we prove that there exists a constant $\widehat{\beta }_{0}>\frac{p-2}{2%
}$ such that for all $\beta >\widehat{\beta }_{0},$%
\begin{equation}
A\theta ^{2}\left( 4-p\right) ^{2}+\sqrt{A^{2}\theta ^{4}\left( 4-p\right)
^{4}+32p(p-2)A\theta ^{2}\left( 4-p\right) ^{2}}<16p\left( p-2\right) .
\label{2-20}
\end{equation}%
Indeed, let%
\begin{equation*}
\Lambda _{\beta }:=A\theta ^{2}\left( 4-p\right) ^{2}.
\end{equation*}%
Clearly,%
\begin{equation*}
\Lambda _{\beta }<A\gamma ^{2}\left( \beta \right) \left( 4-p\right) ^{2}
\end{equation*}%
and
\begin{eqnarray*}
&&A\theta ^{2}\left( 4-p\right) ^{2}+\sqrt{A^{2}\theta ^{4}\left( 4-p\right)
^{4}+32p(p-2)A\theta ^{2}\left( 4-p\right) ^{2}} \\
&=&\Lambda _{\beta }+\sqrt{\Lambda _{\beta }^{2}+32p(p-2)\Lambda _{\beta }}.
\end{eqnarray*}%
We note that if $\Lambda _{\beta }<4p\left( p-2\right) ,$ then there holds%
\begin{equation*}
\Lambda _{\beta }+\sqrt{\Lambda _{\beta }^{2}+32p(p-2)\Lambda _{\beta }}%
<16p\left( p-2\right) .
\end{equation*}%
Next, we only claim that there exists a constant $\widehat{\beta }_{0}>\frac{%
p-2}{2}$ such that
\begin{equation}
A\gamma ^{2}\left( \beta \right) \left( 4-p\right) ^{2}<4p\left( p-2\right)
\text{ for all }\beta >\widehat{\beta }_{0}.  \label{7-4}
\end{equation}%
In fact, since%
\begin{equation*}
\gamma \left( \beta \right) :=\frac{V_{\max }}{\lambda }\left[ \frac{2\left(
p-2\right) }{p}\left( \frac{V_{\max }S_{\lambda ,p}^{p}}{\lambda \left(
1+\beta \right) }\right) ^{2/\left( p-2\right) }+\frac{16\sqrt[3]{2}\rho
_{\max }^{2}}{3\sqrt{3}\pi \lambda ^{3/2}}\left( \frac{2}{4-p}\right) ^{%
\frac{4}{p-2}}\left( \frac{V_{\max }S_{\lambda ,p}^{p}}{\lambda \left(
1+\beta \right) }\right) ^{4/\left( p-2\right) }\right] ,
\end{equation*}%
there exists $\widehat{\beta }_{0}=\widehat{\beta }_{0}\left( \lambda
,V_{\max },\rho _{\max }\right) >\frac{p-2}{2}$ such that (\ref{7-4}) holds,
and so (\ref{2-20}) holds. Hence, by (\ref{2-16})--(\ref{2-20}) one has%
\begin{equation*}
-\left( p-2\right) \left( z_{1}+z_{2}\right) +\left( 4-p\right) z_{4}<0\text{
for all }\beta >\widehat{\beta }_{0}.
\end{equation*}%
This shows that%
\begin{equation*}
\phi _{\beta ,\left( u_{0},v_{0}\right) }^{\prime \prime }\left( 1\right)
=-\left( p-2\right) \left\Vert \left( u_{0},v_{0}\right) \right\Vert
^{2}+\left( 4-p\right) \int_{\mathbb{R}^{3}}\rho \left( x\right) \phi _{\rho
,\left( u_{0},v_{0}\right) }\left( u_{0}^{2}+v_{0}^{2}\right) dx<0,
\end{equation*}%
leading to $\left( u_{0},v_{0}\right) \in \mathcal{N}_{\beta }^{-}.$
Therefore, we have $\mathbb{A}_{\beta }\subset \mathcal{N}_{\beta }^{-}.$
This completes the proof.
\end{proof}

Define
\begin{equation*}
\alpha _{\beta }^{-}:=\inf_{\left( u,v\right) \in \mathcal{N}_{\beta
}^{\left( 1\right) }}J_{\beta }\left( u,v\right) \text{ for }2<p<4.
\end{equation*}%
Clearly, $\alpha _{\beta }^{-}=\inf_{u\in \mathcal{N}_{\beta }^{-}}J_{\beta
}\left( u,v\right) .$ It follows from Lemmas \ref{g5} and \ref{g4} that for
every $\beta >\bar{\beta}_{0},$
\begin{equation}
\frac{p-2}{4p}C_{\beta }^{-1/\left( p-2\right) }<\alpha _{\beta }^{-}<\min
\left\{ \frac{3\sqrt{3}\pi \left( p-2\right) ^{2}\lambda ^{3/2}}{64\sqrt[3]{2%
}p(4-p)\rho _{\max }^{2}},\frac{p-2}{2p}S_{\lambda ,p}^{2p/\left( p-2\right)
}\right\} \text{ for }2<p<4.  \label{7-6}
\end{equation}%
Next, we consider the limiting problem as follows:%
\begin{equation}
\left\{
\begin{array}{ll}
-\Delta u+V_{\infty }u+\rho _{\infty }\phi _{\rho _{\infty },\left(
u,v\right) }u=\left\vert u\right\vert ^{p-2}u+\beta \left\vert v\right\vert
^{\frac{p}{2}}\left\vert u\right\vert ^{\frac{p}{2}-2}u & \text{ in }\mathbb{%
R}^{3}, \\
-\Delta v+V_{\infty }v+\rho _{\infty }\phi _{\rho _{\infty },\left(
u,v\right) }v=\left\vert v\right\vert ^{p-2}v+\beta \left\vert u\right\vert
^{\frac{p}{2}}\left\vert v\right\vert ^{\frac{p}{2}-2}v & \text{ in }\mathbb{%
R}^{3}.%
\end{array}%
\right.
\tag*{$\left(
HF_{V_{\infty
},\rho
_{\infty
},\beta
}^{%
\infty
}%
\right)
$}
\end{equation}%
Using the similar arguments in \cite[ Theorems 1.5 and 4.9]{SW}, there
exists a constant $\beta _{0}^{\infty }>\frac{p-2}{2}$ such that for all $%
\beta >\beta _{0}^{\infty },$ system $\left( HF_{V_{\infty },\rho _{\infty
},\beta }^{\infty }\right) $ has a vectorial positive solution $\left(
u_{\beta }^{\infty },v_{\beta }^{\infty }\right) \in \mathcal{N}_{\beta
,\infty }^{\left( 1\right) }$ satisfying $J_{V_{\infty },\rho _{\infty
},\beta }^{\infty }(u_{\beta }^{\infty },v_{\beta }^{\infty })=\alpha
_{\beta ,\infty }^{-}<\frac{p-2}{2p}S_{\lambda ,p}^{2p/\left( p-2\right) },$
where $\mathcal{N}_{\beta ,\infty }^{\left( 1\right) }=\mathcal{N}_{\beta
}^{\left( 1\right) },$ $J_{V_{\infty },\rho _{\infty },\beta }^{\infty
}=J_{\beta }$ and $\alpha _{\beta ,\infty }^{-}=\alpha _{\beta }^{-}$ with $%
V(x)\equiv V_{\infty }$ and $\rho (x)\equiv \rho _{\infty }.$ Furthermore,
we have the following results.

\begin{lemma}
\label{L5.5}Assume that $2<p<4$ and conditions $\left( D1\right) -\left(
D2\right) $ and $\left( D4\right) $ hold. Then for all $\beta >\beta
_{0}^{\infty },$
\begin{equation*}
\alpha _{\beta }^{-}<\alpha _{\beta ,\infty }^{-}<\frac{p-2}{2p}S_{\lambda
,p}^{2p/\left( p-2\right) }.
\end{equation*}
\end{lemma}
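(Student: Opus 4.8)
The plan is as follows. The right-hand inequality $\alpha_{\beta,\infty}^{-}<\frac{p-2}{2p}S_{\lambda,p}^{2p/(p-2)}$ is exactly the property of the vectorial positive solution $\left(u_{\beta}^{\infty},v_{\beta}^{\infty}\right)\in\mathcal{N}_{\beta,\infty}^{(1)}$ of the limiting system $\left(HF_{V_{\infty},\rho_{\infty},\beta}^{\infty}\right)$ recalled just above (from \cite{SW}), so only the strict inequality $\alpha_{\beta}^{-}<\alpha_{\beta,\infty}^{-}$ requires proof. I would argue by a strict comparison between the two functionals along a dilation of $w:=\left(u_{\beta}^{\infty},v_{\beta}^{\infty}\right)$. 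Since $\beta>\beta_{0}^{\infty}>\frac{p-2}{2}>0$ one has $F_{\beta}(w)>0$, and since $u_{\beta}^{\infty},v_{\beta}^{\infty}>0$ a.e., condition $\left(D4\right)$ (i.e. $V\le V_{\infty}$ and $\rho\le\rho_{\infty}$ with strict inequality on a set of positive measure) yields the strict pointwise comparison $J_{\beta}(sw)<J_{V_{\infty},\rho_{\infty},\beta}^{\infty}(sw)$ for every $s>0$, because both the quadratic part $\left\Vert sw\right\Vert ^{2}$ and the Coulomb part are strictly decreased when the potentials are replaced by $V,\rho$.

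Next I would project an appropriate dilation of $w$ onto $\mathcal{N}_{\beta}^{-}$. Put $\phi_{\beta,w}(t)=J_{\beta}(tw)$, so that $\phi_{\beta,w}^{\prime}(t)=t\,\psi(t)$ with $\psi(t)=\left\Vert w\right\Vert ^{2}+t^{2}\int_{\mathbb{R}^{3}}\rho\,\phi_{\rho,w}\big((u_{\beta}^{\infty})^{2}+(v_{\beta}^{\infty})^{2}\big)dx-t^{p-2}\int_{\mathbb{R}^{3}}F_{\beta}(w)\,dx$, and let $\psi_{\infty}$ be the analogous function for $J_{V_{\infty},\rho_{\infty},\beta}^{\infty}$; by the previous step $\psi(t)<\psi_{\infty}(t)$ for all $t>0$. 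Because $w\in\mathcal{N}_{\beta,\infty}^{(1)}\subset\mathcal{N}_{\beta,\infty}^{-}$ (Lemma \ref{g7} with $V\equiv V_{\infty},\ \rho\equiv\rho_{\infty}$) we have $\psi_{\infty}(1)=0$ and $\psi_{\infty}^{\prime}(1)<0$, hence $\psi(1)<0$. Since $2<p<4$, the derivative $\psi^{\prime}(t)=t^{p-3}\big(2t^{4-p}\int_{\mathbb{R}^{3}}\rho\,\phi_{\rho,w}(\cdots)dx-(p-2)\int_{\mathbb{R}^{3}}F_{\beta}(w)dx\big)$ is negative on an initial interval and positive afterwards, so $\psi$ is strictly decreasing then strictly increasing; together with $\psi(0)=\left\Vert w\right\Vert ^{2}>0$, $\psi(1)<0$ and $\psi(t)\to+\infty$ as $t\to\infty$, this forces $\psi$ to have a smallest zero $t_{\beta}^{-}\in(0,1)$ at which $\psi^{\prime}(t_{\beta}^{-})<0$. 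Since $\psi(t_{\beta}^{-})=0$ we get $t_{\beta}^{-}w\in\mathcal{N}_{\beta}$, and $\phi_{\beta,t_{\beta}^{-}w}^{\prime\prime}(1)=(t_{\beta}^{-})^{3}\psi^{\prime}(t_{\beta}^{-})<0$, i.e. $t_{\beta}^{-}w\in\mathcal{N}_{\beta}^{-}$.

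To conclude, note that $\psi_{\infty}^{\prime}(1)<0$ and the same ``decreasing then increasing'' shape of $\psi_{\infty}$ give $\psi_{\infty}>0$ on $(0,1)$, so $t\mapsto J_{V_{\infty},\rho_{\infty},\beta}^{\infty}(tw)$ is strictly increasing on $[0,1]$; since $0<t_{\beta}^{-}<1$ this yields $J_{V_{\infty},\rho_{\infty},\beta}^{\infty}(t_{\beta}^{-}w)<J_{V_{\infty},\rho_{\infty},\beta}^{\infty}(w)=\alpha_{\beta,\infty}^{-}$. Combining this with the strict comparison $J_{\beta}(t_{\beta}^{-}w)<J_{V_{\infty},\rho_{\infty},\beta}^{\infty}(t_{\beta}^{-}w)$ and with $t_{\beta}^{-}w\in\mathcal{N}_{\beta}^{-}$, and using $\alpha_{\beta}^{-}=\inf_{\mathcal{N}_{\beta}^{-}}J_{\beta}$, we obtain $\alpha_{\beta}^{-}\le J_{\beta}(t_{\beta}^{-}w)<\alpha_{\beta,\infty}^{-}<\frac{p-2}{2p}S_{\lambda,p}^{2p/(p-2)}$. (Since $\rho_{\max}=\rho_{\infty}$, the energy threshold defining $\mathcal{N}_{\beta}^{(1)}$ equals the one for the limiting problem, and $J_{\beta}(t_{\beta}^{-}w)<\alpha_{\beta,\infty}^{-}$ lies below it, so in fact $t_{\beta}^{-}w\in\mathcal{N}_{\beta}^{(1)}$, consistently with the other description of $\alpha_{\beta}^{-}$.)

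The main obstacle is the ``right branch'' issue: one must make sure the projected dilation $t_{\beta}^{-}w$ falls on $\mathcal{N}_{\beta}^{-}$ and not on $\mathcal{N}_{\beta}^{+}$ — this is precisely what the sign analysis of $\psi^{\prime}$ at the smallest zero of $\psi$ guarantees, and it is here that $2<p<4$ (which makes the quartic Coulomb term dominate at infinity and fixes the monotonicity pattern of $\psi$) is used. A secondary, routine point is to justify the strictness in $J_{\beta}(sw)<J_{V_{\infty},\rho_{\infty},\beta}^{\infty}(sw)$, which follows from $\left(D4\right)$ together with the strict positivity of $u_{\beta}^{\infty}$ and $v_{\beta}^{\infty}$.
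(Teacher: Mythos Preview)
Your proof is correct and follows essentially the same route as the paper: both take the limiting ground state $w=(u_\beta^\infty,v_\beta^\infty)$, use $(D4)$ and the positivity of $w$ to get the strict comparison $J_\beta(sw)<J_{V_\infty,\rho_\infty,\beta}^\infty(sw)$, then project a dilation $t_\beta^-w$ with $t_\beta^-\in(0,1)$ onto the ``minus'' branch of the Nehari set and use the monotonicity of $t\mapsto J_{V_\infty,\rho_\infty,\beta}^\infty(tw)$ on $[0,1]$ to conclude. The only cosmetic difference is that you carry out the full shape analysis of $\psi$ to place $t_\beta^-w$ directly in $\mathcal N_\beta^-$, whereas the paper instead uses the norm bound $\Vert t_{0,\infty}w\Vert<\Vert w\Vert_{V_\infty}$ together with the computation (\ref{3-6}) to land in $\mathcal N_\beta^{(1)}$; since $\rho_{\max}=\rho_\infty$ under $(D4)$ the two descriptions of $\alpha_\beta^-$ agree, so both arguments yield the same conclusion.
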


\begin{proof}
Let $\left( u_{\beta }^{\infty },v_{\beta }^{\infty }\right) \in \mathcal{N}%
_{\beta ,\infty }^{\left( 1\right) }$ be a vectorial positive solution of
system $\left( HF_{V_{\infty },\rho _{\infty },\beta }^{\infty }\right) $
satisfying%
\begin{equation*}
\left\Vert \left( u_{\beta }^{\infty },v_{\beta }^{\infty }\right)
\right\Vert _{V_{\infty }}<\left( \frac{3\sqrt{3}\pi \left( p-2\right)
\lambda ^{3/2}}{16\sqrt[3]{2}(4-p)\rho _{\infty }^{2}}\right) ^{1/2}
\label{7-7}
\end{equation*}%
and
\begin{equation*}
J_{V_{\infty },\rho _{\infty },\beta }^{\infty }(u_{\beta }^{\infty
},v_{\beta }^{\infty })=\alpha _{\beta ,\infty }^{-}<\frac{p-2}{2p}%
S_{\lambda ,p}^{2p/\left( p-2\right) }.  \label{7-8}
\end{equation*}%
Then we have
\begin{equation}
\left\Vert \left( u_{\beta }^{\infty },v_{\beta }^{\infty }\right)
\right\Vert <\left\Vert \left( u_{\beta }^{\infty },v_{\beta }^{\infty
}\right) \right\Vert _{V_{\infty }}<\left( \frac{3\sqrt{3}\pi \left(
p-2\right) \lambda ^{3/2}}{16\sqrt[3]{2}(4-p)\rho _{\infty }^{2}}\right)
^{1/2}  \label{7-9}
\end{equation}%
and%
\begin{eqnarray}
&&\left\Vert \left( u_{\beta }^{\infty },v_{\beta }^{\infty }\right)
\right\Vert ^{2}+\int_{\mathbb{R}^{3}}\rho \left( x\right) \phi _{\rho
,\left( u_{\beta }^{\infty },v_{\beta }^{\infty }\right) }\left[ \left(
u_{\beta }^{\infty }\right) ^{2}+\left( v_{\beta }^{\infty }\right) ^{2}%
\right] dx-\int_{\mathbb{R}^{N}}F_{\beta }\left( u_{\beta }^{\infty
},v_{\beta }^{\infty }\right) dx  \notag \\
&<&\left\Vert \left( u_{\beta }^{\infty },v_{\beta }^{\infty }\right)
\right\Vert _{V_{\infty }}^{2}+\int_{\mathbb{R}^{3}}\rho _{\infty }\phi
_{\rho _{\infty },\left( u_{\beta }^{\infty },v_{\beta }^{\infty }\right) }%
\left[ \left( u_{\beta }^{\infty }\right) ^{2}+\left( v_{\beta }^{\infty
}\right) ^{2}\right] dx-\int_{\mathbb{R}^{N}}F_{\beta }\left( u_{\beta
}^{\infty },v_{\beta }^{\infty }\right) dx.  \label{7-11}
\end{eqnarray}%
Moreover, we also have%
\begin{equation}
J_{\beta }(u_{\beta }^{\infty },v_{\beta }^{\infty })<J_{V_{\infty },\rho
_{\infty },\beta }^{\infty }(u_{\beta }^{\infty },v_{\beta }^{\infty
})<\alpha _{\beta ,\infty }^{-}<\frac{p-2}{2p}S_{\lambda ,p}^{2p/\left(
p-2\right) }  \label{7-12}
\end{equation}%
and%
\begin{equation*}
J_{\beta }(tu_{\beta }^{\infty },tv_{\beta }^{\infty })<J_{V_{\infty },\rho
_{\infty },\beta }^{\infty }(tu_{\beta }^{\infty },tv_{\beta }^{\infty })%
\text{ for all }t>0.  \label{7-13}
\end{equation*}%
Since $\phi _{\beta ,\left( u_{\beta }^{\infty },v_{\beta }^{\infty }\right)
}\left( t\right) =J_{\beta }(tu_{\beta }^{\infty },tv_{\beta }^{\infty })$
is increasing for $t>0$ small enough, it follows from (\ref{7-11})--(\ref%
{7-12}) that%
\begin{equation*}
\phi _{\beta ,\left( u_{\beta }^{\infty },v_{\beta }^{\infty }\right)
}^{\prime }\left( 1\right) <\frac{d}{dt}|_{t=1}J_{V_{\infty },\rho _{\infty
},\beta }^{\infty }(tu_{\beta }^{\infty },tv_{\beta }^{\infty })=0,
\end{equation*}%
which implies that there exists $0<t_{0,\infty }<1$ such that $\left(
t_{0,\infty }u_{\beta }^{\infty },t_{0,\infty }v_{\beta }^{\infty }\right)
\in \mathcal{N}_{\beta ,\infty }$ and%
\begin{eqnarray*}
J_{\beta }\left( t_{0,\infty }u_{\beta }^{\infty },t_{0,\infty }v_{\beta
}^{\infty }\right) &<&J_{V_{\infty },\rho _{\infty },\beta }^{\infty }\left(
t_{0,\infty }u_{\beta }^{\infty },t_{0,\infty }v_{\beta }^{\infty }\right)
\notag \\
&<&J_{V_{\infty },\rho _{\infty },\beta }^{\infty }\left( u_{\beta }^{\infty
},v_{\beta }^{\infty }\right) <\alpha _{\beta ,\infty }^{-}<\frac{p-2}{2p}%
S_{\lambda ,p}^{2p/\left( p-2\right) },  \label{7-14}
\end{eqnarray*}%
where $\mathcal{N}_{\beta ,\infty }=\mathcal{N}_{\beta }$ with $V(x)\equiv
V_{\infty }$ and $\rho (x)\equiv \rho _{\infty }.$ Moreover, by (\ref{7-9}),
one has%
\begin{equation*}
\left\Vert \left( t_{0,\infty }u_{\beta }^{\infty },t_{0,\infty }v_{\beta
}^{\infty }\right) \right\Vert <\left\Vert \left( u_{\beta }^{\infty
},v_{\beta }^{\infty }\right) \right\Vert <\left( \frac{3\sqrt{3}\pi \left(
p-2\right) \lambda ^{3/2}}{16\sqrt[3]{2}(4-p)\rho _{\infty }^{2}}\right)
^{1/2}.
\end{equation*}%
Similar to the argument of (\ref{3-6}), we have $\left( t_{0,\infty
}u_{\beta }^{\infty },t_{0,\infty }v_{\beta }^{\infty }\right) \in \mathcal{N%
}_{\beta }^{\left( 1\right) }.$ Hence, we conclude that%
\begin{equation*}
\alpha _{\beta }^{-}=\inf_{\left( u,v\right) \in \mathcal{N}_{\beta
}^{\left( 1\right) }}J_{\beta }\left( u,v\right) \leq J_{\beta }\left(
t_{0,\infty }u_{\beta }^{\infty },t_{0,\infty }v_{\beta }^{\infty }\right)
<\alpha _{\beta ,\infty }^{-}<\frac{p-2}{2p}S_{\lambda ,p}^{2p/\left(
p-2\right) }.
\end{equation*}%
The proof is complete.
\end{proof}

\textbf{We are now ready to prove Theorem \ref{t2}: }By Lemmas \ref{g7}, \ref%
{g4} and the Ekeland variational principle, for each $\beta >\beta
_{0}:=\max \left\{ \bar{\beta}_{0},\beta _{0}^{\infty }\right\} $ there
exists a bounded minimizing sequence $\left\{ \left( u_{n},v_{n}\right)
\right\} \subset \mathcal{N}_{\beta }^{\left( 1\right) }$ such that
\begin{equation*}
J_{\beta }\left( u_{n},v_{n}\right) =\alpha _{\beta }^{-}+o\left( 1\right)
\text{ and }J_{\beta }^{\prime }\left( u_{n},v_{n}\right) =o\left( 1\right)
\text{ in }\mathbf{H}^{-1}.
\end{equation*}%
Moreover, by Lemma \ref{L5.5}, we have%
\begin{equation}
\alpha _{\beta }^{-}<\alpha _{\beta ,\infty }^{-}<\frac{p-2}{2p}S_{\lambda
,p}^{2p/\left( p-2\right) }.  \label{7-15}
\end{equation}%
Since $\left\{ \left( u_{n},v_{n}\right) \right\} $ is bounded, there exists
a convergent subsequence, denoted as $\left\{ \left( u_{n},v_{n}\right)
\right\} $ for convenience, and $\left( u_{\beta },v_{\beta }\right) \in
\mathbf{H}$ such that as $n\rightarrow \infty $,%
\begin{equation}
\begin{array}{l}
\left( u_{n},v_{n}\right) \rightharpoonup \left( u_{\beta },v_{\beta
}\right) \text{ weakly in }\mathbf{H}, \\
\left( u_{n},v_{n}\right) \rightarrow \left( u_{\beta },v_{\beta }\right)
\text{ strongly in }L_{loc}^{p}(\mathbb{R}^{3})\times L_{loc}^{p}(\mathbb{R}%
^{3}), \\
\left( u_{n},v_{n}\right) \rightarrow \left( u_{\beta },v_{\beta }\right)
\text{ a.e. in }\mathbb{R}^{3}.%
\end{array}
\label{7-16}
\end{equation}%
This indicated that $\left( u_{\beta },v_{\beta }\right) $ is a solution of
system $\left( HF_{\beta }\right) .$ Next, we show that%
\begin{equation*}
\left( u_{n},v_{n}\right) \rightarrow \left( u_{\beta },v_{\beta }\right)
\text{ in }\mathbf{H}.
\end{equation*}%
To this end, we suppose the contrary. Then it has
\begin{equation}
\left\Vert \left( u_{\beta },v_{\beta }\right) \right\Vert
^{2}<\liminf_{n\rightarrow \infty }\left\Vert \left( u_{n},v_{n}\right)
\right\Vert ^{2},  \label{15-4}
\end{equation}%
which implies that
\begin{equation}
\left\Vert \left( u_{\beta },v_{\beta }\right) \right\Vert <\left( \frac{3%
\sqrt{3}\pi \left( p-2\right) \lambda ^{3/2}}{16\sqrt[3]{2}(4-p)\rho
_{\infty }^{2}}\right) ^{1/2},  \label{15-1}
\end{equation}%
since $\left\{ \left( u_{n},v_{n}\right) \right\} \subset \mathcal{N}_{\beta
}^{\left( 1\right) }.$ Thus, $\left( u_{\beta },v_{\beta }\right) \in
\mathcal{N}_{\beta }^{\left( 1\right) }\cup \left\{ \left( 0,0\right)
\right\} .$ Let $\left( w_{n},z_{n}\right) =\left( u_{n}-u_{\beta
},v_{n}-v_{\beta }\right) .$ Then by (\ref{7-16})--(\ref{15-1}), there
exists $c_{0}>0$ such that%
\begin{equation}
\left( w_{n},z_{n}\right) \rightharpoonup \left( 0,0\right) \text{ weakly in
}\mathbf{H}  \label{7-17}
\end{equation}%
and%
\begin{equation*}
c_{0}\leq \left\Vert \left( w_{n},z_{n}\right) \right\Vert ^{2}=\left\Vert
\left( u_{n},v_{n}\right) \right\Vert ^{2}-\left\Vert \left( u_{\beta
},v_{\beta }\right) \right\Vert ^{2}+o(1),
\end{equation*}%
which implies that%
\begin{equation}
\left\Vert \left( w_{n},z_{n}\right) \right\Vert <\left( \frac{3\sqrt{3}\pi
\left( p-2\right) \lambda ^{3/2}}{16\sqrt[3]{2}(4-p)\rho _{\infty }^{2}}%
\right) ^{1/2}\text{ for }n\text{ sufficiently large.}  \label{15-5}
\end{equation}%
On the other hand, it follows from the Brezis-Lieb Lemma \cite{BLi} and (\ref%
{7-17}) that%
\begin{equation*}
\left\Vert \left( w_{n},z_{n}\right) \right\Vert ^{2}=\left\Vert \left(
w_{n},z_{n}\right) \right\Vert _{V_{\infty }}^{2}+o\left( 1\right) ,
\end{equation*}%
\begin{equation*}
\int_{\mathbb{R}^{N}}F_{\beta }\left( u_{n},v_{n}\right) dx=\int_{\mathbb{R}%
^{N}}F_{\beta }\left( w_{n},z_{n}\right) dx+\int_{\mathbb{R}^{N}}F_{\beta
}\left( u_{\lambda ,\beta }^{\left( 1\right) },v_{\lambda ,\beta }^{\left(
1\right) }\right) dx+o(1)
\end{equation*}%
and%
\begin{eqnarray*}
\int_{\mathbb{R}^{3}}\rho \left( x\right) \phi _{\rho ,\left(
u_{n},v_{n}\right) }\left( u_{n}^{2}+v_{n}^{2}\right) dx &=&\int_{\mathbb{R}%
^{3}}\rho \left( x\right) \phi _{\rho ,\left( w_{n},z_{n}\right) }\left(
w_{n}^{2}+z_{n}^{2}\right) dx \\
&&+\int_{\mathbb{R}^{3}}\rho \left( x\right) \phi _{\rho ,\left( u_{\beta
},v_{\beta }\right) }\left( u_{\beta }^{2}+v_{\beta }^{2}\right) dx+o(1) \\
&=&\int_{\mathbb{R}^{3}}\rho _{\infty }\phi _{\rho _{\infty },\left(
w_{n},z_{n}\right) }\left( w_{n}^{2}+z_{n}^{2}\right) dx \\
&&+\int_{\mathbb{R}^{3}}\rho \left( x\right) \phi _{\rho ,\left( u_{\beta
},v_{\beta }\right) }\left( u_{\beta }^{2}+v_{\beta }^{2}\right) dx+o(1).
\end{eqnarray*}%
These imply that%
\begin{equation}
\left\Vert \left( w_{n},z_{n}\right) \right\Vert _{V_{\infty }}^{2}+\int_{%
\mathbb{R}^{3}}\rho _{\infty }\phi _{\rho _{\infty },\left(
w_{n},z_{n}\right) }\left( w_{n}^{2}+z_{n}^{2}\right) dx-\int_{\mathbb{R}%
^{3}}F_{\beta }\left( w_{n},z_{n}\right) dx=o\left( 1\right)  \label{15-6}
\end{equation}%
and%
\begin{equation}
J_{\beta }\left( u_{n},v_{n}\right) =J_{V_{\infty },\rho _{\infty },\beta
}^{\infty }\left( w_{n},z_{n}\right) +J_{\beta }\left( u_{\beta },v_{\beta
}\right) +o(1).  \label{15-7}
\end{equation}%
Thus, it follows from (\ref{15-5})--(\ref{15-7}) that $\alpha _{\beta
}^{-}\geq \alpha _{\beta ,\infty }^{-},$ which contradicts to (\ref{7-15}).
Therefore, we conclude that $\left( u_{n},v_{n}\right) \rightarrow \left(
u_{\beta },v_{\beta }\right) $ strongly in $\mathbf{H},$ which implies that $%
\left( u_{\beta },v_{\beta }\right) \in \mathcal{N}_{\beta }^{\left(
1\right) }$ and $J_{\beta }\left( u_{\beta },v_{\beta }\right) =\alpha
_{\beta }^{-},$ and so is $\left( \left\vert u_{\beta }\right\vert
,\left\vert v_{\beta }\right\vert \right) .$ According to Lemma \ref{g7}, we
may assume that $\left( u_{\beta },v_{\beta }\right) $ is a nonnegative
nontrivial critical point of $J_{\beta }$. Since $\alpha _{\beta }^{-}<\frac{%
p-2}{2p}S_{\lambda ,p}^{2p/\left( p-2\right) }$ by (\ref{7-6}), it follows
from Lemma \ref{l5} that $u_{\beta }\neq 0$ and $u_{\beta }\neq 0.$ The
proof is complete.

\textbf{We are now ready to prove Theorem \ref{t3}: }The proof directly
follows from Theorem \ref{t2} and Proposition \ref{t6}.

\section{Nonradial ground state solutions}

\begin{lemma}
\label{L5.2}Assume that $2<p<3$ and conditions $(D1)-(D2)$ and $\left(
D7\right) -\left( D8\right) $ hold. Then for every%
\begin{equation*}
\Lambda \left( \lambda _{0},k_{0}\right) <\beta <\frac{p}{2^{\frac{6-p}{2}}}%
\left( \frac{V_{\infty }}{3-p}\right) ^{3-p}\left( \frac{\rho _{\infty }}{p-2%
}\right) ^{p-2}-1,
\end{equation*}%
the following statements are true. \newline
$(i)$ There exists $K>0$ such that $\inf\limits_{\left( u,v\right) \in
\mathbf{H}_{r}}J_{\beta ,\varepsilon }(u,v)\geq -K$ for all $\varepsilon >0.$%
\newline
$(ii)$ $\inf\limits_{\left( u,v\right) \in \mathbf{H}_{r}}J_{\beta
,\varepsilon }(u,v)<0$ for $\varepsilon >0$ sufficiently small.
\end{lemma}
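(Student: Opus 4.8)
\textbf{Overall strategy.} The lemma concerns the functional $J_{\beta,\varepsilon}:=J_{\varepsilon,\beta}$ (the energy for the scaled potentials $V(\varepsilon x),\rho(\varepsilon x)$) restricted to the radial space $\mathbf{H}_r$. The plan is to treat the two statements separately. For $(i)$, I would establish a lower bound on $J_{\beta,\varepsilon}$ that is \emph{uniform in $\varepsilon$}, using exactly the pointwise argument already developed in the proof of Lemma~\ref{m5}: the key inequalities \eqref{2-0}--\eqref{2-00} and the splitting $J_\beta(u,v)\ge \frac14\|(u,v)\|^2 + \int(\tfrac14 Vu^2+\tfrac1{\sqrt8}\rho|u|^3-\tfrac{1+\beta}{p}|u|^p)dx + (\text{same in }v)$ from \eqref{3-0}. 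For $(ii)$, I would exhibit a single radial test pair whose $J_{\beta,\varepsilon}$-value is negative once $\varepsilon$ is small, borrowing the translated minimizer construction of Remark~\ref{r1.2} — but here, crucially, the translate $u_0(x-x_0/\varepsilon)$ is \emph{not} radial, so for the radial infimum I must use instead the unscaled (or suitably placed) minimizer of \eqref{2-17}.

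\textbf{Step 1 (uniform lower bound, part $(i)$).} Fix $\beta$ in the stated range and set $m_{\beta,\varepsilon}(x):=\inf_{s\ge0}\big(\tfrac14 V(\varepsilon x)s^2+\sqrt{1/8}\,\rho(\varepsilon x)s^3-\tfrac{1+\beta}{p}s^p\big)$. By Lemma~\ref{g3} (applied with $c=\rho(\varepsilon x)$, $d=V(\varepsilon x)$, noting the correspondence $m_\beta(x)=\inf_s f_{c,d}(s)\cdot s^2$-type expression) one has $m_{\beta,\varepsilon}(x)<0$ precisely when $V^{3-p}(\varepsilon x)\rho^{p-2}(\varepsilon x)<\tfrac{2^{(6-p)/2}}{p}(p-2)^{p-2}(3-p)^{3-p}(1+\beta)$, i.e. when $\varepsilon x$ lies in the set $\mathcal B$ of Remark~\ref{R1}. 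Since $\varepsilon x\in\mathcal B$ iff $x\in\varepsilon^{-1}\mathcal B$, we get $\int_{\mathbb R^3}m_{\beta,\varepsilon}(x)\,dx = \varepsilon^{-3}\int_{\mathcal B}m_\beta(y)\,dy$, and by Remark~\ref{R1} this is a finite negative number for each fixed $\varepsilon$; but this blows up as $\varepsilon\to0$, so a naive bound is not uniform. The fix: since $\lambda=\inf V>0$ and $\rho_{\min}=\inf\rho>0$ are attained independently of the scaling, one has $m_{\beta,\varepsilon}(x)\ge \inf_{s\ge0}\big(\tfrac14\lambda s^2+\sqrt{1/8}\,\rho_{\min}s^3-\tfrac{1+\beta}{p}s^p\big)=:-\kappa_0>-\infty$ for \emph{all} $x$ and all $\varepsilon$, while $m_{\beta,\varepsilon}(x)=0$ for $\varepsilon x\notin\mathcal B$. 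Hmm — but the measure $|\varepsilon^{-1}\mathcal B|\to\infty$. The resolution used in \cite{SWF1}-type arguments is that for $(u,v)\in\mathbf H_r$ one controls $\int m_{\beta,\varepsilon}(x)(u^2+v^2\dots)$ differently; in fact the cleanest route, and the one I would take, is: from \eqref{3-0}, $J_{\beta,\varepsilon}(u,v)\ge \frac14\|(u,v)\|_{V_\varepsilon}^2 + \int_{\mathbb R^3}\big[\tfrac14 V_\varepsilon u^2+\tfrac1{\sqrt8}\rho_\varepsilon|u|^3-\tfrac{1+\beta}{p}|u|^p\big]dx + (\text{same in }v)$, and then bound the $u$-integral below by $-C\int_{\{|u|\le R\}}|u|^p\,dx$ for a radius $R$ independent of $\varepsilon$ (since only $\lambda,\rho_{\min}$ enter the threshold where the integrand turns negative), then use the radial Strauss decay $|u(x)|\le C\|u\|_{\mathbf H}|x|^{-1}$ valid on $\mathbf H_r$ to conclude $\int_{\{|u|\le R\}}|u|^p\le C(R)\|(u,v)\|^{\,?}$ with a power $\le 2$, which is absorbed by $\tfrac14\|(u,v)\|^2$. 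This yields $J_{\beta,\varepsilon}(u,v)\ge -K$ with $K$ depending only on $p,\lambda,\rho_{\min},\beta$. \emph{This absorption step, exploiting radial compactness to make the bound $\varepsilon$-independent, is the main obstacle.}

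\textbf{Step 2 (negativity of the radial infimum, part $(ii)$).} Let $(u_0,v_0)\in\mathbf H_r\setminus\{(0,0)\}$ be the minimizer of \eqref{2-17} with $\theta=\lambda_0$, $k=k_0$ from Proposition~\ref{p1}. This pair is already radial, so it is an admissible competitor in $\mathbf H_r$ for \emph{any} $\varepsilon$. Compute, exactly as in Remark~\ref{r1.2} but without translating:
\begin{align*}
\int_{\mathbb R^3}V(\varepsilon x)(u_0^2+v_0^2)\,dx &= \int_{\mathbb R^3}V_\infty(u_0^2+v_0^2)\,dx + o(1)\quad(\varepsilon\to0)\quad\text{by }(D1),\\
\int_{\mathbb R^3}\rho(\varepsilon x)\phi_{\rho(\varepsilon\cdot),(u_0,v_0)}(u_0^2+v_0^2)\,dx &= \rho_\infty^2\int_{\mathbb R^3}\phi_{1,(u_0,v_0)}(u_0^2+v_0^2)\,dx + o(1)\quad\text{by }(D2).
\end{align*}
Here I would \emph{not} conclude via condition $(D3)$ (the pointwise one), but directly: substituting into $J_{\beta,\varepsilon}$ gives $J_{\beta,\varepsilon}(u_0,v_0)\to J_\beta^\infty(u_0,v_0)$ where $J_\beta^\infty$ uses the constant potentials $V_\infty,\rho_\infty$. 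It remains to check $J_\beta^\infty(u_0,v_0)<0$. Since $(u_0,v_0)$ achieves \eqref{2-17}, one has $\tfrac12\|(u_0,v_0)\|_{\lambda_0}^2+\tfrac{k_0}{4}\int\phi_{k_0,(u_0,v_0)}(u_0^2+v_0^2)-\tfrac1p\int(|u_0|^p+|v_0|^p) = \Lambda(\lambda_0,k_0)\cdot\tfrac2p\int|u_0|^{p/2}|v_0|^{p/2}$. Combined with the inequality $V_\infty^{3-p}\rho_\infty^{p-2}>\tfrac{2^{(6-p)/2}}{p}(p-2)^{p-2}(3-p)^{3-p}(1+\Lambda_0(\lambda_0,k_0))$ from $(D8)$ and $\beta>\Lambda(\lambda_0,k_0)$, an elementary estimate (of the same flavor as in the proof of Lemma~\ref{m5}, where $J_\beta(u_0,v_0)<0$ was shown) gives $J_\beta^\infty(u_0,v_0)<0$. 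Hence for $\varepsilon$ small, $\inf_{\mathbf H_r}J_{\beta,\varepsilon}\le J_{\beta,\varepsilon}(u_0,v_0)<0$, completing $(ii)$.

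\textbf{Remark on the obstacle.} The genuinely delicate point is Step~1: the set $\{x:\varepsilon x\in\mathcal B\}$ has volume $\sim\varepsilon^{-3}\to\infty$, so one cannot simply write $J_{\beta,\varepsilon}\ge |\mathcal B_\varepsilon|\inf m_\beta$. One must instead exploit that on $\mathbf H_r$ the mass $u^2+v^2$ cannot spread out: Strauss-type pointwise decay confines the region where $|u|$ is large (where $m_{\beta,\varepsilon}$ could contribute a large negative amount per unit mass) to a ball whose radius is controlled by $\|(u,v)\|$, and outside that ball the cubic term $\tfrac1{\sqrt8}\rho_\varepsilon|u|^3$ dominates $\tfrac{1+\beta}{p}|u|^p$ since $p<3$. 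Making this trade-off quantitative and $\varepsilon$-uniform is where the work lies; everything else follows the templates of Lemmas~\ref{m5}, \ref{L2-5} and Remark~\ref{r1.2}.
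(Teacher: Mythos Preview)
Your plan has genuine gaps in both parts.

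\textbf{Part $(i)$.} Once you pass to the pointwise inequality \eqref{3-0}, the nonlocal Coulomb energy has been entirely consumed, and what remains cannot yield an $\varepsilon$-uniform lower bound. Concretely: bounding the negative part of the pointwise integrand by $-\kappa_0$ on the set $D=\{s_1<|u|<s_2\}$ and then using Strauss decay to estimate $|D|\lesssim\|u\|^{3}$ gives only $J_{\beta,\varepsilon}(u,v)\ge\tfrac14\|(u,v)\|^{2}-C\|(u,v)\|^{3}$, which is unbounded below. Your stated hope that $\int_{\{|u|\le R\}}|u|^{p}\le C(R)\|(u,v)\|^{\le 2}$ via Strauss cannot be realised: the Strauss bound $|u(x)|\lesssim|x|^{-1}\|u\|$ still produces a factor $\|u\|^{p}$ after integration. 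The paper's route is much shorter and avoids \eqref{3-0} entirely: compare with the constant-coefficient functional,
\[
J_{\varepsilon,\beta}(u,v)\;\ge\;\tfrac12\|(u,v)\|_{\lambda}^{2}+\tfrac{\rho_{\min}^{2}}{4}\int_{\mathbb R^{3}}\phi_{(u,v)}(u^{2}+v^{2})\,dx-\tfrac1p\int_{\mathbb R^{3}}F_{\beta}(u,v)\,dx,
\]
and invoke \cite[Theorem~4.3]{R1} (equivalently the argument of Proposition~\ref{AT-1}) for the lower bound on $\mathbf H_{r}$. That argument \emph{retains} the Coulomb term and exploits its superlinear growth $\int\phi_{(u,v)}(u^{2}+v^{2})\gtrsim |D|^{3/2}$ on radial functions, which is exactly what beats $-\kappa_{0}|D|$.

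\textbf{Part $(ii)$.} Your limit is wrong: for fixed $x$ one has $\varepsilon x\to 0$ as $\varepsilon\to 0$, so by dominated convergence $\int V(\varepsilon x)(u_{0}^{2}+v_{0}^{2})\,dx\to V(0)\int(u_{0}^{2}+v_{0}^{2})\,dx$, not $V_{\infty}\int(u_{0}^{2}+v_{0}^{2})\,dx$. Worse, the target inequality $J_{V_{\infty},\rho_{\infty},\beta}^{\infty}(u_{0},v_{0})<0$ is \emph{false}: the upper bound on $\beta$ in the hypothesis is precisely the condition of Lemma~\ref{L2-5} with $a=V_{\infty}$, $b=\rho_{\infty}$, which forces $\inf_{\mathbf H}J_{V_{\infty},\rho_{\infty},\beta}^{\infty}\ge 0$. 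The paper instead bounds $J_{\varepsilon,\beta}(u_{0},v_{0})$ above by the $(\lambda_{0},k_{0})$-functional: since $(u_{0},v_{0})$ achieves $\Lambda(\lambda_{0},k_{0})$ and $\beta>\Lambda(\lambda_{0},k_{0})$, one has
\[
\tfrac12\|(u_{0},v_{0})\|_{\lambda_{0}}^{2}+\tfrac{k_{0}}{4}\int\phi_{k_{0},(u_{0},v_{0})}(u_{0}^{2}+v_{0}^{2})\,dx-\tfrac1p\int F_{\beta}(u_{0},v_{0})\,dx<0,
\]
and the task is to dominate $J_{\varepsilon,\beta}(u_{0},v_{0})$ by this quantity for small $\varepsilon$, which is where $(D8)$ and the computation of Remark~\ref{r1.2} enter.
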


\begin{proof}
$(i)$ Similar to the argument in \cite[Theorem 4.3]{R1} (or \cite[Lemma 2.5]%
{SW}), by conditions $(D1)-\left( D2\right) ,$ there exists $K>0$ such that
\begin{eqnarray*}
J_{\varepsilon ,\beta }(u,v) &\geq &\frac{1}{2}\Vert \left( u,v\right) \Vert
_{\lambda }^{2}+\frac{\rho _{\min }^{2}}{4}\int_{\mathbb{R}^{3}}\phi
_{u,v}\left( u^{2}+v^{2}\right) dx-\frac{1}{p}\int_{\mathbb{R}^{3}}F_{\beta
}\left( u,v\right) dx \\
&>&-K\text{ for all }\left( u,v\right) \in \mathbf{H}_{r},
\end{eqnarray*}%
which implies that $\inf\limits_{\left( u,v\right) \in \mathbf{H}%
_{r}}J_{\varepsilon ,\beta }(u,v)\geq -K.$

$(ii)$ Since $\left( u_{0},v_{0}\right) \in \mathbf{H}_{r}\setminus \left\{
\left( 0,0\right) \right\} $ is a minimizer of the minimization problem (\ref%
{2-17}) obtained in Proposition \ref{p1} for $\theta =\lambda _{0}$ and $%
k=k_{0},$ for all $\beta >\Lambda \left( \lambda _{0},k_{0}\right) ,$ it
follows from condition $\left( D8\right) $ and Remark \ref{r1.2} that
\begin{eqnarray*}
J_{\varepsilon ,\beta }(u_{0},v_{0}) &=&\frac{1}{2}\int_{\mathbb{R}%
^{3}}\left( |\nabla u_{0}|^{2}+V\left( \varepsilon x\right)
u_{0}^{2}+|\nabla v_{0}|^{2}+V\left( \varepsilon x\right) v_{0}^{2}\right) dx
\\
&&+\frac{1}{4}\int_{\mathbb{R}^{3}}\rho \left( \varepsilon x\right) \phi
_{\rho ,\left( u_{0},v_{0}\right) }\left( u_{0}^{2}+v_{0}^{2}\right) dx-%
\frac{1}{p}\int_{\mathbb{R}^{3}}F_{\beta }\left( u_{0},v_{0}\right) dx \\
&<&\frac{1}{2}\Vert \left( u_{0},v_{0}\right) \Vert _{\lambda _{0}}^{2}+%
\frac{k_{0}}{4}\int_{\mathbb{R}^{3}}\phi _{k_{0},\left( u_{0},v_{0}\right)
}\left( u_{0}^{2}+v_{0}^{2}\right) dx-\frac{1}{p}\int_{\mathbb{R}%
^{3}}F_{\beta }\left( u_{0},v_{0}\right) dx \\
&<&0\text{ for }\varepsilon >0\text{ sufficiently small,}
\end{eqnarray*}%
which implies that $\inf\limits_{\left( u,v\right) \in \mathbf{H}%
_{r}}J_{\beta ,\varepsilon }(u,v)<0$ for $\varepsilon >0$ sufficiently
small. The proof is complete.
\end{proof}

By adopting the multibump technique developed by Ruiz \cite{R2} (also see
\cite[Theorem 4.7]{SW}), we have the following lemma.

\begin{lemma}
\label{L5.1}Assume that $2<p<3$ and conditions $(D1)-\left( D2\right) $ and $%
(D8)$ hold. Then for every%
\begin{equation*}
\Lambda \left( \lambda _{0},k_{0}\right) <\beta <\frac{p}{2^{\frac{6-p}{2}}}%
\left( \frac{V_{\infty }}{3-p}\right) ^{3-p}\left( \frac{\rho _{\infty }}{p-2%
}\right) ^{p-2}-1,
\end{equation*}%
we have
\begin{equation*}
\alpha _{\varepsilon ,\beta }:=\inf_{(u,v)\in \mathbf{H}}J_{\varepsilon
,\beta }(u,v)\rightarrow -\infty \text{ as }\varepsilon \rightarrow 0^{+}.
\end{equation*}
\end{lemma}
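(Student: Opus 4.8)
The plan is to exploit the fact that, by the scaling $x \mapsto \varepsilon x$, the potentials $V_\varepsilon$ and $\rho_\varepsilon$ become flat on larger and larger balls, so that translating copies of a fixed test function far away feels almost like the constant-potential problem $(HF_{a,b,\beta}^\infty)$ with $a = V(x_0)$ and $b = \rho(x_0)$ — and, crucially, by condition $(D8)$ the relevant test function has \emph{negative} constant-coefficient energy. Summing $n$ widely separated such bumps and letting $n \to \infty$ will then drive the energy to $-\infty$. Concretely, let $(u_0,v_0) \in \mathbf{H}_r \setminus \{(0,0)\}$ be the minimizer of the minimization problem (\ref{2-17}) with $\theta = \lambda_0$, $k = k_0$, obtained in Proposition \ref{p1}, and set $u_{0,\varepsilon}(x) = u_0(x - \tfrac{x_0}{\varepsilon})$, $v_{0,\varepsilon}(x) = v_0(x - \tfrac{x_0}{\varepsilon})$. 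As observed in Remark \ref{r1.2}, conditions $(D1)$--$(D2)$ and $(D8)$ give
\begin{equation*}
J_{\varepsilon,\beta}(u_{0,\varepsilon},v_{0,\varepsilon}) \longrightarrow \frac{1}{2}\Vert(u_0,v_0)\Vert_{V(x_0)}^2 + \frac{\rho(x_0)}{4}\int_{\mathbb{R}^3}\phi_{\rho(x_0),(u_0,v_0)}(u_0^2+v_0^2)\,dx - \frac{1}{p}\int_{\mathbb{R}^3}F_\beta(u_0,v_0)\,dx
\end{equation*}
as $\varepsilon \to 0^+$, and since $V(x_0) < \lambda_0$, $\rho(x_0) < k_0$ and $\beta > \Lambda(\lambda_0,k_0)$, the right-hand side is strictly negative; call it $-c_0 < 0$. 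Hence $J_{\varepsilon,\beta}(u_{0,\varepsilon},v_{0,\varepsilon}) < -c_0/2$ for all $\varepsilon$ small.

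Next I would build the multibump competitor. Fix a large $R > 0$ so that $(u_0,v_0)$ is essentially supported in $B_R(0)$ (up to exponentially small tails), pick $n$ points $y_1,\dots,y_n \in \mathbb{R}^3$ with $|y_i - y_j|$ arbitrarily large, and define
\begin{equation*}
U_n(x) = \sum_{j=1}^n u_0\Bigl(x - \tfrac{x_0}{\varepsilon} - y_j\Bigr), \qquad W_n(x) = \sum_{j=1}^n v_0\Bigl(x - \tfrac{x_0}{\varepsilon} - y_j\Bigr).
\end{equation*}
With $\varepsilon$ fixed small (chosen after $n$, or jointly), the quadratic term $\Vert(U_n,W_n)\Vert^2$ and the pure-power term $\int F_\beta(U_n,W_n)$ are, up to interaction errors that decay with $\min_{i\neq j}|y_i-y_j|$, equal to $n$ times the corresponding quantities for a single bump. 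The Coulomb term $\int \rho_\varepsilon \phi_{\rho,(U_n,W_n)}(U_n^2+W_n^2)$ is the one that does \emph{not} split additively — it contains cross terms of order $n^2$ because $\phi$ has a long-range tail $\sim |x|^{-1}$. The standard device (Ruiz \cite{R2}) is to spread the bumps on a lattice or along a line so that the $i$-th bump sees a Coulomb potential from the others that is $O\bigl(\sum_{j\neq i}|y_i-y_j|^{-1}\bigr)$; choosing the spacing to grow fast enough (e.g. $|y_i-y_j| \ge n^2$ or placing them so that $\sum_{j\neq i}|y_i-y_j|^{-1} \to 0$) makes the total Coulomb contribution $o(n)$, hence negligible against the $-n c_0$ coming from the $n$ negative-energy bumps. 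This yields $J_{\varepsilon,\beta}(U_n,W_n) \le -\tfrac{n c_0}{2} + o(n) \to -\infty$, so that $\alpha_{\varepsilon,\beta} \le \inf_n J_{\varepsilon,\beta}(U_n,W_n) = -\infty$ for each small fixed $\varepsilon$; and since the negativity margin $c_0$ is uniform for $\varepsilon$ in a neighbourhood of $0$, one concludes $\alpha_{\varepsilon,\beta} \to -\infty$ as $\varepsilon \to 0^+$ (indeed it equals $-\infty$ identically for $\varepsilon$ small).

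The main obstacle is the bookkeeping of the Coulomb interaction energy across the $n$ bumps: one must verify carefully that the $O(n^2)$ cross terms are controlled by the decay of $\phi_{\rho,(u_0,v_0)}(x) = O(|x|^{-1})$ and the growth of the mutual distances, and that the nonlinear pure-power cross terms $|U_n|^{p/2}|W_n|^{p/2}$ localize (so that the $\beta$-coupling does not spoil additivity). This is exactly the multibump estimate carried out by Ruiz in \cite{R2} and adapted in \cite[Theorem 4.7]{SW}; I would invoke that argument essentially verbatim, checking only that the external-potential terms behave correctly under the translation $x \mapsto x - x_0/\varepsilon$, which is precisely the content of the computation in Remark \ref{r1.2}. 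Everything else — the strong convergence of $J_{\varepsilon,\beta}(u_{0,\varepsilon},v_{0,\varepsilon})$ to its limit, the exponential decay of $(u_0,v_0)$, the elementary algebra of summing $n$ copies — is routine.
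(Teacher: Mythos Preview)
Your multibump strategy is the right one and matches the paper's approach, but your conclusion contains a genuine error. You claim that for each fixed small $\varepsilon$ one can send $n \to \infty$ and obtain $\alpha_{\varepsilon,\beta} = -\infty$. This is false: for fixed $\varepsilon$, the region $\mathcal{D}_\varepsilon = \{x : V(\varepsilon x) < \lambda_0,\ \rho(\varepsilon x) < k_0\}$ is bounded (it sits inside a ball of radius $O(1/\varepsilon)$ around $x_0/\varepsilon$), so once $|y_j|$ is large the $j$-th bump lies where $V_\varepsilon \approx V_\infty$ and $\rho_\varepsilon \approx \rho_\infty$. By the upper bound on $\beta$ together with Lemma~\ref{L2-5}, the single-bump energy there is nonnegative, not $-c_0$, and the sum no longer drives the energy down. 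In fact, Remark~\ref{r1.2} shows that condition $(D3)$ holds for the $\varepsilon$-problem when $\varepsilon$ is small, so Lemma~\ref{m5} applies and $\alpha_{\varepsilon,\beta} > -\infty$ for every such $\varepsilon$; your parenthetical ``indeed it equals $-\infty$ identically for $\varepsilon$ small'' contradicts this.

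The fix is precisely what you gesture at with ``(chosen after $n$, or jointly)'': $\varepsilon$ must shrink with $n$, and this coupling is the heart of the argument rather than a technicality. The paper first truncates $(u_0,v_0)$ to a compactly supported pair $(u_{R_0},v_{R_0})$ with $J_\beta^\infty(u_{R_0},v_{R_0}) < 0$, then places $N$ disjoint translates along a line with spacing $N^3$, and takes $\varepsilon_N < 1/(N^4 + R_0)$ so that all $N$ bumps lie inside $B(x_0/\varepsilon_N, 1/\varepsilon_N) \subset \mathcal{D}_{\varepsilon_N}$. The spacing $N^3$ forces the Coulomb cross terms to be of order $(N^2-N)/(N^3 - 2R_0) \to 0$, and one obtains $J_{\varepsilon_N,\beta}(w_N,z_N) < N\, J_\beta^\infty(u_{R_0},v_{R_0}) + C_0 \to -\infty$; since the same construction works for any $\varepsilon \le \varepsilon_N$, the limit $\alpha_{\varepsilon,\beta} \to -\infty$ as $\varepsilon \to 0^+$ follows. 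Your sketch has the right ingredients, but you must tie $\varepsilon$ to $n$ explicitly and drop the claim that the infimum is $-\infty$ at fixed $\varepsilon$.
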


\begin{proof}
By conditions $(D1)-(D2)$ and $\left( D8\right) $, there exist a point $%
x_{0}\in \mathbb{R}^{3}$ and two positive constants $\lambda _{0},k_{0}$
such that%
\begin{equation*}
x_{0}\in \mathcal{D}:=\left\{ x\in \mathbb{R}^{3}:V_{\min }<V(x)<\lambda _{0}%
\text{ and }\rho _{\min }<\rho (x)<k_{0}\right\} .
\end{equation*}%
Since $\mathcal{D}$ is an open set, without loss of generality, we may
assume that
\begin{equation*}
B^{3}(x_{0},1)\subset \mathcal{D},
\end{equation*}%
where $B^{3}(x_{0},1)$ is a ball centered at $x_{0}$ with radius $1$ in $%
\mathbb{R}^{3}$. This implies that
\begin{equation}
B^{3}\left( \frac{x_{0}}{\varepsilon },\frac{1}{\varepsilon }\right) \subset
\mathcal{D}_{\varepsilon }:=\left\{ x\in \mathbb{R}^{3}:V_{\min
}<V(\varepsilon x)<\lambda _{0}\text{ and }\rho _{\min }<\rho (\varepsilon
x)<k_{0}\right\}  \label{5.0}
\end{equation}%
for all $\varepsilon >0.$ For any $R>0$, we define a cut--off function $\psi
_{R}\in C^{1}(\mathbb{R}^{2},[0,1])$ by
\begin{equation*}
\psi _{R}(x)=\left\{
\begin{array}{ll}
1, & \text{ if }\left\vert x\right\vert <\frac{R}{2}, \\
0, & \text{ if }\left\vert x\right\vert >R,%
\end{array}%
\right.
\end{equation*}%
and $\left\vert \nabla \psi _{R}\right\vert \leq 1$ in $\mathbb{R}^{2}$. Let
\begin{equation*}
\left( u_{R}(x),v_{R}(x)\right) =\left( \psi _{R}(x)u_{0}\left( x\right)
,\psi _{R}(x)v_{0}\left( x\right) \right) \text{ for any }x\in \mathbb{R}%
^{3},
\end{equation*}%
where $\left( u_{0},v_{0}\right) \in \mathbf{H}_{r}\setminus \left\{ \left(
0,0\right) \right\} $ is a minimizer of the minimization problem (\ref{2-17}%
) obtained in Proposition \ref{p1} for $\theta =\lambda _{0}$ and $k=k_{0}.$
Then $J_{\beta }^{\infty }(u_{0},v_{0})<0$ for all $\beta >\Lambda \left(
\lambda _{0},k_{0}\right) .$ It is obvious that as $n\rightarrow \infty ,$
we have%
\begin{equation}
\int_{\mathbb{R}^{3}}F_{\beta }\left( u_{R},v_{R}\right) dx\rightarrow \int_{%
\mathbb{R}^{2}}F_{\beta }(u_{0},v_{0})dx,  \label{5.1}
\end{equation}%
\begin{equation}
\Vert \left( u_{R},v_{R}\right) \Vert _{\mathbf{H}}^{2}\rightarrow \Vert
(u_{0},v_{0})\Vert _{\mathbf{H}}^{2}  \label{5.2}
\end{equation}%
and
\begin{equation}
\int_{\mathbb{R}^{3}}\phi _{\left( u_{R},v_{R}\right) }\left(
u_{R}^{2}+v_{R}^{2}\right) dx\rightarrow \int_{\mathbb{R}^{3}}\phi
_{(u_{0},v_{0})}\left( u_{0}^{2}+v_{0}^{2}\right) dx,  \label{5.3}
\end{equation}%
where $\int_{\mathbb{R}^{3}}\phi _{(u,v)}\left( u^{2}+v^{2}\right) dx=\int_{%
\mathbb{R}^{3}}\rho \left( x\right) \phi _{\rho ,(u,v)}\left(
u^{2}+v^{2}\right) dx$ with $\rho (x)\equiv 1$ on $\mathbb{R}^{3}$. Since $%
J_{\beta }^{\infty }\in C^{1}(\mathbf{H},\mathbb{R})$ and $J_{\beta
}^{\infty }(u_{0},v_{0})<0$ for all $\beta >\Lambda \left( \lambda
_{0},k_{0}\right) $, it follows from (\ref{5.1})--(\ref{5.3}) that there
exists $R_{0}>0$ such that
\begin{equation}
J_{\beta }^{\infty }(u_{R_{0}},v_{R_{0}})<0\text{ for all }\beta >\Lambda
\left( \lambda _{0},k_{0}\right) .  \label{5.4}
\end{equation}%
Let $0<\varepsilon _{N}<\frac{1}{N^{4}+R_{0}}$ and
\begin{equation*}
u_{R_{0},N}^{(i)}(x)=u_{R_{0}}\left( x+iN^{3}e-\frac{x_{0}}{\varepsilon _{N}}%
\right) \text{ and }v_{R_{0},N}^{(i)}(x)=v_{R_{0}}\left( x+iN^{3}e-\frac{%
x_{0}}{\varepsilon _{N}}\right) ,
\end{equation*}%
for $e\in S^{1}$ and $i=1,2,\ldots ,N$, where $N^{3}>2R_{0}$. Then we have
\begin{equation*}
\text{\textrm{supp}}u_{R_{0},N}^{(i)}(x)\subset B^{3}\left( \frac{x_{0}}{%
\varepsilon _{N}},\frac{1}{\varepsilon _{N}}\right) \text{ and \textrm{supp}}%
v_{R_{0},N}^{(i)}(x)\subset B^{3}\left( \frac{x_{0}}{\varepsilon _{N}},\frac{%
1}{\varepsilon _{N}}\right) \text{ for all }i=1,2,\ldots ,N.
\end{equation*}%
Obviously, $\varepsilon _{N}\rightarrow 0$ as $N\rightarrow \infty $. Then
by (\ref{5.0}), we deduce that
\begin{eqnarray}
&&\int_{\mathbb{R}^{3}}\left[ |\nabla u_{R_{0},N}^{(i)}|^{2}+V\left(
\varepsilon _{N}x\right) \left( u_{R_{0},N}^{(i)}\right) ^{2}+|\nabla
v_{R_{0},N}^{(i)}|^{2}+V\left( \varepsilon _{N}x\right) \left(
v_{R_{0},N}^{(i)}\right) ^{2}\right] dx  \notag \\
&<&\int_{\mathbb{R}^{3}}\left[ |\nabla u_{R_{0},N}^{(i)}|^{2}+\lambda
_{0}\left( u_{R_{0},N}^{(i)}\right) ^{2}+|\nabla
v_{R_{0},N}^{(i)}|^{2}+\lambda _{0}\left( v_{R_{0},N}^{(i)}\right) ^{2}%
\right] dx  \notag \\
&=&\int_{\mathbb{R}^{3}}\left( |\nabla u_{R_{0}}|^{2}+\lambda
_{0}u_{R_{0}}^{2}+|\nabla v_{R_{0}}|^{2}+\lambda _{0}v_{R_{0}}^{2}\right) dx,
\label{5.5}
\end{eqnarray}%
\begin{equation}
\int_{\mathbb{R}^{3}}F_{\beta }\left(
u_{R_{0},N}^{(i)},v_{R_{0},N}^{(i)}\right) dx=\int_{\mathbb{R}^{3}}F_{\beta
}(u_{R_{0}},v_{R_{0}})dx  \label{5.6}
\end{equation}%
and%
\begin{equation}
\int_{\mathbb{R}^{3}}\rho \left( \varepsilon x\right) \phi _{\rho
_{\varepsilon },\left( u_{R_{0},N}^{(i)},v_{R_{0},N}^{(i)}\right) }\left[
\left( u_{R_{0},N}^{(i)}\right) ^{2}+\left( v_{R_{0},N}^{(i)}\right) ^{2}%
\right] dx<k_{0}^{2}\int_{\mathbb{R}^{3}}\phi _{\left(
u_{R_{0}},v_{R_{0}}\right) }\left( u_{R_{0}}^{2}+v_{R_{0}}^{2}\right) dx
\label{5.7}
\end{equation}%
for all $e\in \mathbb{S}^{2}$ and $i=1,2,\ldots ,N.$ Let
\begin{equation*}
w_{R_{0},N}\left( x\right) =\sum_{i=1}^{N}u_{R_{0},N}^{\left( i\right) }%
\text{ and }z_{R_{0},N}\left( x\right) =\sum_{i=1}^{N}v_{R_{0},N}^{\left(
i\right) }.
\end{equation*}%
Observe that $w_{R_{0},N}$ is a sum of translation of $u_{R_{0}}.$ When $%
N^{3}\geq N_{0}^{3}>2R_{0}$, the summands have disjoint support. In such a
case, by (\ref{5.5})--(\ref{5.7}) we have%
\begin{eqnarray}
&&\int_{\mathbb{R}^{3}}\left( |\nabla w_{R_{0},N}|^{2}+V\left( \varepsilon
_{N}x\right) w_{R_{0},N}^{2}+|\nabla z_{R_{0},N}^{2}|^{2}+V\left(
\varepsilon _{N}x\right) z_{R_{0},N}^{2}\right) dx  \notag \\
&=&\sum_{i=1}^{N}\int_{\mathbb{R}^{3}}\left[ |\nabla
u_{R_{0},N}^{(i)}|^{2}+V\left( \varepsilon _{N}x\right) \left(
u_{R_{0},N}^{(i)}\right) ^{2}+|\nabla v_{R_{0},N}^{(i)}|^{2}+V\left(
\varepsilon _{N}\right) \left( v_{R_{0},N}^{(i)}\right) ^{2}\right] dx
\notag \\
&<&\sum_{i=1}^{N}\int_{\mathbb{R}^{3}}\left[ |\nabla
u_{R_{0},N}^{(i)}|^{2}+\lambda _{0}\left( u_{R_{0},N}^{(i)}\right)
^{2}+|\nabla v_{R_{0},N}^{(i)}|^{2}+\lambda _{0}\left(
v_{R_{0},N}^{(i)}\right) ^{2}\right] dx  \notag \\
&=&N\int_{\mathbb{R}^{3}}\left( |\nabla u_{R_{0}}|^{2}+\lambda
_{0}u_{R_{0}}^{2}+|\nabla v_{R_{0}}|^{2}+\lambda _{0}v_{R_{0}}^{2}\right) dx
\label{14}
\end{eqnarray}%
and%
\begin{equation}
\int_{\mathbb{R}^{3}}F_{\beta }\left( w_{R_{0},N},z_{R_{0},N}\right)
dx=\sum_{i=1}^{N}\int_{\mathbb{R}^{3}}F_{\beta }\left(
u_{R_{0},N}^{(i)},v_{R_{0},N}^{(i)}\right) dx=N\int_{\mathbb{R}^{3}}F_{\beta
}(u_{R_{0}},v_{R_{0}})dx  \label{15}
\end{equation}%
and%
\begin{eqnarray*}
&&\int_{\mathbb{R}^{3}}\rho \left( \varepsilon x\right) \phi _{\rho
_{\varepsilon },\left( w_{R_{0},N},z_{R_{0},N}\right) }\left(
w_{R_{0},N}^{2}+z_{R_{0},N}^{2}\right) dx \\
&=&\int_{\mathbb{R}^{3}}\int_{\mathbb{R}^{3}}\frac{\rho \left( \varepsilon
x\right) \rho \left( \varepsilon y\right) w_{R_{0},N}^{2}\left( x\right)
w_{R_{0},N}^{2}\left( y\right) }{\left\vert x-y\right\vert }dxdy+\int_{%
\mathbb{R}^{3}}\int_{\mathbb{R}^{3}}\frac{\rho \left( \varepsilon x\right)
\rho \left( \varepsilon y\right) z_{R_{0},N}^{2}\left( x\right)
z_{R_{0},N}^{2}\left( y\right) }{\left\vert x-y\right\vert }dxdy \\
&&+2\int_{\mathbb{R}^{3}}\int_{\mathbb{R}^{3}}\frac{\rho \left( \varepsilon
x\right) \rho \left( \varepsilon y\right) w_{R_{0},N}^{2}\left( x\right)
z_{R_{0},N}^{2}\left( y\right) }{\left\vert x-y\right\vert }dxdy \\
&=&\sum_{i=1}^{N}\int_{\mathbb{R}^{3}}\int_{\mathbb{R}^{3}}\frac{\rho \left(
\varepsilon x\right) \rho \left( \varepsilon y\right) \left(
u_{R_{0},N}^{\left( i\right) }\right) ^{2}\left( x\right) \left(
u_{R_{0},N}^{\left( i\right) }\right) ^{2}\left( y\right) }{\left\vert
x-y\right\vert }dxdy \\
&&+\sum_{i=1}^{N}\int_{\mathbb{R}^{3}}\int_{\mathbb{R}^{3}}\frac{\rho \left(
\varepsilon x\right) \rho \left( \varepsilon y\right) \left(
v_{R_{0},N}^{\left( i\right) }\right) ^{2}\left( x\right) \left(
v_{R_{0},N}^{\left( i\right) }\right) ^{2}\left( y\right) }{\left\vert
x-y\right\vert }dxdy \\
&&+2\sum_{i\neq j}^{N}\int_{\mathbb{R}^{3}}\int_{\mathbb{R}^{3}}\frac{\rho
\left( \varepsilon x\right) \rho \left( \varepsilon y\right) \left(
u_{R_{0},N}^{\left( i\right) }\right) ^{2}\left( x\right) \left(
v_{R_{0},N}^{\left( j\right) }\right) ^{2}\left( y\right) }{\left\vert
x-y\right\vert }dxdy \\
&<&k_{0}^{2}N\int_{\mathbb{R}^{3}}\phi _{\left( u_{R_{0}},v_{R_{0}}\right)
}\left( u_{R_{0}}^{2}+v_{R_{0}}^{2}\right) dx+2\sum_{i\neq j}^{N}\int_{%
\mathbb{R}^{3}}\int_{\mathbb{R}^{3}}\frac{\rho \left( \varepsilon x\right)
\rho \left( \varepsilon y\right) \left( u_{R_{0},N}^{\left( i\right)
}\right) ^{2}\left( x\right) \left( v_{R_{0},N}^{\left( j\right) }\right)
^{2}\left( y\right) }{\left\vert x-y\right\vert }dxdy.
\end{eqnarray*}%
A straightforward calculation shows that
\begin{eqnarray*}
&&\sum_{i\neq j}^{N}\int_{\mathbb{R}^{3}}\int_{\mathbb{R}^{3}}\frac{\rho
\left( \varepsilon x\right) \rho \left( \varepsilon y\right) \left(
u_{R_{0},N}^{\left( i\right) }\right) ^{2}\left( x\right) \left(
u_{R_{0},N}^{\left( j\right) }\right) ^{2}\left( y\right) }{\left\vert
x-y\right\vert }dxdy \\
&\leq &\rho _{\max }^{2}\sum_{i\neq j}^{N}\int_{\mathbb{R}^{3}}\int_{\mathbb{%
R}^{3}}\frac{\left( u_{R_{0},N}^{\left( i\right) }\right) ^{2}\left(
x\right) \left( u_{R_{0},N}^{\left( j\right) }\right) ^{2}\left( y\right) }{%
4\pi \left\vert x-y\right\vert }dxdy \\
&\leq &\frac{\rho _{\max }^{2}(N^{2}-N)}{N^{3}-2R_{0}}\left( \int_{\mathbb{R}%
^{3}}u_{0}^{2}\left( x\right) dx\right) ^{2}\left( \int_{\mathbb{R}%
^{3}}v_{0}^{2}\left( x\right) dx\right) ^{2},
\end{eqnarray*}%
which implies that%
\begin{equation}
\sum_{i\neq j}^{N}\int_{\mathbb{R}^{3}}\int_{\mathbb{R}^{3}}\frac{\rho
\left( \varepsilon x\right) \rho \left( \varepsilon y\right) \left(
u_{R_{0},N}^{\left( i\right) }\right) ^{2}\left( x\right) \left(
u_{R_{0},N}^{\left( j\right) }\right) ^{2}\left( y\right) }{\left\vert
x-y\right\vert }dxdy\rightarrow 0\text{ as }N\rightarrow \infty .  \label{17}
\end{equation}%
Therefore, it follows from (\ref{5.4}) and (\ref{14})--(\ref{17}) that%
\begin{equation*}
J_{\varepsilon ,\beta }\left( w_{R_{0},N},z_{R_{0},N}\right) <NJ_{\beta
}^{\infty }\left( u_{R_{0}},v_{R_{0}}\right) +C_{0}\ \text{for some }C_{0}>0
\end{equation*}%
and
\begin{equation*}
J_{\varepsilon ,\beta }\left( w_{R_{0},N},z_{R_{0},N}\right) \rightarrow
-\infty \text{ as }N\rightarrow \infty ,
\end{equation*}%
which implies that $\alpha _{\varepsilon ,\beta }:=\inf_{(u,v)\in \mathbf{H}%
}J_{\varepsilon ,\beta }(u,v)\rightarrow -\infty $ as $\varepsilon
\rightarrow 0^{+}.$ This completes the proof.
\end{proof}

Define
\begin{equation*}
\delta _{\varepsilon ,\beta }:=\inf\limits_{\left( u,v\right) \in \mathbf{H}%
_{r}}J_{\varepsilon ,\beta }(u,v).
\end{equation*}%
Then by Lemmas \ref{L5.2} and \ref{L5.1}, we have the following result.

\begin{lemma}
\label{L5.3}Assume that $2<p<3$ and conditions $(D1)-(D2)$ and $\left(
D7\right) -\left( D8\right) $ hold. Then for every%
\begin{equation*}
\Lambda \left( \lambda _{0},k_{0}\right) <\beta <\frac{p}{2^{\frac{6-p}{2}}}%
\left( \frac{V_{\infty }}{3-p}\right) ^{3-p}\left( \frac{\rho _{\infty }}{p-2%
}\right) ^{p-2}-1,
\end{equation*}%
there exists $K>0$ such that
\begin{equation*}
\alpha _{\varepsilon ,\beta }<-K\leq \delta _{\varepsilon ,\beta }\text{ for
}\varepsilon >0\text{ sufficiently small.}
\end{equation*}
\end{lemma}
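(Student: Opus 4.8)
The plan is to read the conclusion off the two preceding lemmas, the only care needed being the order in which the constants are chosen. First I would fix $K>0$: by Lemma~\ref{L5.2}$(i)$, applied with the given $\beta$, there is a constant $K>0$ depending on $p,\lambda,\rho_{\min},V_\infty,\rho_\infty,\beta$ and on the minimizer $(u_0,v_0)$ of the problem (\ref{2-17}), but crucially \emph{not} on $\varepsilon$, such that
\begin{equation*}
\delta_{\varepsilon,\beta}=\inf_{(u,v)\in\mathbf{H}_r}J_{\varepsilon,\beta}(u,v)\geq -K\qquad\text{for every }\varepsilon>0 .
\end{equation*}
This is already the right-hand inequality $-K\leq\delta_{\varepsilon,\beta}$, and it holds uniformly in $\varepsilon$.

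With $K$ now frozen, I would invoke Lemma~\ref{L5.1}: for the same range of $\beta$ one has
\begin{equation*}
\alpha_{\varepsilon,\beta}=\inf_{(u,v)\in\mathbf{H}}J_{\varepsilon,\beta}(u,v)\longrightarrow-\infty\qquad\text{as }\varepsilon\to 0^{+} .
\end{equation*}
Hence there is $\varepsilon_0=\varepsilon_0(K)>0$ such that $\alpha_{\varepsilon,\beta}<-K$ whenever $0<\varepsilon<\varepsilon_0$. Combining the two displays gives
\begin{equation*}
\alpha_{\varepsilon,\beta}<-K\leq\delta_{\varepsilon,\beta}\qquad\text{for }\varepsilon>0\text{ sufficiently small,}
\end{equation*}
which is exactly the assertion. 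Note that the inclusion $\mathbf{H}_r\subset\mathbf{H}$ only yields the trivial inequality $\alpha_{\varepsilon,\beta}\leq\delta_{\varepsilon,\beta}$; what the lemma adds is the \emph{strict} separation by a fixed gap, and this is precisely the combination of the $\varepsilon$-uniform lower bound on $\delta_{\varepsilon,\beta}$ (Lemma~\ref{L5.2}$(i)$) with the fact that $\alpha_{\varepsilon,\beta}\to-\infty$ (Lemma~\ref{L5.1}).

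There is essentially no remaining obstacle at this stage: the two substantial ingredients sit upstream. Lemma~\ref{L5.1} is the nontrivial one, proved by the multibump device of Ruiz \cite{R2} --- inside the region $\mathcal{D}_\varepsilon$ where $V$ and $\rho$ are both small one packs $N$ mutually disjointly supported translates of a fixed bump with $J_\beta^\infty$-energy $<0$, the self-energy terms then scaling like $N$ while the cross Coulomb terms stay bounded, and one lets $N\to\infty$. Lemma~\ref{L5.2}$(i)$ is the coercivity-type lower bound extracted from the Lions-type inequalities (\ref{2-0})--(\ref{2-00}), which control the Coulomb energy from below by $\int|\nabla u|^2$. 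The single point to be careful about is the quantifier order --- $K$ must be selected first, from Lemma~\ref{L5.2}$(i)$, and only then the threshold $\varepsilon_0$, from Lemma~\ref{L5.1}; since $\alpha_{\varepsilon,\beta}\to-\infty$, any admissible $K$ works.
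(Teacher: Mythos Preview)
Your proposal is correct and matches the paper's approach exactly: the paper itself states the lemma as an immediate consequence of Lemmas~\ref{L5.2} and~\ref{L5.1}, without further argument. Your attention to the quantifier order (fix $K$ from the $\varepsilon$-uniform bound first, then choose $\varepsilon_0$) is precisely the right reading of how the two lemmas combine.
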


\textbf{We are now ready to prove Theorem \ref{th1.4}.} By conditions $%
(D1)-(D2)$ and $(D8)$, there exist a point $x_{0}\in \mathbb{R}^{3}$ and two
positive constants $\lambda _{0},k_{0}$ such that%
\begin{equation*}
x_{0}\in \mathcal{D}:=\left\{ x\in \mathbb{R}^{3}:V_{\min }<V(x)<\lambda _{0}%
\text{ and }\rho _{\min }<\rho (x)<k_{0}\right\} .
\end{equation*}%
Let $\left( u_{0},v_{0}\right) \in \mathbf{H}_{r}\setminus \left\{ \left(
0,0\right) \right\} $ is a minimizer of the minimization problem (\ref{2-17}%
) obtained in Proposition \ref{p1} for $\theta =\lambda _{0}$ and $k=k_{0}$.
Define $u_{\varepsilon }(x)=u_{0}(x-\frac{x_{0}}{\varepsilon })$ and $%
v_{\varepsilon }(x)=v_{0}(x-\frac{x_{0}}{\varepsilon })$, where $x_{0}\in
\mathbb{R}^{3}$ as in condition $\left( D8\right) .$ Then by Remark \ref%
{r1.2}, we know that when $V(x)$ and $\rho \left( x\right) $ are replaced by
$V(\varepsilon x)$ and $\rho \left( \varepsilon x\right) ,$ respectively,
condition $(D3)$ still holds for $\varepsilon >0$ sufficiently small.
Therefore, by Theorem \ref{t1}, system $(HF_{\varepsilon ,\beta })$ admits a
vectorial ground state solution $\left( u_{\varepsilon ,\beta }^{\left(
1\right) },v_{\varepsilon ,\beta }^{\left( 1\right) }\right) \in \mathbf{H}$
satisfying $J_{\varepsilon ,\beta }\left( u_{\varepsilon ,\beta }^{\left(
1\right) },v_{\varepsilon ,\beta }^{\left( 1\right) }\right) <J_{\beta
}^{\infty }(u_{0},v_{0})<0.$ Moreover, it follows from Lemma \ref{L5.3} that%
\begin{equation*}
J_{\varepsilon ,\beta }\left( u_{\varepsilon ,\beta }^{\left( 1\right)
},v_{\varepsilon ,\beta }^{\left( 1\right) }\right) =\alpha _{\varepsilon
,\beta }<-K\leq \delta _{\varepsilon ,\beta }\text{ for }\varepsilon >0\text{
sufficiently small,}
\end{equation*}%
which implies that $\left( u_{\varepsilon ,\beta }^{\left( 1\right)
},v_{\varepsilon ,\beta }^{\left( 1\right) }\right) $ is a nonradial
vectorial ground state solution of system $(HF_{\varepsilon ,\beta })$. The
proof is complete.

\section{Appendix}

\begin{proposition}
\label{AT-1}Assume that $2<p<3$ and $\theta ,k>0.$ Then the following
statements are true.\newline
$\left( i\right) $ $\Lambda \left( \theta ,k\right) \geq \frac{p}{2}\left(
\frac{\lambda }{3-p}\right) ^{3-p}\left( \frac{k}{p-2}\right) ^{p-2}-1;$%
\newline
$\left( ii\right) $ $\Lambda \left( \theta ,k\right) $ is achieved, i.e.
there exists $\left( u_{0},v_{0}\right) \in \mathbf{H}_{r}\setminus \left\{
\left( 0,0\right) \right\} $ such that
\begin{equation*}
\Lambda \left( \theta ,k\right) =\frac{\frac{1}{2}\left\Vert \left(
u_{0},v_{0}\right) \right\Vert _{\theta }^{2}+\frac{k}{4}\int_{\mathbb{R}%
^{3}}\phi _{k,\left( u_{0},v_{0}\right) }\left( u_{0}^{2}+v_{0}^{2}\right)
dx-\frac{1}{p}\int_{\mathbb{R}^{3}}(\left\vert u_{0}\right\vert
^{p}+\left\vert v_{0}\right\vert ^{p})dx}{\frac{2}{p}\int_{\mathbb{R}%
^{3}}\left\vert u_{0}\right\vert ^{\frac{^{p}}{2}}\left\vert
v_{0}\right\vert ^{\frac{p}{2}}dx}>0.
\end{equation*}
\end{proposition}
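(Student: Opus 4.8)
The plan is to argue $(i)$ as for the lower bound on $\overline{\Lambda}_{0}$ in the proof of Theorem~\ref{t1-2}$(i)$, and $(ii)$ by the direct method exploiting the fibering structure of Section~3.

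For $(i)$, fix $(u,v)\in\mathbf{H}_{r}\setminus\{(0,0)\}$. Integrating by parts in $\int_{\mathbb{R}^{3}}\nabla\phi_{k,(u,v)}\cdot\nabla|u|\,dx$, using $\|\nabla\phi_{k,(u,v)}\|_{2}^{2}=k\int_{\mathbb{R}^{3}}\phi_{k,(u,v)}(u^{2}+v^{2})\,dx$ and Young's inequality with weight $a=2$ (the sharp version of \eqref{2-0}--\eqref{2-00}), one obtains
\begin{equation*}
\frac{1}{2}\int_{\mathbb{R}^{3}}\bigl(|\nabla u|^{2}+|\nabla v|^{2}\bigr)\,dx+\frac{k}{4}\int_{\mathbb{R}^{3}}\phi_{k,(u,v)}\bigl(u^{2}+v^{2}\bigr)\,dx\ \geq\ \frac{k}{2}\int_{\mathbb{R}^{3}}\bigl(u^{2}+v^{2}\bigr)\bigl(|u|+|v|\bigr)\,dx\ \geq\ \frac{k}{2}\int_{\mathbb{R}^{3}}\bigl(|u|^{3}+|v|^{3}\bigr)\,dx .
\end{equation*}
Hence the numerator in \eqref{2-17} is at least $\int_{\mathbb{R}^{3}}\bigl[(\tfrac{\theta}{2}u^{2}+\tfrac{k}{2}|u|^{3}-\tfrac{1}{p}|u|^{p})+(\tfrac{\theta}{2}v^{2}+\tfrac{k}{2}|v|^{3}-\tfrac{1}{p}|v|^{p})\bigr]\,dx$, and the scalar weighted Young inequality with exponents $3-p$ and $p-2$ gives $\tfrac{\theta}{2}s^{2}+\tfrac{k}{2}s^{3}\geq\tfrac{1}{2}\bigl(\tfrac{\theta}{3-p}\bigr)^{3-p}\bigl(\tfrac{k}{p-2}\bigr)^{p-2}s^{p}=:c_{\ast}s^{p}$ for $s\geq0$; thus the numerator is $\geq(c_{\ast}-\tfrac{1}{p})\int_{\mathbb{R}^{3}}(|u|^{p}+|v|^{p})\,dx$. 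Since Young's inequality also bounds the denominator by $\tfrac{2}{p}\int_{\mathbb{R}^{3}}|u|^{p/2}|v|^{p/2}\,dx\leq\tfrac{1}{p}\int_{\mathbb{R}^{3}}(|u|^{p}+|v|^{p})\,dx$, dividing yields $\Lambda(\theta,k)\geq pc_{\ast}-1=\tfrac{p}{2}\bigl(\tfrac{\theta}{3-p}\bigr)^{3-p}\bigl(\tfrac{k}{p-2}\bigr)^{p-2}-1$. This step is routine; the one thing to watch is the choice $a=2$ above, which is what gives the factor $\tfrac12$ in $c_{\ast}$.

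For $(ii)$, let $\{(u_{n},v_{n})\}\subset\mathbf{H}_{r}\setminus\{(0,0)\}$ be a minimizing sequence. Because $2<p<4$, the quotient in \eqref{2-17} evaluated at $(tu,tv)$ tends to $+\infty$ as $t\to0^{+}$ and as $t\to\infty$, so its minimum in $t$ is attained; after rescaling we may assume $\{(u_{n},v_{n})\}$ lies on the set where $(4-p)k\int_{\mathbb{R}^{3}}\phi_{k,(u_{n},v_{n})}(u_{n}^{2}+v_{n}^{2})\,dx=2(p-2)\|(u_{n},v_{n})\|_{\theta}^{2}$, on which the numerator reduces to $\tfrac{1}{4-p}\|(u_{n},v_{n})\|_{\theta}^{2}-\tfrac{1}{p}\int_{\mathbb{R}^{3}}(|u_{n}|^{p}+|v_{n}|^{p})\,dx$. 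Combining this identity with the coercive bound from $(i)$ (which makes the numerator comparable to $\int_{\mathbb{R}^{3}}(|u_{n}|^{p}+|v_{n}|^{p})\,dx$), with Gagliardo--Nirenberg, and with Lemma~\ref{L2-3} (which controls the Coulomb term by $\|(u_{n},v_{n})\|_{\theta}^{4}$), one shows $\{(u_{n},v_{n})\}$ is bounded in $\mathbf{H}_{r}$ and that $\int_{\mathbb{R}^{3}}|u_{n}|^{p/2}|v_{n}|^{p/2}\,dx$ stays bounded away from $0$. Passing to a subsequence, $(u_{n},v_{n})\rightharpoonup(u_{0},v_{0})$ in $\mathbf{H}$; by the compact embedding $H^{1}_{rad}(\mathbb{R}^{3})\hookrightarrow L^{q}(\mathbb{R}^{3})$, $2<q<6$, the convergence is strong in $L^{p}$, $L^{3}$ and $L^{12/5}$, so $\int_{\mathbb{R}^{3}}|u_{0}|^{p/2}|v_{0}|^{p/2}\,dx>0$, hence $(u_{0},v_{0})\neq(0,0)$, and by Hardy--Littlewood--Sobolev the Coulomb functional passes to the limit. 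Weak lower semicontinuity of $\|\cdot\|_{\theta}^{2}$ together with a Brezis--Lieb decomposition excluding a nontrivial remainder shows that $(u_{0},v_{0})$ achieves the infimum, whose value is positive by $(i)$.

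The main obstacle is the compactness in $(ii)$: one must rule out both loss of mass at spatial infinity --- handled by the radial symmetry via the compact Sobolev embedding --- and the degeneration $\int_{\mathbb{R}^{3}}|u_{n}|^{p/2}|v_{n}|^{p/2}\,dx\to0$; this is why it is essential to stay on the fibering constraint and to use the coercive estimate of $(i)$, which forces $\int_{\mathbb{R}^{3}}(|u_{n}|^{p}+|v_{n}|^{p})\,dx$, and therefore the product integral, to remain bounded below. Establishing this non-degeneracy, together with the a priori bound on $\|(u_{n},v_{n})\|_{\theta}$, is the delicate part; once the weak limit is known to be nontrivial and to satisfy the constraint, the conclusion follows from standard semicontinuity.
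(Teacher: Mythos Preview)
Your argument for $(i)$ is correct and essentially coincides with the paper's: both use the Lions identity $-\Delta\phi_{k,(u,v)}=k(u^{2}+v^{2})$ together with Young's inequality to absorb the gradient and Coulomb terms into $\tfrac{k}{2}\int(|u|^{3}+|v|^{3})\,dx$, then apply the scalar weighted AM--GM bound $\tfrac{\theta}{2}s^{2}+\tfrac{k}{2}s^{3}\ge\tfrac12\bigl(\tfrac{\theta}{3-p}\bigr)^{3-p}\bigl(\tfrac{k}{p-2}\bigr)^{p-2}s^{p}$ and the denominator estimate $2|u|^{p/2}|v|^{p/2}\le|u|^{p}+|v|^{p}$.

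For $(ii)$, however, your boundedness argument has a genuine gap. After rescaling onto the fibering constraint $(4-p)k\int\phi_{k,(u_{n},v_{n})}(u_{n}^{2}+v_{n}^{2})\,dx=2(p-2)\|(u_{n},v_{n})\|_{\theta}^{2}$, the ingredients you invoke --- the estimate from $(i)$, Gagliardo--Nirenberg, and Lemma~\ref{L2-3} --- only yield relations of the type $A_{n}\sim C_{n}$, $B_{n}\sim A_{n}$, $B_{n}\lesssim A_{n}^{2}$, $C_{n}\lesssim A_{n}^{p/2}$ (with $A_{n}=\|(u_{n},v_{n})\|_{\theta}^{2}$, $B_{n}$ the Coulomb term, $C_{n}$ the $L^{p}$ mass). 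All of these are compatible with $A_{n}\to\infty$: the upper bounds on $B_{n}$ and $C_{n}$ have powers of $A_{n}$ exceeding one, so they impose only \emph{lower} bounds on $A_{n}$, never an upper bound. The competition of powers in the range $2<p<3$ is precisely what makes coercivity delicate here, and the standard inequalities do not close.

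The paper handles boundedness by a different, Ruiz-type argument specific to the radial class. One fixes $C_{1}>\Lambda(\theta,k)$, observes that the auxiliary functional $\widetilde J(u,v)=N(u,v)-\tfrac{2C_{1}}{p}\int|u|^{p/2}|v|^{p/2}\,dx$ is eventually negative along the sequence, and then --- using the Lions inequality with weight $\tfrac14$ --- reduces to the scalar function $f_{\theta,k}(s)=\tfrac{\theta}{4}s^{2}+\tfrac{k}{\sqrt8}s^{3}-\tfrac{1+C_{1}}{p}s^{p}$. The negativity of $\widetilde J$ forces the measures $|D_{n}^{(i)}|$ of the level sets $\{s_{1}<|u_{n}|<s_{2}\}$, $\{s_{1}<|v_{n}|<s_{2}\}$ to blow up like $\|(u_{n},v_{n})\|_{\theta}^{2}$. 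On the other hand, the Strauss radial decay $|z(x)|\le c_{0}|x|^{-1}\|z\|_{H^{1}}$ bounds the radii $\rho_{n}^{(i)}$ of these sets, and a direct lower bound on the Coulomb energy over $D_{n}^{(i)}\times D_{n}^{(j)}$ gives $|D_{n}^{(1)}|+|D_{n}^{(2)}|\lesssim(|D_{n}^{(1)}|+|D_{n}^{(2)}|)^{-1/2}\cdot(|D_{n}^{(1)}|+|D_{n}^{(2)}|)$ after combining, forcing $|D_{n}^{(1)}|+|D_{n}^{(2)}|$ bounded --- a contradiction. This is the step your proposal is missing; once boundedness is known, the rest (compact radial embedding, convergence of the Coulomb term, weak lower semicontinuity of the norm) proceeds as you indicate and as in the paper. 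A secondary point: the claim that $\int(|u_{n}|^{p}+|v_{n}|^{p})\,dx$ bounded below implies $\int|u_{n}|^{p/2}|v_{n}|^{p/2}\,dx$ bounded below is false in general (take $v_{n}\to0$), so the non-degeneracy of the denominator needs a separate argument via the limit ratio.
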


\begin{proof}
$\left( i\right) $ Similar to (\ref{3-4})--(\ref{3-5}), we have
\begin{equation}
\frac{k}{2}\int_{\mathbb{R}^{3}}(\left\vert u\right\vert
^{3}+v^{2}\left\vert u\right\vert )dx\leq \frac{1}{2}\int_{\mathbb{R}%
^{3}}\left\vert \nabla u\right\vert ^{2}dx+\frac{k}{8}\int_{\mathbb{R}%
^{3}}\phi _{k,\left( u,v\right) }\left( u^{2}+v^{2}\right) dx  \label{A-2}
\end{equation}%
and%
\begin{equation}
\frac{k}{2}\int_{\mathbb{R}^{3}}(u^{2}\left\vert v\right\vert +\left\vert
v\right\vert ^{3})dx\leq \frac{1}{2}\int_{\mathbb{R}^{3}}\left\vert \nabla
v\right\vert ^{2}dx+\frac{k}{8}\int_{\mathbb{R}^{3}}\phi _{k,\left(
u,v\right) }\left( u^{2}+v^{2}\right) dx  \label{A-3}
\end{equation}%
for all $\left( u,v\right) \in \mathbf{H}_{r}\setminus \left\{ \left(
0,0\right) \right\} .$ Then it follows from (\ref{A-2})--(\ref{A-3}) that%
\begin{eqnarray*}
&&\frac{\frac{1}{2}\left\Vert \left( u,v\right) \right\Vert _{\theta }^{2}+%
\frac{k}{4}\int_{\mathbb{R}^{3}}\phi _{k,\left( u,v\right) }\left(
u^{2}+v^{2}\right) dx-\frac{1}{p}\int_{\mathbb{R}^{3}}(\left\vert
u\right\vert ^{p}+\left\vert v\right\vert ^{p})dx}{\frac{2}{p}\int_{\mathbb{R%
}^{3}}\left\vert u\right\vert ^{\frac{^{p}}{2}}\left\vert v\right\vert ^{%
\frac{p}{2}}dx} \\
&\geq &\frac{\frac{1}{2}\int_{\mathbb{R}^{3}}\theta \left(
u^{2}+v^{2}\right) dx+\frac{k}{2}\int_{\mathbb{R}^{3}}(\left\vert
u\right\vert ^{3}+\left\vert v\right\vert ^{3})dx-\frac{1}{p}\int_{\mathbb{R}%
^{3}}(\left\vert u\right\vert ^{p}+\left\vert v\right\vert ^{p})dx}{\frac{2}{%
p}\int_{\mathbb{R}^{3}}\left\vert u\right\vert ^{\frac{^{p}}{2}}\left\vert
v\right\vert ^{\frac{p}{2}}dx} \\
&\geq &\frac{\left( \frac{d_{\theta ,k}}{2p}-\frac{1}{p}\right) \int_{%
\mathbb{R}^{3}}(\left\vert u\right\vert ^{p}+\left\vert v\right\vert ^{p})dx%
}{\frac{1}{p}\int_{\mathbb{R}^{3}}(\left\vert u\right\vert ^{p}+\left\vert
v\right\vert ^{p})dx}=\frac{d_{\theta ,k}}{2}-1,
\end{eqnarray*}%
where $d_{\theta ,k}:=p\left( \frac{\theta }{3-p}\right) ^{3-p}\left( \frac{k%
}{p-2}\right) ^{p-2}.$ This shows that%
\begin{eqnarray*}
\Lambda \left( \theta ,k\right)  &:&=\inf_{u\in \mathbf{H}_{r}\setminus
\left\{ \left( 0,0\right) \right\} }\frac{\frac{1}{2}\left\Vert \left(
u,v\right) \right\Vert _{\theta }^{2}+\frac{k}{4}\int_{\mathbb{R}^{3}}\phi
_{k,\left( u,v\right) }\left( u^{2}+v^{2}\right) dx-\frac{1}{p}\int_{\mathbb{%
R}^{3}}(\left\vert u\right\vert ^{p}+\left\vert v\right\vert ^{p})dx}{\frac{2%
}{p}\int_{\mathbb{R}^{3}}\left\vert u\right\vert ^{\frac{^{p}}{2}}\left\vert
v\right\vert ^{\frac{p}{2}}dx} \\
&\geq &\frac{p}{2}\left( \frac{\theta }{3-p}\right) ^{3-p}\left( \frac{k}{p-2%
}\right) ^{p-2}-1.
\end{eqnarray*}%
$\left( ii\right) $ Let $\left\{ \left( u_{n},v_{n}\right) \right\} \subset
\mathbf{H}_{r}\setminus \{\left( 0,0\right) \}$ be a minimum sequence of (%
\ref{2-17}). First of all, we claim that $\{\left( u_{n},v_{n}\right) \}$ is
bounded in $H_{r}$. Suppose on the contrary. Then $\left\Vert
(u_{n},v_{n})\right\Vert _{\theta }\rightarrow \infty $ as $n\rightarrow
\infty $. Since $\Lambda \left( \lambda ,k\right) >-\infty $ and%
\begin{equation*}
\frac{\frac{1}{2}\left\Vert \left( u,v\right) \right\Vert _{\theta }^{2}+%
\frac{k}{4}\int_{\mathbb{R}^{3}}\phi _{k,\left( u,v\right) }\left(
u^{2}+v^{2}\right) dx-\frac{1}{p}\int_{\mathbb{R}^{3}}(\left\vert
u\right\vert ^{p}+\left\vert v\right\vert ^{p})dx}{\frac{2}{p}\int_{\mathbb{R%
}^{3}}\left\vert u\right\vert ^{\frac{^{p}}{2}}\left\vert v\right\vert ^{%
\frac{p}{2}}dx}=\Lambda \left( \lambda ,k\right) +o\left( 1\right) ,
\end{equation*}%
there exists $C_{1}>\max \left\{ -1,\Lambda \left( \theta ,k\right) \right\}
$ such that%
\begin{eqnarray}
\widetilde{J}\left( u_{n},v_{n}\right) := &&\frac{1}{2}\left\Vert \left(
u_{n},v_{n}\right) \right\Vert _{\theta }^{2}+\frac{k}{4}\int_{\mathbb{R}%
^{3}}\phi _{k,\left( u_{n},v_{n}\right) }\left( u_{n}^{2}+v_{n}^{2}\right)
dx-\frac{1}{p}\int_{\mathbb{R}^{3}}(\left\vert u_{n}\right\vert
^{p}+\left\vert v_{n}\right\vert ^{p})dx  \notag \\
&&-\frac{2C_{1}}{p}\int_{\mathbb{R}^{3}}\left\vert u_{n}\right\vert ^{\frac{%
^{p}}{2}}\left\vert v_{n}\right\vert ^{\frac{p}{2}}dx<0  \label{A-4}
\end{eqnarray}%
for $n$ sufficiently large. Similar to (\ref{3-4})--(\ref{3-5}), we also have%
\begin{equation}
\frac{k}{\sqrt{8}}\int_{\mathbb{R}^{3}}(\left\vert u\right\vert
^{3}+v^{2}\left\vert u\right\vert )dx\leq \frac{1}{4}\int_{\mathbb{R}%
^{3}}\left\vert \nabla u\right\vert ^{2}dx+\frac{k}{8}\int_{\mathbb{R}%
^{3}}\phi _{k,\left( u,v\right) }\left( u^{2}+v^{2}\right) dx  \label{A-5}
\end{equation}%
and%
\begin{equation}
\frac{k}{\sqrt{8}}\int_{\mathbb{R}^{3}}(u^{2}\left\vert v\right\vert
+\left\vert v\right\vert ^{3})dx\leq \frac{1}{4}\int_{\mathbb{R}%
^{3}}\left\vert \nabla v\right\vert ^{2}dx+\frac{k}{8}\int_{\mathbb{R}%
^{3}}\phi _{k,\left( u,v\right) }\left( u^{2}+v^{2}\right) dx  \label{A-6}
\end{equation}%
for all $\left( u,v\right) \in \mathbf{H}_{r}.$ Then it follows from (\ref%
{A-4})--(\ref{A-6}) that%
\begin{equation*}
\widetilde{J}\left( u_{n},v_{n}\right) \geq \frac{1}{4}\left\Vert \left(
u_{n},v_{n}\right) \right\Vert _{\theta }^{2}+\frac{k}{8}\int_{\mathbb{R}%
^{3}}\phi _{k,\left( u_{n},v_{n}\right) }\left( u_{n}^{2}+v_{n}^{2}\right)
dx+\int_{\mathbb{R}^{3}}(f_{\theta ,k}\left( u_{n}\right) +f_{\theta
,k}\left( v_{n}\right) )dx,
\end{equation*}%
where $f_{\theta ,k}\left( s\right) :=\frac{\theta }{4}s^{2}+\frac{k}{\sqrt{8%
}}s^{3}-\frac{1+C_{1}}{p}s^{p}$ for $s>0.$ It is clear that $f_{\theta ,k}$
is positive for $s\rightarrow 0^{+}$ or $s\rightarrow \infty ,$ since $2<p<3$
and $\theta ,k>0.$ Define%
\begin{equation*}
m_{\theta ,k}:=\inf_{s>0}f_{\theta ,k}(s).
\end{equation*}%
If $m_{\theta ,k}\geq 0,$ then by (\ref{A-4}) we have
\begin{equation*}
0\geq \widetilde{J}\left( u_{n},v_{n}\right) \geq \frac{1}{4}\left\Vert
\left( u_{n},v_{n}\right) \right\Vert _{\theta }^{2}+\frac{k}{8}\int_{%
\mathbb{R}^{3}}\phi _{k,\left( u_{n},v_{n}\right) }(u_{n}^{2}+v_{n}^{2})dx>0,
\end{equation*}%
which is a contradiction. We now assume that $m_{\theta ,k}<0.$ Then the set
$\left\{ s>0:f_{\theta ,k}\left( s\right) <0\right\} $ is an open interval $%
\left( s_{1},s_{2}\right) $ with $s_{1}>0.$ Note that constants $%
s_{1},s_{2},m_{\theta ,k}$ depend on $p,\theta ,k$ and $C_{1}$. Thus, there
holds
\begin{eqnarray}
\widetilde{J}\left( u_{n},v_{n}\right)  &\geq &\frac{1}{4}\left\Vert \left(
u_{n},v_{n}\right) \right\Vert _{\theta }^{2}+\frac{k}{8}\int_{\mathbb{R}%
^{3}}\phi _{k,\left( u_{n},v_{n}\right) }(u_{n}^{2}+v_{n}^{2})dx+\int_{%
\mathbb{R}^{3}}(f_{\theta ,k}\left( u_{n}\right) +f_{\theta ,k}\left(
v_{n}\right) )dx  \notag \\
&\geq &\frac{1}{4}\left\Vert \left( u_{n},v_{n}\right) \right\Vert _{\theta
}^{2}+\frac{k}{8}\int_{\mathbb{R}^{3}}\phi _{k,\left( u_{n},v_{n}\right)
}(u_{n}^{2}+v_{n}^{2})dx+\int_{D_{n}^{\left( 1\right) }}f_{\beta }\left(
u_{n}\right) dx+\int_{D_{n}^{\left( 2\right) }}f_{\beta }\left( v_{n}\right)
dx  \notag \\
&\geq &\frac{1}{4}\left\Vert \left( u_{n},v_{n}\right) \right\Vert _{\theta
}^{2}+\frac{k}{8}\int_{\mathbb{R}^{3}}\phi _{k,\left( u_{n},v_{n}\right)
}(u_{n}^{2}+v_{n}^{2})dx-\left\vert m_{\theta ,k}\right\vert \left(
\left\vert D_{n}^{\left( 1\right) }\right\vert +\left\vert D_{n}^{\left(
2\right) }\right\vert \right) ,  \label{A-7}
\end{eqnarray}%
where the sets $D_{n}^{\left( 1\right) }:=\left\{ x\in \mathbb{R}%
^{3}:u_{n}\left( x\right) \in \left( s_{1},s_{2}\right) \right\} $ and $%
D_{n}^{\left( 2\right) }:=\left\{ x\in \mathbb{R}^{3}:v_{n}\left( x\right)
\in \left( s_{1},s_{2}\right) \right\} .$ It follows from (\ref{A-4}) and (%
\ref{A-7}) that
\begin{equation}
\left\vert m_{\theta ,k}\right\vert \left( \left\vert D_{n}^{\left( 1\right)
}\right\vert +\left\vert D_{n}^{\left( 2\right) }\right\vert \right) >\frac{1%
}{4}\left\Vert \left( u_{n},v_{n}\right) \right\Vert _{\theta }^{2},
\label{A-12}
\end{equation}%
which implies that $\left\vert D_{n}^{\left( 1\right) }\right\vert
+\left\vert D_{n}^{\left( 2\right) }\right\vert \rightarrow \infty $ as $%
n\rightarrow \infty ,$ since $\left\Vert (u_{n},v_{n})\right\Vert _{\theta
}\rightarrow \infty $ as $n\rightarrow \infty .$ Moreover, since $%
D_{n}^{\left( 1\right) }$ and $D_{n}^{\left( 2\right) }$ are spherically
symmetric, we define $\rho _{n}^{\left( i\right) }:=\sup \left\{ \left\vert
x\right\vert :x\in D_{n}^{\left( i\right) }\right\} $ for $i=1,2.$ Then we
can take $x^{\left( i\right) }\in \mathbb{R}^{3}(i=1,2)$ such that $%
\left\vert x^{\left( i\right) }\right\vert =\rho _{n}^{\left( i\right) }.$
Clearly, $u_{n}\left( x^{\left( 1\right) }\right) =v_{n}\left( x^{\left(
2\right) }\right) =s_{1}>0.$ Recall the following Strauss's inequality by
Strauss \cite{S}:%
\begin{equation}
\left\vert z\left( x\right) \right\vert \leq c_{0}\left\vert x\right\vert
^{-1}\left\Vert z\right\Vert _{H^{1}}\text{ for all }z\in H_{r}^{1}(\mathbb{R%
}^{3})  \label{A-13}
\end{equation}%
for some $c_{0}>0.$ Thus, by (\ref{A-12})--(\ref{A-13}), we have%
\begin{equation*}
0<s_{1}=u_{n}\left( x^{\left( 1\right) }\right) <c_{0}\left( \rho
_{n}^{\left( 1\right) }\right) ^{-1}\left\Vert u_{n}\right\Vert _{H^{1}}\leq
2c_{0}\left\vert m_{\theta ,k}\right\vert ^{1/2}\left( \rho _{n}^{\left(
1\right) }\right) ^{-1}\left( \left\vert D_{n}^{\left( 1\right) }\right\vert
+\left\vert D_{n}^{\left( 2\right) }\right\vert \right) ^{1/2}
\end{equation*}%
and%
\begin{equation*}
0<s_{1}=v_{n}\left( x^{\left( 2\right) }\right) <c_{0}\left( \rho
_{n}^{\left( 2\right) }\right) ^{-1}\left\Vert v_{n}\right\Vert _{H^{1}}\leq
2c_{0}\left\vert m_{\theta ,k}\right\vert ^{1/2}\left( \rho _{n}^{\left(
2\right) }\right) ^{-1}\left( \left\vert D_{n}^{\left( 1\right) }\right\vert
+\left\vert D_{n}^{\left( 2\right) }\right\vert \right) ^{1/2}.
\end{equation*}%
These imply that%
\begin{equation}
c_{i}\rho _{n}^{\left( i\right) }\leq \left( \left\vert D_{n}^{\left(
1\right) }\right\vert +\left\vert D_{n}^{\left( 2\right) }\right\vert
\right) ^{1/2}\text{ for some }c_{i}>0\text{ and }i=1,2.  \label{A-14}
\end{equation}%
On the other hand, since $\widetilde{J}\left( u_{n},v_{n}\right) \leq 0,$ we
have
\begin{eqnarray*}
&&\frac{8}{k^{2}}\left\vert m_{\beta }\right\vert \left( \left\vert
D_{n}^{\left( 1\right) }\right\vert +\left\vert D_{n}^{\left( 2\right)
}\right\vert \right) \geq \int_{\mathbb{R}^{3}}\phi _{(u_{n},v_{n})}\left(
u_{n}^{2}+v_{n}^{2}\right) dx \\
&=&\int_{\mathbb{R}^{3}}\int_{\mathbb{R}^{3}}\frac{u_{n}^{2}(x)u_{n}^{2}(y)}{%
|x-y|}dxdy+\int_{\mathbb{R}^{3}}\int_{\mathbb{R}^{3}}\frac{%
v_{n}^{2}(x)v_{n}^{2}(y)}{|x-y|}dxdy+2\int_{\mathbb{R}^{3}}\int_{\mathbb{R}%
^{3}}\frac{u_{n}^{2}(x)v_{n}^{2}(y)}{|x-y|}dxdy \\
&\geq &\int_{D_{n}^{\left( 1\right) }}\int_{D_{n}^{\left( 1\right) }}\frac{%
u_{n}^{2}(x)u_{n}^{2}(y)}{|x-y|}dxdy+\int_{D_{n}^{\left( 2\right)
}}\int_{D_{n}^{\left( 2\right) }}\frac{v_{n}^{2}(x)v_{n}^{2}(y)}{|x-y|}%
dxdy+2\int_{D_{n}^{\left( 2\right) }}v_{n}^{2}(y)\left( \int_{D_{n}^{\left(
1\right) }}\frac{u_{n}^{2}(x)}{|x-y|}dx\right) dy \\
&\geq &s_{1}^{4}\left( \frac{\left\vert D_{n}^{\left( 1\right) }\right\vert
^{2}}{2\rho _{n}^{\left( 1\right) }}+\frac{\left\vert D_{n}^{\left( 2\right)
}\right\vert ^{2}}{2\rho _{n}^{\left( 2\right) }}\right)
+2\int_{D_{n}^{\left( 2\right) }}v_{n}^{2}(y)\left( \int_{D_{n}^{\left(
1\right) }}\frac{u_{n}^{2}(x)}{|x|+\left\vert y\right\vert }dx\right) dy \\
&\geq &s_{1}^{4}\left( \frac{\left\vert D_{n}^{\left( 1\right) }\right\vert
^{2}}{2\rho _{n}^{\left( 1\right) }}+\frac{\left\vert D_{n}^{\left( 2\right)
}\right\vert ^{2}}{2\rho _{n}^{\left( 2\right) }}\right) +\frac{%
2s_{1}^{4}\left\vert D_{n}^{\left( 1\right) }\right\vert \left\vert
D_{n}^{\left( 2\right) }\right\vert }{\rho _{n}^{\left( 1\right) }+\rho
_{n}^{\left( 2\right) }} \\
&\geq &s_{1}^{4}\left( \frac{\left\vert D_{n}^{\left( 1\right) }\right\vert
^{2}}{2\rho _{n}^{\left( 1\right) }}+\frac{\left\vert D_{n}^{\left( 2\right)
}\right\vert ^{2}}{2\rho _{n}^{\left( 2\right) }}+\frac{2\left\vert
D_{n}^{\left( 1\right) }\right\vert \left\vert D_{n}^{\left( 2\right)
}\right\vert }{\rho _{n}^{\left( 1\right) }+\rho _{n}^{\left( 2\right) }}%
\right) ,
\end{eqnarray*}%
and together with (\ref{A-14}), we further have%
\begin{eqnarray*}
\frac{8}{k^{2}s_{1}^{4}}\left\vert m_{\theta ,k}\right\vert \left(
\left\vert D_{n}^{\left( 1\right) }\right\vert +\left\vert D_{n}^{\left(
2\right) }\right\vert \right)  &\geq &\frac{c_{1}\left\vert D_{n}^{\left(
1\right) }\right\vert ^{2}}{2\left( \left\vert D_{n}^{\left( 1\right)
}\right\vert +\left\vert D_{n}^{\left( 2\right) }\right\vert \right) ^{1/2}}+%
\frac{c_{2}\left\vert D_{n}^{\left( 2\right) }\right\vert ^{2}}{2\left(
\left\vert D_{n}^{\left( 1\right) }\right\vert +\left\vert D_{n}^{\left(
2\right) }\right\vert \right) ^{1/2}} \\
&&+\frac{2\left\vert D_{n}^{\left( 1\right) }\right\vert \left\vert
D_{n}^{\left( 2\right) }\right\vert }{(c_{1}^{-1}+c_{2}^{-1})\left(
\left\vert D_{n}^{\left( 1\right) }\right\vert +\left\vert D_{n}^{\left(
2\right) }\right\vert \right) ^{1/2}} \\
&\geq &\min \left\{ \frac{c_{1}}{2},\frac{c_{2}}{2}%
,(c_{1}^{-1}+c_{2}^{-1})^{-1}\right\} \left( \left\vert D_{n}^{\left(
1\right) }\right\vert +\left\vert D_{n}^{\left( 2\right) }\right\vert
\right) ^{3/2},
\end{eqnarray*}%
which implies that for all $n,$
\begin{equation*}
\left\vert D_{n}^{\left( 1\right) }\right\vert +\left\vert D_{n}^{\left(
2\right) }\right\vert \leq M\text{ for some }M>0.
\end{equation*}%
This contradicts with $\left\vert D_{n}^{\left( 1\right) }\right\vert
+\left\vert D_{n}^{\left( 2\right) }\right\vert \rightarrow \infty $ as $%
n\rightarrow \infty .$ Hence, we conclude that $\left\{ \left(
u_{n},v_{n}\right) \right\} $ is bounded in $\mathbf{H}_{r}.$

Assume that $\left( u_{n},v_{n}\right) \rightharpoonup \left(
u_{0},v_{0}\right) $ in $\mathbf{H}_{r}.$ Next, we prove that $\left(
u_{n},v_{n}\right) \rightarrow \left( u_{0},v_{0}\right) $ strongly in $%
\mathbf{H}_{r}.$ Suppose on contrary. Then there holds%
\begin{equation*}
\left\Vert \left( u_{0},v_{0}\right) \right\Vert _{\theta }^{2}<\liminf
\left\Vert \left( u_{n},v_{n}\right) \right\Vert _{\theta }^{2},
\end{equation*}%
Since $\mathbf{H}_{r}\hookrightarrow L^{r}(\mathbb{R}^{3})\times L^{r}(%
\mathbb{R}^{3})$ is compact for $2<r<6$ (see \cite{S}), we have%
\begin{equation*}
\int_{\mathbb{R}^{3}}(\left\vert u_{n}\right\vert ^{p}+\left\vert
v_{n}\right\vert ^{p})dx\rightarrow \int_{\mathbb{R}^{3}}(\left\vert
u_{0}\right\vert ^{p}+\left\vert v_{0}\right\vert ^{p})dx
\end{equation*}%
and%
\begin{equation*}
\int_{\mathbb{R}^{3}}\left\vert u_{n}\right\vert ^{\frac{^{p}}{2}}\left\vert
v_{n}\right\vert ^{\frac{p}{2}}dx\rightarrow \int_{\mathbb{R}^{3}}\left\vert
u_{0}\right\vert ^{\frac{^{p}}{2}}\left\vert v_{0}\right\vert ^{\frac{p}{2}%
}dx.
\end{equation*}%
Moreover, it follows from Ruiz \cite[Lemma 2.1]{R1} that
\begin{equation*}
\int_{\mathbb{R}^{3}}\phi _{(u_{n},v_{n})}\left( u_{n}^{2}+v_{n}^{2}\right)
dx\rightarrow \int_{\mathbb{R}^{3}}\phi _{(u_{0},v_{0})}\left(
u_{0}^{2}+v_{0}^{2}\right) dx.
\end{equation*}%
These imply that%
\begin{equation*}
\frac{\frac{1}{2}\left\Vert \left( u_{0},v_{0}\right) \right\Vert _{\theta
}^{2}+\frac{k}{4}\int_{\mathbb{R}^{3}}\phi _{k,\left( u_{0},v_{0}\right)
}\left( u_{0}^{2}+v_{0}^{2}\right) dx-\frac{1}{p}\int_{\mathbb{R}%
^{3}}(\left\vert u_{0}\right\vert ^{p}+\left\vert v_{0}\right\vert ^{p})dx}{%
\frac{2}{p}\int_{\mathbb{R}^{3}}\left\vert u_{0}\right\vert ^{\frac{^{p}}{2}%
}\left\vert v_{0}\right\vert ^{\frac{p}{2}}dx}<\Lambda \left( \theta
,k\right) ,
\end{equation*}%
which is a contradiction. Hence, we conclude that $\left( u_{n},v_{n}\right)
\rightarrow \left( u_{0},v_{0}\right) $ strongly in $\mathbf{H}_{r}$ and $%
\left( u_{0},v_{0}\right) \in \mathbf{H}_{r}\setminus \left\{ \left(
0,0\right) \right\} .$ Therefore, $\Lambda \left( \theta ,k\right) $ is
achieved. This completes the proof.
\end{proof}

\section*{Acknowledgments}

J. Sun was supported by the National Natural Science Foundation of China
(Grant No. 12371174) and Shandong Provincial Natural Science Foundation
(Grant No. ZR2020JQ01). T.F. Wu was supported by the National Science and
Technology Council, Taiwan (Grant No. 112-2115-M-390-001-MY3).

\section*{Data availability}
Data sharing not applicable to this article as no datasets were generated or analysed during the current study.

\section*{Competing Interests}
On behalf of all authors, the corresponding author Tsung-fang Wu states that there is no conflict of interest.

\end{document}